\newcommand{\plabel}[2]{L_{#2}(#1)}
\newcommand{\parrays}[1]{\mathcal A_{#1}}
\newcommand{\wt}{\mathbf{wt}}
\newcommand{\ch}{\mathrm{char}}
\newcommand{\asc}{\mathrm{asc}}
\newcommand{\df}[1]{\mathrm{DF}_{#1}}
\newcommand{\dfa}[1]{\mathrm{DFA}_{#1}}
\newcommand{\arrayify}{\mathrm{array}}
\newcommand{\inc}{\operatorname{inc}}
\definecolor{magenta2}{HTML}{D81B60}
\definecolor{blue2}{HTML}{1E88E5}
\newlength\cellsize \setlength\cellsize{8\unitlength}
\newcommand\boxify[1]{\def\thearg{#1}\def\nothing{}%
	\ifx\thearg\nothing\vrule width0pt height\cellsize depth0pt%
	\else\hbox to 0pt{\usebox2\hss}\fi%
	\vbox to \cellsize{\vss\hbox to \cellsize{\hss$_{#1}$\hss}\vss}}
\newcommand{\circify}[1]{\def\thearg{#1}\def\nothing{}%
	\ifx\thearg\nothing\vrule width0pt height\cellsize depth0pt%
	\else\hbox to 0pt{\usebox3\hss}\fi%
	\vbox to \cellsize{\vss\hbox to \cellsize{\hss$_{#1}$\hss}\vss}}
\newcommand\nullify[1]{\def\thearg{#1}\def\nothing{}%
	\ifx\thearg\nothing\vrule width0pt height\cellsize depth0pt%
	\else\hbox to 0pt{\hss}\fi%
	\vbox to \cellsize{\vss\hbox to \cellsize{\hss$_{#1}$\hss}\vss}}
\newcommand\tableau[1]{\vtop{\let\\=\cr
		\setlength\baselineskip{-8000pt}
		\setlength\lineskiplimit{8000pt}
		\setlength\lineskip{0pt}
		\halign{&\boxify{##}\cr#1\crcr}}}
\newcommand\cirtab[1]{\vline\vtop{\let\\=\cr
		\setlength\baselineskip{-8000pt}
		\setlength\lineskiplimit{8000pt}
		\setlength\lineskip{0pt}
		\halign{&\circify{##}\cr#1\crcr}}}
\newcommand\nulltab[1]{\vtop{\let\\=\cr
		\setlength\baselineskip{-8000pt}
		\setlength\lineskiplimit{8000pt}
		\setlength\lineskip{0pt}
		\halign{&\nullify{##}\cr#1\crcr}}}
\def\N{{\mathbb{N}}}
\def\R{{\mathbb{R}}}
\def\Z{{\mathbb{Z}}}
\def\D{{\mathbb{D}}}
\newtheorem*{claim*}{Claim}
\newtheorem*{definition*}{Definition}
\newtheorem*{proposition*}{Proposition}
\newtheorem*{lemma*}{Lemma}
\newtheorem*{theorem*}{Theorem}
\newtheorem*{corollary*}{Corollary}
\newtheorem{theorem}{Theorem}[section]
\newtheorem{lemma}[theorem]{Lemma}
\newtheorem{corollary}[theorem]{Corollary}
\newtheorem{definition}[theorem]{Definition}
\newtheorem{remark}[theorem]{Remark}
\newtheorem{proposition}[theorem]{Proposition}
\title{A Crystal Analysis of $P$-Arrays}
\author{Henry Ehrhard}
\date{}
\begin{document}
	
\begin{abstract}
	Gasharov introduced the combinatorial objects known as $P$-arrays to prove $s$-positivity for the chromatic symmetric functions of incomparability graphs of (3+1)-free posets. We define a crystal, a directed colored graph with some additional axioms, on the set of $P$-arrays. The components of the crystal have $s$-positive characters, thereby refining the $s$-positivity theorems of Gasharov, as well as Shareshian and Wachs. The crystal hints at a possible generalization of the Robinson-Schensted correspondence applied to $P$-arrays.
\end{abstract}
\maketitle

\section{Introduction}
The chromatic symmetric function generalizes the chromatic polynomial of a graph \cite{Sta95}. The Stanley-Stembridge Conjecture \cite[Conjecture~5.5]{SS93} states that the chromatic symmetric function of the incomparability graph of a (3+1)-free poset is a positive sum of elementary symmetric functions. The conjecture has long motivated research into the chromatic symmetric functions of these graphs. 

In particular, Gasharov showed the weaker result that these symmetric functions are a positive sum of Schur functions, or are ``$s$-positive'' \cite{Gas96}. This was accomplished by reinterpreting the chromatic symmetric functions as generating functions for $P$-arrays, a combinatorial object corresponding to proper colorings. Moreover, the coefficients of the expansion into Schur functions can be stated in terms of $P$-arrays.

Stanley expressed the desire for a direct bijective proof of Gasharov's result, which would generalize the Robinson-Schensted correspondence \cite{Sta98}. A better understanding of Gasharov's result could help in addressing the natural suspicion that it holds for the larger class of claw-free graphs \cite[Conjecture~1.4]{Sta98}. Some thought has gone towards finding such a Robinson-Schensted correspondence for $P$-arrays \cite{Mag93},\cite{SWW97},\cite{Cho99},\cite{KP21} but the problem in its full generality remains open. Among the immediate consequences of this hypothetical correspondence would be a way to interpret each Schur function in the chromatic symmetric function as an explicit generating function for $P$-arrays in its own right. 

In this paper we similarly aim to refine the chromatic symmetric function as a sum of smaller $s$-positive generating functions. Crystals where introduced by Kashiwara \cite{Kas90} and have often served as a paradigm for interpreting and achieving $s$-positivity results, in part due to their connection with the representation theory of Lie groups. We thus construct a crystal on the set of $P$-arrays. The crystal determines a sign-reversing involution, similar to the one in Gasharov's proof, which we use to show that the crystal components have $s$-positive character. This refines the $s$-positivity theorem of Gasharov.

Guay-Paquet reduced the Stanley-Stembridge Conjecture to the case of unit interval orders \cite{Gua13}. More recently, Shareshian and Wachs' defined a quasisymmetric generalization of the chromatic symmetric function which, when applied to incomparability graphs of natural unit interval orders, suggests a strengthened version of the conjecture and yields an analogously refined $s$-positivity theorem \cite{SW16}. We show that our crystal is also applicable to this quasisymmetric setting, further refining the $s$-positivity theorem. Shareshian and Wachs use an involution similar to Gasharov's, but more efficient in some sense. By the same measure, our involution is optimal.

Our goals are similar to \cite{KP21} which, subject to additional poset avoidance conditions that are conjectured to be unnecessary, refines $s$-positivity of chromatic quasisymmetric functions for natural unit interval orders. The paper also presents a Robinson-Schensted correspondence in this case. In comparison, our work more generally deals with underlying (3+1)-free posets without additional constraints.

We begin in Section \ref{sec: chrom} by reviewing the necessary context around $P$-arrays and the chromatic symmetric function. In Section \ref{sec: crystals} we will explain what we mean by a crystal and discuss the diagram crystal defined in \cite{Ass-KC}. In Section \ref{sec: align} we will define the $r$ alignment of a $P$-array which is a sort of ``pairing rule'' that will allows us to define the crystal on $P$-arrays in Section \ref{sec: operators}. The $s$-positivity property of this crystal will be proved in Section \ref{sec: spos}, and its applicability to the chromatic quasisymmetric function for natural unit interval orders demonstrated in Section \ref{sec: nuio}. Finally, in Section \ref{sec: two-row} we will attempt to push our construction further by writing individual Schur functions in the expansion as generating functions when they correspond to partitions of length 1 or 2, even when our $P$-array crystal does not tell us how to do so. 

This last development relies heavily on the diagram crystal in \cite{Ass-KC}, and the proof is fundamentally bijective which means it can be viewed as a sort of partial Robinson-Schensted correspondence.  We are interested to see if the bijection can be extended to partitions of greater lengths. There is also the question of whether a crystal like ours can be defined on the proper colorings of an arbitrary claw-free graph, which would prove $s$-positivity of their chromatic symmetric functions.

\section{Chromatic Symmetric Functions}\label{sec: chrom}
A \textbf{proper coloring} of a graph $G=(V,E)$ is a map $\kappa:V\to \Z_+$ such that $\kappa(v)\ne\kappa(w)$ whenever $(v,w)\in E$. An \textbf{independent} subset of $V$ contains no two elements that form an edge. Proper colorings $\kappa:V\to\Z_+$ are then equivalently defined by requiring that $\kappa^{-1}(i)$ is an independent subset for all $i\in\Z_+$.
\begin{definition}\cite{Sta95}
	The \textbf{chromatic symmetric function} of a finite graph $G$ is given by \[X_G(x)=\sum_{\kappa} \prod_{v\in V} x_{\kappa(v)}\] where the sum is over all proper colorings of $G$.
\end{definition}
It is evident that these functions are indeed symmetric.

Given a poset $(P,\le_P)$, its \textbf{incomparability graph}, denoted $\inc(P)$, has vertex set $P$ and edges $(u,v)$ whenever $u$ and $v$ are incomparable in $P$, henceforth denoted by $u\parallel v$. See Fig. \ref{fig:poset} for an example of a poset and its incomparability graph. Notice that independent subsets of $\inc(P)$ are precisely the chains in $P$. That is, there is a canonical ordering on any independent subset. This observation motivates the following definition of objects that correspond to proper colorings of the incomparability graph.

\begin{figure}[ht]
	\begin{tikzpicture}
		\node (a) at (0,0) {$a$};
		\node (b) at (0,1) {$b$};
		\node (c) at (0,2) {$c$};
		\node (d) at (0,3) {$d$};
		\node (e) at (0,4) {$e$};
		\node (f) at (0,5) {$f$};
		\node (g) at (1,0) {$g$};
		\node (h) at (1,2) {$h$};
		\draw (a) to (b);
		\draw (b) to (c);
		\draw (c) to (d);
		\draw (d) to (e);
		\draw (e) to (f);
		\draw (e) to (f);
		\draw (g) to (b);
		\draw (h) to (d);
		\draw (h) to (a);
		\draw (h) to (g);
	\end{tikzpicture}
	\hspace{100pt}
	\begin{tikzpicture}
		\node (a) at (0,0) {$a$};
		\node (b) at (0,1) {$b$};
		\node (c) at (0,2) {$c$};
		\node (d) at (0,3) {$d$};
		\node (e) at (0,4) {$e$};
		\node (f) at (0,5) {$f$};
		\node (g) at (1,0) {$g$};
		\node (h) at (1,2) {$h$};
		\draw (c) to (h);
		\draw (b) to (h);
		\draw (a) to (g);
	\end{tikzpicture}

	\caption{\label{fig:poset} The Hasse diagram (left) and incomparability graph (right) of a (3+1)-free poset.}
\end{figure}
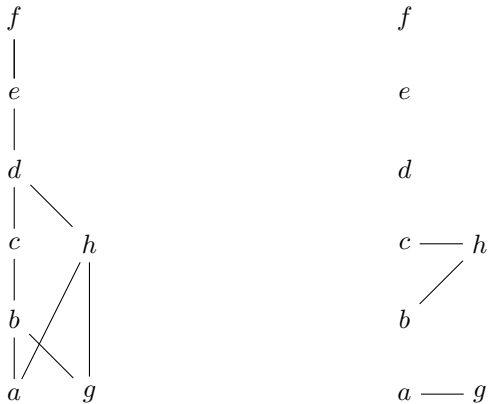

\begin{definition}\cite{Gas96}
	Let $(P,\le_P)$ be a finite poset. A \textbf{$P$-array} is an indexing $\{A_{i,j}\}$ of the elements of $P$ such that if $A_{i,j}$ is defined with $j>1$ then $A_{i,j-1}$ is also defined and $A_{i,j-1}<_P A_{i,j}$. We let $\parrays{P}$ denote the set of $P$-arrays.
\end{definition}
We visualize a $P$-array $\{A_{i,j}\}$ as the placement of the elements of $P$ on a grid indexed by matrix convention, where $A_{i,j}$ is placed in position $(i,j)$. Fig. \ref{fig:Parray} shows two $P$-arrays for the poset in Fig. \ref{fig:poset}.

\begin{figure}[ht]
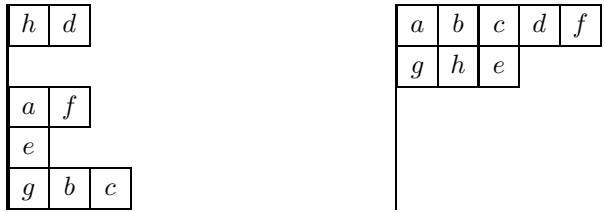

	\centering
	\begin{displaymath}
		\vline
		\ytableausetup{centertableaux}\begin{ytableau} 
			h&d \\ 
			\none\\
			a&f\\
			e\\
			g&b&c\\
		\end{ytableau} \hspace{100pt}
		\vline
		\ytableausetup{centertableaux}\begin{ytableau} 
			a&b&c&d&f\\ 
			g&h&e\\
			\none\\
			\none\\
			\none\\
		\end{ytableau}
	\end{displaymath}
	
	\caption{\label{fig:Parray} Two $P$-arrays with the one on the right being a $P$-tableau.}
\end{figure} 

Given a $P$-array $\{A_{i,j}\}$, the corresponding proper coloring of $\kappa:P\to \Z_+$ of $\inc(P)$ is defined by $\kappa(A_{i,j})=i$. Given a proper coloring $\kappa:P\to \Z_+$, the corresponding $P$-array is obtained by letting $A_{i,1},A_{i,2},\ldots,A_{i,k}$ be the elements of $\kappa^{-1}(i)$ in increasing order. The \textbf{weight} of a $P$-array $A$ is the weak composition $\wt(A)$ whose $i$th part is the number of elements in row $i$. If $\kappa$ is the corresponding proper coloring then $\wt(A)_i=\#\kappa^{-1}(i)$ so we realize that
\[
X_{\inc(P)}=\sum_{A\in\parrays P} \prod_{i\ge 1} x_i^{\wt(A)_i}.
\]

Developing these combinatorial objects further, we have the following definition.
\begin{definition}\cite{Gas96}
	Let $(P,\le_P)$ be a poset. A \textbf{$P$-tableau} is a $P$-array with the additional constraint that whenever $A_{i,j}$ is defined with $i>1$, then $A_{i-1,j}$ is also defined and $A_{i-1,j}\not>_P A_{i,j}$.
\end{definition}
The second $P$-array in Fig. \ref{fig:Parray} is a $P$-tableau. We see that the weight of a $P$-tableau is a partition in general, i.e. if $T$ is a $P$-tableau then $\wt(T)_i\ge \wt(T)_{i+1}$ for all $i\ge1$.

A poset is \textbf{($a$+$b$)-free}, for natural numbers $a$ and $b$, if it does not contain as an induced subposet the disjoint union of a $a$-chain with a $b$-chain. The poset in Fig. \ref{fig:poset} is (3+1)-free for instance.

Let $s_\lambda$ denote the Schur function indexed by a partition $\lambda$. We say a symmetric function is \textbf{$s$-positive} if it is a sum of Schur functions. A graph is said to be $s$-positive if its chromatic symmetric function is. The following theorem by Gasharov now reveals the motivation behind the definition of $P$-tableaux.
\begin{theorem}\cite{Gas96}\label{thm:gash}
	Let $(P,\le_P)$ be a (3+1)-free poset. Then $X_{\inc(P)}=\sum_\lambda c_\lambda s_\lambda$ where $c_\lambda$ is the number of $P$-tableaux of weight $\lambda$. In particular, $\inc(P)$ is $s$-positive.
\end{theorem}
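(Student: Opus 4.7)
The plan is to prove the identity $X_{\inc(P)} = \sum_\lambda c_\lambda s_\lambda$ by matching signed generating functions. I would begin by applying the Jacobi--Trudi determinantal formula
\[
s_\lambda = \sum_{\sigma \in S_k} \operatorname{sgn}(\sigma) \prod_{i=1}^k h_{\lambda_i + \sigma(i) - i},
\]
where $k = \ell(\lambda)$ and $h_m = 0$ for $m<0$, and then expand each $h_m$ combinatorially as a sum over weakly increasing sequences of positive integers of length $m$. This rewrites $\sum_\lambda c_\lambda s_\lambda$ as a signed generating function indexed by triples $(T, \sigma, (R_1, \ldots, R_k))$, where $T$ is a $P$-tableau of some shape $\lambda$, $\sigma \in S_k$, and each $R_i$ is a weakly increasing sequence of positive integers of length $\lambda_i + \sigma(i) - i$. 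Such a triple carries sign $\operatorname{sgn}(\sigma)$ and weight monomial $\prod_{j\ge 1} x_j^{m_j}$, where $m_j$ is the total multiplicity of $j$ across the $R_i$.

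Next I would attach to each triple a ``decorated chain configuration'' in $P$: the $k$ rows of $T$ are chains in $P$, and the entries of $R_i$ record the colors assigned to the elements of row $i$ (after an appropriate shift prescribed by $\sigma$ when $\sigma \ne e$). In the case $\sigma = e$, the triple defines a $P$-array of the correct weight precisely when each resulting color class is itself a chain of $P$. These ``good'' triples contribute $\sum_{A \in \parrays P}x^{\wt(A)} = X_{\inc(P)}$, so it remains to show that all other triples cancel in pairs.

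The crux of the argument -- and the main technical obstacle -- is the construction of a weight-preserving, sign-reversing involution $\iota$ on the complement of the good triples. Roughly, $\iota$ would scan a triple for the smallest defect (either an inversion of $\sigma$ or a color class that fails to be a chain in $P$), locate two rows $i < i'$ of $T$ witnessing the defect, and swap carefully chosen tails of these rows while composing $\sigma$ with the transposition $(i, i')$ and updating $R_i, R_{i'}$ so that the total weight is preserved. The (3+1)-free hypothesis is essential here: the only local obstruction to swapping the tails of two chains in $P$ while keeping both pieces chains is an induced (3+1) subposet, which the hypothesis forbids. Verifying that $\iota$ is canonically defined, involutive, and sign-reversing is where the bulk of the case analysis lives. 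Once this is done, the identity $X_{\inc(P)} = \sum_\lambda c_\lambda s_\lambda$ follows, and since each $c_\lambda$ counts a finite set of $P$-tableaux it is a non-negative integer, giving $s$-positivity.
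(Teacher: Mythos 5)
Your overall template (Jacobi--Trudi plus a weight-preserving, sign-reversing involution whose construction leans on the $(3+1)$-free hypothesis) is the right one, and it is the template used both by Gasharov and by this paper's proof of the refinement, Theorem \ref{thm: spos}. But the direction in which you expand creates a problem your set-up cannot absorb. The paper (following Gasharov) computes $c_\lambda=\langle X_{\inc(P)},s_\lambda\rangle$, which by Jacobi--Trudi and $\langle m_\mu,h_\nu\rangle=\delta_{\mu\nu}$ becomes a signed sum over $P$-arrays of weights $\pi(\lambda)$, as in equation \ref{eq: coeff}; the involution then cancels $P$-arrays pairwise, and its fixed points are by construction exactly the $P$-tableaux of weight $\lambda$. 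You instead expand $\sum_\lambda c_\lambda s_\lambda$ all the way down to monomials, index the terms by triples $(T,\sigma,R)$, and assert that the ``good'' triples (those with $\sigma=e$ whose color classes are chains) contribute exactly $X_{\inc(P)}$. That assertion requires the map from good triples to proper colorings to be a bijection, and it is not: take $P$ to be a two-element antichain $\{u,v\}$. Then $c_{(1,1)}=2$, and every proper coloring arises from two distinct good triples (the two $P$-tableaux of shape $(1,1)$, with the colors distributed accordingly), so the good triples contribute $2X_{\inc(P)}$ rather than $X_{\inc(P)}$. The identity $2s_{(1,1)}=2h_1^2-2h_2$ only balances because half of the good triples cancel against $\sigma\neq e$ terms; an involution defined ``on the complement of the good triples,'' as you propose, fixes every good triple and therefore proves the wrong identity. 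More broadly, producing a canonical bijection between surviving objects and proper colorings is essentially the Robinson--Schensted-type correspondence that the introduction notes is still open, which is precisely why Gasharov's argument runs in the other direction.

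Separately, even granting a corrected set-up, the involution itself --- which you rightly identify as the crux --- is left entirely unconstructed (``swap carefully chosen tails''). Specifying exactly which elements move, verifying that both resulting rows remain chains (this is where $(3+1)$-freeness actually enters; compare the checks (i)--(iv) in Proposition \ref{prop: lower_def}), and confirming that the map is a sign-reversing involution constitute essentially the entire content of the proof, so the proposal as written has not yet established the theorem.
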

The theorem and its proof don't give us an idea of which monomials ($P$-arrays) ``contribute'' to which Schur functions ($P$-tableaux) in any canonical way. Our primary goal in this paper is then to refine this result by developing a finer structure on $P$-arrays for a more localized explanation of $s$-positivity.

\section{Crystals}\label{sec: crystals}
A \textbf{crystal} is a colored directed graph consisting of the data $(\mathcal B, e_r, f_r, \wt)$. Here $\mathcal B$ is a vertex set. For all $r\ge 1$ the functions $e_r,f_r:\mathcal B\to\mathcal B\cup\{0\}$ called the \textbf {crystal raising and lowering operators} respectively, must satisfy $e_r(u)=v$ if and only if $f_r(v)=u$. The map $\wt:\mathcal B\to \Z^\ast$ is the \textbf{weight map} where $\Z^\ast$ is the set of $\Z$-valued sequences with finitely many nonzero entries. Letting $\alpha_r=(0,\ldots, 0,1,-1,0,\ldots)$ with the 1 as the $r$th entry, whenever $e_i(u)=v$ we also require that $\wt(v)=\wt(u)+\alpha_r$. The \textbf{character} of a crystal with vertex set $\mathcal B$ is
\[
\ch(\mathcal B)=\sum_{v\in\mathcal B} \prod_{i\ge 1} x_i^{\wt(v)_i}.
\]
Finally, a \textbf{highest weight element} of a crystal with vertex set $\mathcal B$ is some $v\in\mathcal B$ such that $e_r(v)=0$ for all $r\ge 1$. A more comprehensive description of crystals can be found in \cite{BS17}.

Be warned that we will overload the symbols $e_r,f_r$ and $\wt$ to be used on multiple vertex sets. However, there should be no ambiguity as it will be clear to which vertex set the maps are being applied.

In the following sections we will define a crystal $(\parrays P, e_r, f_r, \wt)$ for any (3+1)-free poset $(P,\le_P)$, with $\wt:\parrays{ P}\to \Z^\ast$ as already defined. We see that $\ch(\parrays P)$ coincides with $X_{\inc(P)}$ and is therefore $s$-positive by Theorem \ref{thm:gash}. Showing that the character of any connected component of the crystal is $s$-positive therefore generalizes this result, which is exactly what we intend to do. It will take us until Section \ref{sec: operators} to define the operators $e_r$ and $f_r$ for this crystal, and until Section \ref{sec: spos} to prove the refined $s$-positivity result.

\subsection{The Diagram Crystal}

For the remainder of this section we will discuss Assaf's diagram crystal \cite{Ass-KC}. This is not a prerequisite for understanding anything up through Section \ref{sec: nuio}. However, it may be a helpful example of a crystal with nice properties, and it is necessary for Section \ref{sec: two-row} where we partially succeed in pushing our refinement of Theorem \ref{thm:gash} even further.

By a \textbf{diagram} we mean a finite subset of $\Z_+\times\Z_+$ which we will interpret as positions in a grid using matrix convention. The elements of a diagram are referred to as its \textbf{cells}. We let $\mathcal D$ denote the set of diagrams. The weight map on $\mathcal D$ is defined so that $\wt(D)_i$ is the number of cells in row $i$ of $D$.

\begin{definition}\cite{Ass-KC}
	Let $D$ be a diagram and $r\in\Z_+$. The set of \textbf{$r$-pairs} of $D$ is a set of disjoint pairs of cells between rows $r$ and $r+1$ defined iteratively as follows. We say that two unpaired cells $x$ and $y$ in rows $r$ and $r+1$ respectively with $x$ weakly left of $y$ form an $r$-pair whenever every other cell in rows $r$ and $r+1$ in a column weakly between $x$ and $y$ is already part of an $r$-pair.
\end{definition}

\begin{definition}\cite{Ass-KC}
	We define $e_r:\mathcal D\to\mathcal D\cup\{0\}$ on $D\in\mathcal D$ as follows. If every cell in row $r+1$ of $D$ is $r$-paired then $e_r(D)=0$. Otherwise, take $(r+1,c)$ to be the rightmost cell in row $r+1$ of $D$ that is not $r$-paired and set $e_r(D)=(D\setminus \{(r+1,c)\} )\cup\{ (r,c) \}$. That is, we ``move'' the cell $(r+1,c)$ to row $r$.
\end{definition}
\begin{definition}\cite{Ass-KC}
	Given the above definition, the crystal lowering operator $f_r:\mathcal D\to\mathcal D\cup\{0\}$ is implicitly defined as follows. Let $D\in\mathcal D$. If every cell in row $r$ of $D$ is $r$-paired then $f_r(D)=0$. Otherwise, take $(r,c)$ to be the leftmost cell in row $r$ of $D$ that is not $r$-paired and set $f_r(D)=(D\setminus \{(r,c)\} )\cup\{ (r+1,c) \}$.
\end{definition}

Fig. \ref{fig:diag} shows the diagrams reachable from $\{ (1,1),(1,2),(2,2),(2,3) \}\in\mathcal D$ using only edges colored 1 and 2.

\begin{figure}[ht]
	\begin{tikzpicture}[scale=1.4]
		\node at (0,0) (A) {$\cirtab{ ~ & ~ & \\  & ~ & ~ \\ & & \\}$};
		\node at (1,-1) (B) {$\cirtab{ ~ & ~ & \\  &  & ~ \\ &~ & \\}$};
		\node at (2,-2) (C) {$\cirtab{ ~ & ~ & \\  &  &  \\ &~ &~ \\}$};
		\node at (1,-3) (D) {$\cirtab{  & ~ & \\ ~ &  &  \\ &~ &~ \\}$};
		\node at (0,-4) (E) {$\cirtab{  &  & \\ ~ & ~ &  \\ &~ &~ \\}$};
		\node at (0,-2) (F) {$\cirtab{  & ~ & \\ ~ &  & ~ \\ &~ & \\}$};
		\draw[thick,magenta2  ,->](A) -- (B)   node[midway,above]{$2$};
		\draw[thick,magenta2  ,->](B) -- (C)   node[midway,above]{$2$};
		\draw[thick,blue2  ,->](C) -- (D)   node[midway,above]{$1$};
		\draw[thick,blue2,->](D) -- (E)   node[midway,above ]{$1$};
		\draw[thick,magenta2,->](F) -- (D)   node[midway,above ]{$2$};
		\draw[thick,blue2,->](B) -- (F)   node[midway,above ]{$1$};
	\end{tikzpicture}
	\caption{\label{fig:diag} A portion of the diagram crystal.}
\end{figure}

We will also find useful the notion of column $c$-pairing which is defined similarly to $r$-pairing.
\begin{definition}\cite{Ass-KC}
	Let $D$ be a diagram and $c\in\Z_+$. The \textbf{column $c$-pairing} of $D$ is an iterative construction where we say that two cells $x$ and $y$ in columns $c$ and $c+1$ respectively with $x$ in a row weakly below that of $y$ are \textbf{column $c$-paired} whenever every other cell in columns $c$ and $c+1$ in a row weakly between $x$ and $y$ is already column $c$-paired.
\end{definition}

The following is a consequence of \cite[Theorem~4.2.5]{Ass-KC}.
\begin{proposition}\label{prop: col_pairs}
	The diagram crystal raising and lowering operators do not change the number of column $c$-pairs.
\end{proposition}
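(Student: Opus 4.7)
The plan is to reduce to showing that $e_r$ preserves the column $c$-pair count for any $c$: since $f_r$ is the inverse of $e_r$ wherever either is defined, the analogous statement for $f_r$ follows immediately. So let $D$ be a diagram with $e_r(D) \ne 0$, and write $e_r(D) = (D \setminus \{(r+1, c_0)\}) \cup \{(r, c_0)\}$, where $(r+1, c_0)$ is the rightmost $r$-unpaired cell in row $r+1$ of $D$.

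First I would dispose of the easy case: if $c_0 \notin \{c, c+1\}$, then the cells in columns $c$ and $c+1$ are identical in $D$ and $e_r(D)$, so the column $c$-pairing is literally unchanged. The substantive cases are $c_0 = c$ and $c_0 = c+1$, where the cell that moves is one of the cells potentially involved in the column $c$-pairing.

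Next, I would recast the column $c$-pairing as a bracket matching. Reading the cells in columns $c$ and $c+1$ from top to bottom (and, within each row, placing the column-$(c+1)$ cell, if present, before the column-$c$ cell), assign an opening bracket to each column-$(c+1)$ cell and a closing bracket to each column-$c$ cell. One checks from the iterative construction that the column $c$-pairs are in bijection with the matched brackets. Under $e_r$ with $c_0 \in \{c, c+1\}$, exactly one bracket is shifted to an earlier position in this sequence, namely from its row-$(r+1)$ position to its row-$r$ position. The heart of the argument is to show that this shift preserves the match count. The crucial input is that $(r+1, c_0)$ is $r$-unpaired and rightmost such in row $r+1$; I would translate this hypothesis into constraints on which cells can occupy the neighboring positions in columns $c$ and $c+1$ within rows $r$ and $r+1$, and then verify by direct inspection that in each of the resulting local configurations the number of matched brackets remains the same.

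The main obstacle is exactly this translation between a horizontal $r$-pairing constraint and a vertical column $c$-pairing count, and this compatibility is precisely the content of \cite[Theorem~4.2.5]{Ass-KC}, which establishes commutation between the row-based crystal operators $e_r, f_r$ and a parallel family of operators defined via column $c$-pairings. The stated proposition is an immediate consequence of that commutation, and I would appeal to the theorem for the formal verification rather than reproduce the case analysis here.
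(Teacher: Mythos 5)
Your proposal is correct and ultimately takes the same route as the paper: the paper gives no proof of this proposition at all, simply stating it as a consequence of \cite[Theorem~4.2.5]{Ass-KC}, which is precisely the citation your argument defers to for the decisive step. The additional scaffolding you sketch (reduction to $e_r$, the trivial case $c_0\notin\{c,c+1\}$, and the bracket-matching reformulation) is reasonable but is not carried out in the paper either.
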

We will find this proposition applicable when a diagram $D$ shares a connected component in the crystal with a diagram $D'$ that is ``top-justified'', i.e. such that when $(r,c)\in D$ with $r>1$ we have $(r-1,c)\in D$ as well. In this case, Proposition \ref{prop: col_pairs} implies every cell in the shorter of the two columns $c$ and $c+1$ is column $c$-paired.

A second invariant of components in the diagram crystal that we will find useful is given by the next lemma.
\begin{lemma}\label{lem: max_descent}
	Let $D$ be a diagram and consider sequences of cells $(r_1,c_1),\ldots,(r_k,c_k)$ in $D$ such that $r_i<r_{i+1}$ and $c_i\le c_{i+1}$. The maximal length of such a sequence is unchanged by the crystal raising and lowering operators.
\end{lemma}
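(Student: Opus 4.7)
The plan is to show $\ell(D) \le \ell(e_r(D))$ whenever $e_r(D) \neq 0$; the reverse inequality $\ell(e_r(D)) \le \ell(D)$ follows by the analogous mirror argument for $f_r$ applied to $D' := e_r(D)$ (which returns $D$), and together these give invariance. Write $(r+1, c)$ for the cell moved by $e_r$, so $D' = (D \setminus \{(r+1,c)\}) \cup \{(r,c)\}$. Let $P = ((r_j, c_j))_{j=1}^k$ be a maximum chain of length $k = \ell(D)$ in $D$. I will produce a chain of the same length in $D'$.

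If $(r+1, c) \notin P$, then $P \subseteq D \setminus \{(r+1, c)\} \subseteq D'$, so $P$ itself works. Otherwise write $(r_i, c_i) = (r+1, c)$. When $i = 1$ or $r_{i-1} < r$, substituting $(r, c)$ for $(r_i, c_i)$ in $P$ gives a valid chain in $D'$ since all row and column inequalities are preserved. The subtle case is $r_{i-1} = r$. I first observe that $c_{i-1} < c$ must hold strictly: otherwise $(r, c)$ and $(r+1, c)$ both belong to $D$ and are adjacent in the column-order reading of rows $r, r+1$, forcing them to form an $r$-pair and contradicting the assumption that $(r+1, c)$ is unpaired. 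Given $c_{i-1} < c$, I will find a cell $(r+1, c'') \in D$ with $c_{i-1} \le c'' < c$; replacing $(r+1, c)$ with $(r+1, c'')$ in $P$ then yields a chain of length $k$ lying in $D \setminus \{(r+1, c)\} \subseteq D'$.

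The heart of the argument is the existence of such $(r+1, c'')$, which I establish by contradiction. Assume no row-$(r+1)$ cell of $D$ lies in columns $[c_{i-1}, c-1]$, and examine the $r$-pairing status of $(r, c_{i-1})$. Recall the general structural fact that unpaired cells in a bracket matching arrange as ``$) \cdots ) ( \cdots ($'' in reading order, so unpaired row-$(r+1)$ cells precede unpaired row-$r$ cells in column order. If $(r, c_{i-1})$ is unpaired, then the unpaired ``$($'' at column $c_{i-1}$ precedes the unpaired ``$)$'' at $(r+1, c)$, violating this pattern. If $(r, c_{i-1})$ is paired with some $(r+1, d)$, then $d \ge c_{i-1}$ with $d \ne c$ (else $(r+1, c)$ would be paired). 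The option $d \in [c_{i-1}, c-1]$ contradicts the assumption, and $d > c$ places $(r+1, c)$ strictly between the matched pair $\{(r, c_{i-1}), (r+1, d)\}$, which by bracket-matching would force $(r+1, c)$ to be paired, again a contradiction. Hence $(r+1, c'')$ exists.

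The main obstacle I anticipate is handling the column-order convention (row $r$ listed before row $r+1$ within a shared column) precisely enough that the ``rightmost unpaired row-$(r+1)$ cell'' characterization of $(r+1, c)$ yields the three clean contradictions above. Once the reading order is fixed, each sub-case is a one-line bracket-matching argument.
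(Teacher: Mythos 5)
Your proof is correct and follows essentially the same route as the paper's: the only nontrivial case is when the moved cell $(r+1,c)$ lies in a maximal chain with its predecessor in row $r$, and both arguments resolve it by using the $r$-pairing structure to produce a row-$(r+1)$ cell in a column weakly right of $c_{i-1}$ and strictly left of $c$ to substitute into the chain. The paper obtains this cell directly as the $r$-pairing partner of $(r,c_{i-1})$, while you derive its existence by contradiction from the bracket-matching structure; these are the same idea.
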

\begin{proof}
	Suppose $(r_1,c_1),\ldots,(r_k,c_k)\in D$ is a sequence of cells in a diagram $D$ with $r_i<r_{i+1}$ and $c_i\le c_{i+1}$ of maximal length. We will show that there is such a sequence of cells of length $k$ in $e_r(D)$ if $e_r(D)\ne0$. 
	
	The only way this might not be the case is if $r+1=r_i$ for some $1\le i\le k$ and $e_{r}$ were to remove $(r_i,c_i)$ from $D$. In this case, we have $(r_i-1,c_i)\in e_r(D)$. Unless $i>1$ and $r_{i-1}=r_i-1$, we get another sequence of cells in $e_r(D)$ with the desired properties by simply replacing $(r_i,c_i)$ in the sequence with $(r_i-1,c_i)$. So assume $i>1$ and $r_{i-1}=r_i-1$.
	
	We know $(r_i,c_i)$ is not $r$-paired in $D$. Then $(r_{i-1},c_{i-1})$ must be $r$-paired with some $(r_i, d)$ with $c_{i-1}\le d< c_i$. Replacing $(r_i,c_i)$ in the sequence $(r_1,c_1),\ldots,(r_k,c_k)$ with $(r_i,d)$ then gives us a sequence in $e_r(D)$ of length $k$ with the desired properties.
	
	It is similar to show that if $f_r(D)\ne 0$ then $f_r(D)$ has such a sequence of cells of length $k$.
\end{proof}
Once again considering the cases of a component containing a top-justified diagram, the lemma says that the maximal length of a sequence of cells as described above is the maximal number of cells in a column.

Assaf's Theorem 5.3.4 specializes to the following statement.

\begin{theorem}\cite{Ass-KC}\label{thm: diagrams}
	Let $D$ be a top-justified diagram. Then the character of the component of the diagram crystal containing $D$ is $s_{\wt(D)}$. Additionally, $D$ is the unique highest weight element in its connected component.
\end{theorem}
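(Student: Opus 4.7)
The plan is to prove both assertions by showing that the connected component of a top-justified diagram $D$ is isomorphic, as a crystal, to the Kashiwara crystal of semistandard Young tableaux of shape $\wt(D)$. Since the latter has character $s_{\wt(D)}$ and a unique highest weight element, both conclusions follow at once. Two steps are involved: first, verifying that $D$ itself is a highest weight element of weight $\wt(D)$; and second, invoking the general fact that the diagram crystal is a normal crystal of type $A$, so that the isomorphism class of each connected component is determined by the weight of its highest weight element.

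For the first step I would check that $e_r(D) = 0$ for every $r \ge 1$. Top-justifiedness forces $(r,c) \in D$ whenever $(r+1,c) \in D$, so every cell in row $r+1$ has a partner directly above it in row $r$. Processing the cells of row $r+1$ from left to right, I claim each pair $\{(r,c),(r+1,c)\}$ becomes $r$-paired the moment $(r+1,c)$ is visited: the ``columns weakly between'' condition reduces to the single column $c$, and the only cells of rows $r, r+1$ in that column are the two cells being paired, so the admissibility requirement is vacuous. Hence every cell of row $r+1$ is $r$-paired, and $e_r(D) = 0$.

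For the second step, once $D$ is known to be highest weight of weight $\wt(D)$, the identification with the SSYT crystal transfers both the character formula $s_{\wt(D)}$ and the uniqueness of the highest weight element. As a sanity check one can derive uniqueness directly from the invariants already in the excerpt: by Proposition~\ref{prop: col_pairs} the column-$c$-pair counts are constant along crystal edges, and for top-justified $D$ these counts equal $\min(\mu_c,\mu_{c+1})$, where $\mu_c$ is the length of column $c$ of $D$; combined with Lemma~\ref{lem: max_descent}, which records the longest column length, the entire column-length sequence of $D$ becomes an invariant of its component. Any other highest weight element would then have to be top-justified with the same column lengths (as argued in the first step, non-top-justified diagrams admit some unpaired cell in row $r+1$ for some $r$), and a column-by-column comparison forces equality with $D$.

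The main obstacle is the crystal-isomorphism step, which depends on knowing that the diagram crystal satisfies the Stembridge-type local axioms characterising normal type $A$ crystals. This is the technical heart of the argument in \cite{Ass-KC}, and my plan is to cite Assaf's verification as a black box, contributing only the specialization to top-justified diagrams given above.
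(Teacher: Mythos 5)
The paper does not prove this statement at all: it is imported verbatim as a specialization of Assaf's Theorem~5.3.4 in \cite{Ass-KC}, so there is no in-paper argument to compare against. Your plan---verify that a top-justified $D$ is highest weight, then cite Assaf's identification of these components with the SSYT crystal as a black box---is therefore essentially the same move the paper makes, with the small added value of spelling out the specialization. Your first step is correct: for a top-justified diagram every cell $(r+1,c)$ has $(r,c)$ directly above it, the ``columns weakly between'' condition is vacuous for a same-column pair, so every cell of row $r+1$ is $r$-paired and $e_r(D)=0$.

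One claim in your ``sanity check'' is false, however: it is not true that every non-top-justified diagram has an unpaired cell in row $r+1$ for some $r$. The diagram $\{(1,1),(2,2)\}$ is not top-justified, yet $(1,1)$ and $(2,2)$ form a $1$-pair (the intermediate-column condition is vacuous), so $e_1$ and all higher $e_r$ vanish and the diagram is highest weight. Your first step only establishes the implication ``top-justified $\Rightarrow$ highest weight,'' not its converse, so the argument that any other highest weight element of the component must be top-justified does not follow; uniqueness genuinely requires the isomorphism with the normal crystal (or some other argument), and cannot be recovered from Proposition~\ref{prop: col_pairs} and Lemma~\ref{lem: max_descent} in the way you sketch. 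Since you present this only as a supplementary check and your main route rests on the cited isomorphism, the overall plan survives, but that parenthetical should be deleted or repaired.
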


\begin{remark}\label{rem: young}
	The theorem as presented in \cite{Ass-KC} actually shows that there is a crystal isomorphism between the relevant components of the diagram crystal and a tableaux crystal. The diagrams in a given component are then in weight preserving bijection with semi-standard Young tableaux of shape $\wt(D)$.
\end{remark}

\section{The $r$ Alignment}\label{sec: align}
For the remainder of this paper, $(P,\le_P)$ is always a finite (3+1)-free poset. In this section we will introduce the $r$ alignment of a $P$-array. The $r$ alignment will be fundamental for defining the crystal operators on $P$-arrays in Section \ref{sec: operators}. It is a way of horizontally spacing out the elements of rows $r$ and $r+1$, and can be thought of as a ``pairing rule,'' analogous to the idea of $r$-pairs for the diagram crystal, which helps us determine how $e_r$ and $f_r$ affect rows $r$ and $r+1$.

\begin{definition}
	Let $A$ be a $P$-array and let $r\ge 1$. Let the chains $a_1<_P\cdots<_Pa_m$ and $b_1<_P\cdots<_P b_n$ be the elements of rows $r$ and $r+1$ of $A$ respectively. Let $C:\{a_1,\ldots,a_m,b_1,\ldots,b_n \}\to\Z_{+}$ be the function inductively defined so that $C(b_k)=k$ for each $1\le k\le n$, and
	\[
	C(a_k)=\max(\{C(b_i)\mid b_i<_P a_k  \}\cup \{ C(a_i) \mid 1\le i<k \})+1.
	\]
	
	Then we define the \textbf{$r$ pre-alignment} of $A$ to be the map \[\{a_1,\ldots,a_m,b_1,\ldots,b_n \}\to\{r,r+1\}\times\Z_{+}\] such that each $a_k$ maps to $(r,C(a_k))$, and each $b_k$ to $(r+1, C(b_k))$.
\end{definition}

Fig. \ref{fig:prealign} shows a visualization of the $r$ pre-alignment for a $P$-array $A$ if $P$ is as in Fig. \ref{fig:poset}, row $r$ of $A$ contains the elements $b,c,d$ and row $r+1$ contains $a,h$.

\begin{figure}[ht]
\ytableausetup{mathmode, centertableaux,boxsize=1.5em}\begin{ytableau} 
	\none[r]&\none&\none&b&c&d\\\none[r+1]&\none&a&h
\end{ytableau} 
	
	\caption{\label{fig:prealign} An $r$ pre-alignment for a $P$-array with $P$ as in Fig. \ref{fig:poset}.}
\end{figure}

\begin{definition}\label{defn: align}
		Let $A$ be a $P$-array and let $r\ge 1$. Let the chains $a_1<_P\cdots<_Pa_m$ and $b_1<_P\cdots<_P b_n$ be the elements of rows $r$ and $r+1$ of $A$ respectively. Let $\phi_0$ be the $r$ pre-alignment. We construct the \textbf{$r$ alignment} of $A$ as follows.
		
		Suppose we have some $\phi_k: \{a_1,\ldots,a_m,b_1,\ldots,b_n \}\to\{r,r+1\}\times\Z_{+}$. Select the rightmost element $x$ mapped to some $(i,c)$ such that column $c+1$ of $\phi_k$ is nonempty and contains no $y>_Px$, if such an $x$ exists. Then we define \[\phi_{k+1}:\{a_1,\ldots,a_m,b_1,\ldots,b_n \}\to\{r,r+1\}\times\Z_{+}\] so that $\phi_{k+1}(x)=(i,c+1)$ and $\phi_{k+1}$ coincides with $\phi_k$ elsewhere. If no such $x$ exists then the $r$ alignment of $A$ is defined to be $\phi_k$.
\end{definition}

Fig. \ref{fig:align} shows the $r$ alignment for a $P$-array with rows $r$ and $r+1$ as in Fig. \ref{fig:prealign}.

\begin{figure}[ht]
	\ytableausetup{mathmode, centertableaux,boxsize=1.5em}\begin{ytableau} 
		\none[r]&\none&\none&b&c&d\\\none[r+1]&\none&a&\none&h
	\end{ytableau} 
	
	\caption{\label{fig:align} The $r$ alignment for a $P$-array with $P$ as in Fig. \ref{fig:poset}.}
\end{figure}

\begin{proposition}
	The $r$ alignment of a $P$-array $A$ is well-defined. Additionally, if $a_1<_P\cdots<_Pa_m$ and $b_1<_P\cdots<_P b_n$ are the elements of rows $r$ and $r+1$ of $A$ respectively, then in the $r$ alignment each $a_i$ is in a column strictly left of $a_{i+1}$ and each $b_i$ is in a column strictly left of $b_{i+1}$.
\end{proposition}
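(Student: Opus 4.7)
The plan is to prove three interlocking statements simultaneously by induction on the construction index $k$: (M) within each row the column positions of $\phi_k$ are strictly increasing; (U) the element to be moved to form $\phi_{k+1}$ is uniquely determined; and (T) the process terminates. Statement (M) is the second claim of the proposition, while (U) and (T) together deliver well-definedness.

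I would first verify (M) for the pre-alignment $\phi_0$. Within row $r+1$ this is immediate from $C(b_k)=k$. Within row $r$ the recursion for $C(a_k)$ includes $C(a_{k-1})$ in its max, so $C(a_k)\ge C(a_{k-1})+1$. For the inductive step, suppose $\phi_k$ satisfies (M) and an element $x=a_i$ is moved from column $c$ to column $c+1$ (the case $x=b_j$ is symmetric). The only potential obstacle to monotonicity is $a_{i+1}$ already being in column $c+1$, but then $a_{i+1}>_P a_i$ would contradict the qualifying condition that column $c+1$ contain no $y>_P x$.

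Next I would establish (U) using (M). At most two elements share any column of $\phi_k$, one per row. If $a_i$ and $b_j$ both lie in column $c$ and both qualify, then column $c+1$ is nonempty and by (M) contains some $a_{i'}$ with $i'>i$ or some $b_{j'}$ with $j'>j$; the former forces $a_{i'}>_P a_i$ and disqualifies $a_i$, while the latter forces $b_{j'}>_P b_j$ and disqualifies $b_j$. So at most one element per column qualifies, and interpreting ``rightmost'' as largest column index makes the choice unambiguous. For (T) I would use the observation that every move requires column $c+1$ to already be nonempty, so no new column is ever occupied beyond those in $\phi_0$. The quantity $\sum_x \phi_k(x)_2$ is therefore bounded above by $(m+n)$ times the largest column appearing in $\phi_0$, and since each move increases this sum by exactly $1$, the construction must halt.

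The main obstacle is (U), because the wording ``rightmost element'' in Definition \ref{defn: align} tacitly assumes no ambiguity from two qualifying elements sharing a column; ruling this out requires the monotonicity invariant (M), which is why the three facts must be handled by a single interlocking induction rather than proven in sequence.
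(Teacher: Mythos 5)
Your proof is correct and follows essentially the same route as the paper's: an induction that maintains within-row monotonicity, uses it to show at most one element per column qualifies (so ``rightmost'' is unambiguous), and bounds termination by the fact that no element can move past the rightmost column of the pre-alignment. The only cosmetic difference is that you phrase termination via an explicit increasing, bounded potential $\sum_x \phi_k(x)_2$, whereas the paper simply notes the rightmost occupied column $d$ is invariant.
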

\begin{proof}
	Our attention is restricted entirely to rows $r$ and $r+1$ of $A$. That the $r$ pre-alignment $\phi_0$ satisfies the condition on $a_1,\ldots, a_m$ and $b_1,\ldots, b_n$ is immediate from the definition. Let $d$ be the rightmost occupied column in the $r$ pre-alignment. Suppose we have $\phi_k$ as in Definition \ref{defn: align} that satisfies the condition on $a_1,\ldots, a_m$ and $b_1,\ldots, b_n$, and the rightmost occupied column of $\phi_k$ is $d$.
	
	Should these inductive hypotheses continue to hold, we will have shown that the process to determine the $r$ alignment must terminate, as we are limited by $d$ in how far we can shift each element to the right. The other obstacle to well-definiteness is the question of whether $x$, as in the definition, is unique when it exists. To this end, consider some column $c$ in $\phi_k$ which contains two elements, some $a_i$ and $b_j$. If column $c+1$ contains an element in $\phi_k$ then by assumption it is either $a_{i+1}$ or $b_{j+1}$. Thus, there is at most one element in column $c$ for which there is no greater element in column $c+1$. This indeed proves that $x$ is uniquely determined when it exists.
	
	Now suppose $x$ exists and resides in some column $c$ of $\phi_k$. By choice of $x$, column $c+1$ is nonempty in $\phi_k$, so $c<d$. In $\phi_{k+1}$ we know $x$ is mapped to column $c+1$, so any column right of $d$ remains unoccupied. Moreover, if $x=a_i$ for some $1\le i\le m$, then by the inductive hypothesis $a_{i-1}$ (if it exists) is strictly left of column $c$ in both $\phi_k$ and $\phi_{k+1}$. If $a_{i+1}>_P x$ exists then it is strictly right of column $c+1$ of $\phi_k$ and $\phi_{k+1}$ using both the inductive hypothesis and the choice of $x$. The same argument can be made when $x=b_i$ for some $1\le i\le n$. Then the inductive hypothesis on $a_1,\ldots, a_m$ and $b_1,\ldots, b_n$ continues to hold.
\end{proof}

\begin{lemma}\label{lem:sw}
	In the $r$ alignment of a $P$-array $A$, any element in row $r+1$ that does not share a column with an element in row $r$ is strictly left of any element in row $r$ that does not share a column with an element in row $r+1$.
\end{lemma}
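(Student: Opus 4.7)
The plan is to prove, by induction on the step index $k$, the stronger invariant that in each intermediate $\phi_k$ arising in Definition~\ref{defn: align}, every column containing a row-$(r+1)$ element but no row-$r$ element (a ``lonely-$(r+1)$'' column) lies strictly left of every column containing a row-$r$ element but no row-$(r+1)$ element (a ``lonely-$r$'' column). Applied to the terminal $\phi_k$, this is exactly the statement of the lemma.

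For the base case $\phi_0$, observe that by construction of the $r$ pre-alignment the elements $b_1, \ldots, b_n$ occupy columns $1, \ldots, n$. Any lonely-$(r+1)$ column therefore has index $\le n$, while any lonely-$r$ column, containing no $b_i$, must have index $>n$, so the invariant holds.

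For the inductive step, suppose $\phi_{k+1}$ is obtained from $\phi_k$ by shifting an element $x$ from column $c$ to column $c+1$. The proposition just proved, combined with the chain condition on each row, forces column $c+1$ of $\phi_k$ to contain no element of the same row as $x$: a later chain element from that row would be strictly greater than $x$ and would have blocked the shift, while any earlier one lies strictly left of $c$. Consequently, when $x$ is a row-$r$ element, column $c+1$ of $\phi_k$ is lonely-$(r+1)$, and when $x$ is a row-$(r+1)$ element, column $c+1$ of $\phi_k$ is lonely-$r$. Case analysis on the type of column $c$ then shows that each shift either translates a single lonely column exactly one step to the left (from $c+1$ to $c$, via a column of the opposite type at $c$ becoming mixed) or simultaneously destroys a lonely-$r$/lonely-$(r+1)$ pair of adjacent columns. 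In both cases the invariant is maintained, since any other lonely column of the opposite type was already strictly beyond $c+1$ by the inductive hypothesis.

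The one configuration that would threaten the invariant is a shift of $x \in$ row $r$ out of a lonely-$r$ column at $c$ into a lonely-$(r+1)$ column at $c+1$; but this configuration is forbidden outright by the inductive hypothesis, which requires the lonely-$r$ column to sit strictly right of the lonely-$(r+1)$ one. The only real obstacle is keeping this case analysis tidy — the argument is essentially bookkeeping driven by the chain structure on each row, with no further combinatorial subtlety and no appeal to $(3+1)$-freeness.
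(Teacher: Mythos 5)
Your proof is correct and takes essentially the same approach as the paper's: induct over the shift steps of Definition~\ref{defn: align}, note that the target column $c+1$ always holds a single element from the row opposite to $x$, and check that each shift at worst translates a lonely column one step to the left without letting it cross a lonely column of the other type. The only difference is that the paper additionally asserts that the source column $c$ always contains an element of each row, whereas you instead handle the possibility that $x$ is alone in its column via a short extra case analysis (using the inductive hypothesis to exclude the one dangerous configuration); either way the invariant is preserved.
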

\begin{proof}
	This is clear for the row $r$ pre-alignment. Prior to moving an element from some column $c$ to column $c+1$ while obtaining the $r$ alignment, column $c$ must contain an element in each row while column $c+1$ contains exactly one element. Thus, the property is maintained as we move elements to the right.
\end{proof}

In Section \ref{sec: operators} we will define our crystal operators on $\parrays{P}$ using the $r$ alignment. To prove that our operators are essentially inverses as required, we will want to see that they act predictably on the $r$ alignment. This motivates us to write a more static characterization of the $r$ alignment, as opposed to the procedural definition.

\begin{definition}\label{defn: weak_align}
	Let $A$ be a $P$-array. Let the chains $a_1<_P a_2<_P\cdots<_Pa_m$ and $b_1<_Pb_2<_P\cdots<_P b_n$ be the elements of rows $r$ and $r+1$ of $A$ respectively. A \textbf{weak $r$ alignment} of $A$ is a map $\phi:\{a_1,\ldots,a_m,b_1,\ldots,b_n \}\to\{r,r+1\}\times\Z_{+}$ satisfying the following properties:
	\begin{enumerate}
		\item each $a_i$ is mapped to row $r$ in a column strictly right of $a_{i-1}$ if it exists, and each $b_i$ is mapped to row $r+1$ in a column strictly right of $b_{i-1}$ if it exists,
		\item if $\phi$ maps an element to some column $c>1$, then it also maps an element to column $c-1$,
		\item if $x<_P y$ with $x$ mapped to row $r+1$ and $y$ mapped to row $r$ then $x$ is mapped to a column strictly left of $y$, and
		\item if some $x$ is mapped to some column $c$, then either $\phi$ maps some $y>_Px$ to column $c+1$ or it maps no elements to column $c+1$.
	\end{enumerate}
\end{definition}

\begin{definition}
	Let $A$ be a $P$-array. Let the chains $a_1<_P a_2<_P\cdots<_Pa_m$ and $b_1<_Pb_2<_P\cdots<_P b_n$ be the elements of rows $r$ and $r+1$ of $A$ respectively. Let $\phi,\psi:\{a_1,\ldots,a_m,b_1,\ldots,b_n \}\to\{r,r+1\}\times\Z_{+}$. We write $\phi\preceq^* \psi$ when the position of each element in $\phi$ is in a column weakly left of its position in $\psi$. This defines a partial order on such maps. If $\phi$ and $\psi$ are weak $r$ alignments of $A$ we write $\phi\preceq \psi$.
\end{definition}

\begin{proposition}\label{prop:algn_char}
	Let $A$ be a $P$-array and $r\ge 1$. The $r$ alignment is the unique minimal weak $r$ alignment according to the partial order $\preceq$.
\end{proposition}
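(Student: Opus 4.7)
The plan is to show that the output $\phi^\ast$ of the procedure in Definition~\ref{defn: align} is a weak $r$ alignment, and then that $\phi^\ast \preceq \psi$ for every weak $r$ alignment $\psi$; uniqueness of the $\preceq$-minimum then follows from antisymmetry. For the first stage I would verify the four conditions of Definition~\ref{defn: weak_align} by induction on the steps $\phi_0, \phi_1, \ldots, \phi^\ast$ of the procedure. Condition (1) is already handled inductively in the preceding proposition. Condition (3) is preserved at each step, since a move of $x$ from column $c$ to column $c+1$ could only create a violation by carrying $x$ onto the column of some $y$ with $y >_P x$ in the other row, and the qualifying condition ``column $c+1$ contains no $z >_P x$'' rules this out. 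Condition (4) holds at termination by the very stopping criterion. For condition (2), the pre-alignment occupies a contiguous initial segment of columns (a consequence of the bound $C(a_k) \leq \max(n, C(a_{k-1})) + 1$ from the recursion for $C$), and a short case analysis on the row of $x$ shows that no single step can empty a column.

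For the second stage I would show by induction on $k$ that $\phi_k \preceq^\ast \psi$ for every weak $r$ alignment $\psi$. The base case amounts to showing that $\psi(b_i)$ lies in a column of index $\geq i = C(b_i)$ (immediate from condition (1) of $\psi$) and $\psi(a_k)$ lies in a column of index $\geq C(a_k)$ (a secondary induction on $k$ mirroring the recursion for $C(a_k)$ using conditions (1) and (3) of $\psi$). For the inductive step, when $x$ is moved from column $c$ to column $c+1$ in passing from $\phi_k$ to $\phi_{k+1}$, I need $\psi(x)$ to lie in a column of index $\geq c+1$. If instead $\psi(x)$ lies in column $c$, then condition (4) of $\psi$ forces either (A) column $c+1$ of $\psi$ to be empty, or (B) column $c+1$ of $\psi$ to contain some $y >_P x$. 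Case (A) contradicts the inductive hypothesis, because condition (2) propagates emptiness rightward in $\psi$, yet the $\phi_k$-occupant of the nonempty column $c+1$ must have $\psi$-image in a column of index $\geq c+1$. In case (B), the inductive hypothesis puts $\phi_k(y)$ in a column of index $\leq c+1$; it cannot be exactly $c+1$ since the procedure chose $x$ precisely because column $c+1$ of $\phi_k$ contains no $z >_P x$. So $\phi_k(y)$ lies in column $\leq c$, and I split on the rows of $x$ and $y$: when they share a row, condition (1) of $\phi_k$ puts $\phi_k(y)$ in column $> c$; when $x$ is in row $r+1$ and $y$ is in row $r$, condition (3) of $\phi_k$ does the same; and when $x = a_i$ and $y = b_j$ with $b_j >_P a_i$, one uses condition (1) of $\phi_k$ to compare $b_j$ with the $\phi_k$-occupant of column $c+1$ (necessarily some $b_\ell$ with $\ell > j$) and then transitivity $a_i <_P b_j <_P b_\ell$ to conclude $b_\ell >_P x$, contradicting the procedure's selection of $x$.

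The main obstacle is this final sub-case of the inductive step, where $x$ and $y$ lie in different rows and condition (3) does not directly apply. The resolution forces one to walk along the $b$-chain in $\phi_k$, combining condition (1) with transitivity in $P$, rather than invoking a single axiom of the weak alignment. Every other piece of the argument follows immediately from the weak-alignment conditions.
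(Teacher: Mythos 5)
Your overall architecture---first verifying conditions (1)--(4) of Definition~\ref{defn: weak_align} for the procedure's output, then showing by induction on the steps $\phi_0,\phi_1,\ldots$ that $\phi_k\preceq^\ast\psi$ for every weak $r$ alignment $\psi$---is exactly the paper's, and your handling of (1), (3), (4) and of the entire minimality stage (including the delicate sub-case where the witness $y>_Px$ lies in the row opposite $x$, which you resolve by walking along the chain in the other row and using transitivity, just as the paper does) is sound.

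The gap is in condition (2). The assertion that ``no single step can empty a column'' is true, but it does not follow from a case analysis on the row of $x$. Consider a column $c$ of some $\phi_k$ whose only occupant is an element $b_j$ of row $r+1$, while column $c+1$ contains only an element $a_i$ of row $r$ with $a_i\not>_Pb_j$. Then $b_j$ meets the procedure's selection criterion and moving it would empty column $c$; yet nothing local---not condition (1) or (3) for $\phi_k$, nor the southwest property of Lemma~\ref{lem:sw}---forbids this picture. (The mirror case with $x$ in row $r$ \emph{is} excluded locally, since column $c+1$ would have to contain either a larger element of the $a$-chain or a lone row-$(r+1)$ entry sitting to the right of a lone row-$r$ entry; this asymmetry is why the row case split looks more powerful than it is.) What actually rules the configuration out is a global invariant: the recursion for $C$ guarantees a chain $x_1<_P\cdots<_Px_\ell$ occupying columns $1,\ldots,d$ of the pre-alignment, where $d$ is its rightmost occupied column, and no element of this chain ever qualifies to move---each $x_j$ is blocked by $x_{j+1}>_Px_j$ in column $j+1$, and $x_\ell$ by the permanently empty column $d+1$. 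Hence columns $1,\ldots,d$ stay occupied and no element passes column $d$, which yields (2) for the final alignment (and also the termination bound you derive separately). You need this chain invariant, or something equivalent, to close the argument for (2); the rest of your proof stands as written.
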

\begin{proof}
	That the $r$ alignment satisfies (1) and (4) follows by definition. Property (3) is clear in the pre-alignment, and $x$ can never be shifted right into a column containing $y$. So (3) holds in the alignment as well.
	
	Let $\phi_0$ denote the $r$ pre-alignment. For every element in a column $c>1$ of $\phi_0$ there is a lesser element in column $c-1$. Therefore there is some chain of elements in consecutive columns $x_1<_P\cdots<_P x_\ell$ where $x_1$ is in the first column and $x_\ell$ is in the rightmost nonempty column, call it $d$. No element can move right of $d$ as we construct the $r$ alignment, and therefore each element in the chain $x_1<_P\cdots<_P x_\ell$ is in the same position in the $r$ alignment. This proves that the $r$ alignment satisfies (2), and is therefore a weak $r$ alignment.
	
	Now let $\psi$ be a weak $r$ alignment. For any $a_k$ there is some sequence \[b_1,\ldots, b_j,a_i,\ldots, a_k\] (possibly with $j=0$) that is increasing in $\le_P$ and occupy consecutive columns in the $r$ pre-alignment, similarly to above. By properties (1) and (3), each element in this sequence must be in a column strictly right of the preceding one in $\psi$. Thus, the position of $a_k$ in $\psi$ is weakly right of its position in the $r$ pre-alignment. The same is certainly true for each $b_k$ so $\phi_0\preceq^\ast \psi$.
	
	Let $\phi_k$ be an intermediate step between the $r$ pre-alignment and $r$ alignment as in Definition \ref{defn: align}, and assume $\phi_k\preceq^\ast \psi$. Suppose there exists a rightmost $x$ in $\phi_k$ such that the adjacent column to its right is nonempty and contains no element greater than $x$. We will show $x$ lies strictly right in $\psi$ of its position in $\phi_k$. This is true if $x$ is in the rightmost nonempty column of $\psi$, which is weakly right of the rightmost nonempty column of $\phi_k$, which is in turn strictly right of $x$ in $\phi_k$. Otherwise, there is some $y>_Px$ one column right of $x$ in $\psi$ by (4).
	
	Say $x$ lies in column $c$ in $\phi_k$. If $y$ lies in a column strictly right of $c$ in $\phi_k$, it must be in column $c+i$ for some $i\ge 2$ by choice of $x$. Then $y$ lies strictly right of column $c+i-1$ in $\psi$ which means $x$ lies strictly right of $c+i-2\ge c$ in $\psi$.
	
	Suppose instead $y$ lies in some column weakly left of $c$ in $\phi_k$. Then $x$ and $y$ lie in opposite rows. However, this means any element right of $x$ in $\phi_k$ is greater than $x$ which contradicts our choice of $x$. By induction we can now say that $\phi_{k+1}\preceq^\ast \psi$. Then if $\phi_k$ is the $r$ alignment we have $\phi_k\preceq \psi$.
\end{proof}

Using Proposition \ref{prop:algn_char}, we end the section with a final useful lemma.
\begin{lemma}\label{lem:col_ordering}
	If the $r$ alignment of $A$ maps a unique element $x\in P$ to some column $c$, then any element strictly left of $c$ is smaller than any element weakly right of $c$ in the $r$ alignment.
	%any element strictly left of $c$ is smaller than any element weakly right of $c$.
\end{lemma}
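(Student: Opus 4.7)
The plan is to show first that $y <_P x$ for any $y$ in a column $c_y < c$, and then to upgrade this to $y <_P z$ for any $z$ in a column $c_z \ge c$. By Proposition \ref{prop:algn_char}, $\phi$ is a weak $r$ alignment. Iterating property (4) of Definition \ref{defn: weak_align} starting at $y$, and using property (2) to ensure each intermediate column is nonempty, produces a chain $y = y_0 <_P y_1 <_P \cdots <_P y_{c-c_y}$ with $y_j$ in column $c_y + j$. Since column $c$ contains only $x$, this chain terminates at $y_{c-c_y} = x$, proving $y <_P x$.

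Now fix $z$ with $c_z \ge c$. If $z = x$, we are done. If $c_z > c$ and $z$ lies in the same row as $x$, property (1) puts $z$ later than $x$ in that row's chain, so $z >_P x$ and $y <_P z$ by transitivity. If $z$ lies in the row opposite to $x$, Lemma \ref{lem:sw} yields a dichotomy: either $z$ is alone in column $c_z$, in which case applying the Part 1 argument with $c$ replaced by $c_z$ terminates the chain from $y$ at $z$ itself; or $z$ shares column $c_z$ with a partner $w$ in $x$'s row, and by property (1) $w >_P x$. Property (3) applied to the same-column pair $(z,w)$ in opposite rows excludes one cross-row direction: when $x \in$ row $r$, it forbids $z <_P w$, leaving $z >_P w$ (which finishes via $z >_P w >_P x >_P y$) or $z \parallel w$; when $x \in$ row $r+1$, it forbids $w <_P z$, leaving $w >_P z$ or $w \parallel z$.

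For the incomparable sub-cases ($z \parallel w$ or $w \parallel z$), append $w$ to the Part 1 chain to obtain $y <_P y_1 <_P \cdots <_P x <_P w$ of length at least three. The $(3{+}1)$-freeness of $P$ then forces $z$ to be comparable to some $y_j$ on this chain, and a short case check using property (3) and the row-chain order rules out $z <_P y_j$, leaving $z >_P y_j$ and hence $y <_P z$. The principal obstacle is the sub-case in which $x$ is in row $r+1$ and the partner $\tilde{b}$ of $z$ satisfies $\tilde{b} >_P z$ rather than $\tilde{b} \parallel z$: here property (3) does not exclude $z <_P x$ or $z \parallel x$, and $(3{+}1)$-freeness is trivially satisfied since $z <_P \tilde{b}$ already makes $z$ comparable to one element of the extended chain. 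To settle this case I would invoke the minimality characterization of Proposition \ref{prop:algn_char}: if $z \not>_P x$, then shifting $z$ together with the row-$r$ predecessors occupying the columns strictly between $c$ and $c_z$ one step to the left produces a weak $r$ alignment strictly below $\phi$ in $\preceq^\ast$, contradicting minimality. Equivalently, tracing the procedure in Definition \ref{defn: align} shows that the row-$r{+}1$ chain from $x$ to $\tilde{b}$ supplies a $>_P z$ element in every column between $c$ and $c_z$, so $z$ could never have been shifted across column $c$ unless $z >_P x$.
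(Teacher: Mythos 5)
Your overall strategy is the same as the paper's: iterate property (4) of Definition \ref{defn: weak_align} to build a chain from $y$ through the singleton column $c$ (giving $y<_Px$), dispatch most configurations with property (3) and the $(3{+}1)$-free condition applied to a chain ending at $z$'s column-partner, and resolve the last configuration by exhibiting a smaller weak $r$ alignment, contradicting Proposition \ref{prop:algn_char}. You also correctly isolate the genuinely hard sub-case ($x$ in row $r+1$, $z$ in row $r$ with a partner $\tilde b>_Pz$); the paper reaches the same sub-case, assumes $y\not<_Pz$, and uses $(3{+}1)$-freeness to extract exactly the facts ($z<_Pz'$ and $x\not<_Pz$) needed to legitimize a leftward shift.

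The gap is in how you execute that final shift. The paper first reduces to the \emph{leftmost} element of row $r$ in a column strictly right of $c$ (any other such element dominates it in the row chain, so the general case follows by transitivity); consequently only a single element is moved, and property (4) for it in its new column is witnessed by its partner. You instead move $z$ together with \emph{all} row-$r$ elements in columns strictly between $c$ and $c_z$, and you never verify that the result is a weak $r$ alignment. This verification can actually fail: if those row-$r$ elements do not sit in consecutive columns, an element $z_1$ in column $c+1$ whose partner $v_1$ satisfies $v_1\parallel z_1$ (which property (3) permits) lands in column $c$, while the only occupant of column $c+1$ after the shift is $v_1$; then no element of column $c+1$ exceeds $z_1$ and property (4) is violated, so no contradiction with minimality is obtained. (Your ``equivalently'' remark has the same defect --- $(3{+}1)$-freeness only guarantees that all but at most two of the row-$(r{+}1)$ elements between $x$ and $\tilde b$ exceed $z$, not all of them.) The repair is exactly the paper's reduction: prove the claim for the leftmost row-$r$ element right of column $c$, where your own single-element shift argument, with $\tilde b>_Pz$ supplying the property-(4) witness, goes through.
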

\begin{proof}
    Let $y$ be an element strictly left of $c$, and $z$ an element weakly right of $c$. If $y$ and $z$ share a row, then $y<_P z$ by (1) in Definition \ref{defn: weak_align}. We have $y<_Px$ by (4) in Definition \ref{defn: weak_align}, so if $z$ shares a row with $x$ we have $y<_P z$ as well. Thus, assume $y$ shares a row with $x$ and $z$ does not.
    
    If $z$ is in row $r+1$, let $z'$ be the element in the same column of row $r$ which exists by Lemma \ref{lem:sw}. We have $z\not<_P z'$ by (3) in Definition \ref{defn: weak_align} so $y<_P z$ by the (3+1)-free condition.
    
    Otherwise, it suffices to assume $z$ is the leftmost element in row $r$ in a column strictly right of $c$. If $z$ does not share a column with any element in row $r$ then once again we have $y<_P z$ by (4) in Definition \ref{defn: weak_align}. So suppose we have some $z'$ in the same column as $z$ in row $r+1$. If we had $y\not <_P z$ in this circumstance, that would imply $z<_P z'$ by the (3+1)-free condition, and also $x\not<_P z$. Then moving the position of $z$ one column to the left would still satisfy conditions (1)-(4) in Definition \ref{defn: weak_align}, contradicting Proposition \ref{prop:algn_char}.
\end{proof}

\section{The $P$-Array Crystal}\label{sec: operators}
We now have the tools to define the crystal operators on $\parrays{P}$.
\begin{definition}
	The \textbf{crystal lowering operator} $f_r:\parrays{P}\to\parrays{P}\cup\{0\}$ acts on $A\in\parrays{P}$ as follows. Let $a_1<_P\cdots<_Pa_m$ and $b_1<_P\cdots<_P b_n$ be the entries of rows $r$ and $r+1$ of $A$ respectively.
	\begin{itemize}
		\item If every column of the $r$ alignment with an entry in row $r$ also contains an entry in row $r+1$ then define $f_r(A)=0$.
		\item Otherwise let $p$ be minimal such that $a_p$ does not share a column with an element in row $r+1$ in the $r$ alignment. Let $t\ge 0$ be minimal such that there is no $b_i>_Pa_{p+t}$ one column right of $a_{p+t}$. Then we move $a_p,\ldots, a_{p+t}$ to row $r+1$, and any $b_i$ that shares a column with one of these entries to row $r$.
	\end{itemize}
\end{definition}

\begin{proposition}\label{prop: lower_def}
	The crystal lowering operator $f_r$ is well-defined, and if we have $A\in\parrays{P}$ with $f_r(A)\ne 0$ then $\wt(f_r(A))=\wt(A)-\alpha_r$.
\end{proposition}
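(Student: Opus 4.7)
I would verify well-definedness and the weight claim in four steps.

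First, I would argue $t$ exists. By Lemma~\ref{lem:sw}, since $a_p$ lies alone in its column, any $b$-element weakly right of the column of $a_p$ must share a column with a row $r$ element; in particular, no $b$-element can lie in the column just right of $a_m$, so $t = m - p$ always satisfies the defining condition, and a minimal such $t$ exists.

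Second, I would pin down the shape of the alignment in the relevant range. The minimality condition says that for each $i < t$ there is a $b_j >_P a_{p+i}$ in the column just right of $a_{p+i}$, and Lemma~\ref{lem:sw} forces this $b_j$ to share its column with an $a$-element. Combined with property (2) of Definition~\ref{defn: weak_align}, this shows that the columns of $a_p, a_{p+1}, \ldots, a_{p+t}$ are consecutive integers, and that the columns of $a_{p+1}, \ldots, a_{p+t}$ each contain exactly one additional $b$-element. Call these $b$-elements $b_{s+1} <_P \cdots <_P b_{s+t}$; they are consecutive in $P$-order within row $r+1$.

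Third, the weight calculation is a direct count: we move $t+1$ $a$-elements from row $r$ down and $t$ $b$-elements up, so $\wt(f_r(A)) - \wt(A) = -\alpha_r$, as claimed.

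The final and most delicate step is to verify that the new rows are chains in $P$. The ``left-side'' transitions $a_{p-1} <_P b_{s+1}$ and $b_s <_P a_p$ follow from Lemma~\ref{lem:col_ordering} applied to the uniquely occupied column of $a_p$, while the interior inequalities within the $b$-block and $a$-block are immediate. I expect the main obstacle to be the ``right-side'' transitions $b_{s+t} <_P a_{p+t+1}$ and $a_{p+t} <_P b_{s+t+1}$. I plan to handle these by case analysis on the column just right of $a_{p+t}$: when this column has a unique element (or is empty), a further appeal to Lemma~\ref{lem:col_ordering} suffices; when it contains both $a_{p+t+1}$ and another $b$-element, I would combine the minimality of $t$, property (3) of the weak alignment, and the $(3+1)$-free hypothesis on $P$ to rule out the incomparabilities that would otherwise break the chain.
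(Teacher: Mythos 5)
Your overall architecture matches the paper's proof of Proposition \ref{prop: lower_def} closely: existence of $t$ via Lemma \ref{lem:sw}, the consecutive-column picture of Fig. \ref{fig:lower}, the direct weight count, and the four boundary chain conditions. Your use of Lemma \ref{lem:col_ordering} for the transition $a_{p-1}<_P b_{s+1}$, and for both right-hand transitions when the column just right of $a_{p+t}$ is singly occupied, is a genuine (and correct) simplification: the paper instead routes these through property (3) of weak alignments, Lemma \ref{lem:sw}, and the $(3+1)$-free condition.

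The gap is in the sign of the inequality you extract from the choice of $t$, and it sits exactly under the one step you leave as a sketch. You assert that for each $i<t$ there is a $b_j>_P a_{p+i}$ one column right of $a_{p+i}$; this is what the printed definition literally says, but that definition contains a sign error (compare Lemma \ref{lem:flip_lower}, which reads off $a_{p+i}\not<_P b_{q+i}$ ``by choice of $a_{p+t}$''). The intended condition is that $t$ is minimal such that no $b_i\not>_P a_{p+t}$ lies one column right of $a_{p+t}$: for $i<t$ the $b$ one column right of $a_{p+i}$ is \emph{not} greater than $a_{p+i}$, while any $b$ one column right of $a_{p+t}$ \emph{is} greater than $a_{p+t}$. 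Under the reading you state, the proposition is actually false: when $b_{s+t+1}$ sits one column right of $a_{p+t}$, the defining condition of $t$ would give $b_{s+t+1}\not>_P a_{p+t}$, directly contradicting the transition $a_{p+t}<_P b_{s+t+1}$ required in the new row $r+1$ (a three-element poset with row $r$ equal to $\{x<_P z\}$ and row $r+1$ equal to $\{y\}$, with $y$ incomparable to both, already exhibits this). Likewise your $(3+1)$-free argument for $b_{s+t}<_P a_{p+t+1}$ needs the hypothesis $a_{p+t-1}\not<_P b_{s+t}$ --- minimality in the corrected direction --- combined with $b_{s+t}\not<_P a_{p+t}$ from property (3) and the chain $a_{p+t-1}<_P a_{p+t}<_P a_{p+t+1}$. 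With the signs corrected your plan closes and essentially coincides with the paper's argument; as written, the delicate case cannot be completed.
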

\begin{proof}
	Let $A\in\parrays{P}$ and let $a_1<_P\cdots<_Pa_m$ and $b_1<_P\cdots<_P b_n$ be the entries of rows $r$ and $r+1$ of $A$ respectively. Select $a_p$ as in the definition, assuming it exists. By Lemma \ref{lem:sw} every nonempty column strictly right of the column containing $a_p$ in the $r$ alignment must contain an entry in row $r$. In particular, there is no $b_i$ in any column strictly right of $a_m$ and it therefore makes sense to select $t\ge 0$ as in the definition. The entries $a_p,\ldots,a_{p+t}$ reside in consecutive columns of the $r$ alignment by Lemma \ref{lem:sw} and property (1) of weak $r$ alignments, and for each $0\le i< t$ there is some $b_j\not>_P a_{p+i}$ that shares a column with $a_{p+i+1}$ by choice of $t$. Let $q\ge 1$ be minimal such that either $b_q$ lies in a column strictly right of $a_p$ or does not exist. The relevant columns of the $r$ alignment are then as in Fig. \ref{fig:lower} (which degenerates to just the column containing $a_p$ if $t=0$).
	
	\begin{figure}[ht]
		\ytableausetup{mathmode, centertableaux,boxsize=2.5em}\begin{ytableau} 
			\scriptstyle a_p&\scriptstyle a_{p+1}&\scriptstyle a_{p+2}&\cdots&\scriptstyle a_{p+t}\\ 
			\none&\scriptstyle b_q&\scriptstyle b_{q+1}&\cdots&\scriptstyle b_{q+t-1}
		\end{ytableau} 
			
		\caption{\label{fig:lower}The columns in the alignment of $A$ whose entries are to be row swapped by $f_r$.}
	\end{figure}
	
	We must show
	\begin{enumerate}[(i)]
		\item $b_{q-1}<_P a_p$
		\item $a_{p+t}<_P b_{q+t}$ 
		\item $a_{p-1}<_P b_q$
		\item $b_{q+t-1}<_P a_{p+t+1}$
	\end{enumerate}
	whenever these indices are valid. The point being that (i) and (ii) show row $r+1$ of $f_r(A)$ is a chain, and (iii) and (iv) show row $r$ of $f_r(A)$ is a chain. When $t=0$ we need not consider (iii) or (iv).
	
	We have (i) immediately from Lemma \ref{lem:col_ordering}. If $b_{q+t}$ is one column right of $a_{p+t}$ then (ii) follows by choice of $a_{p+t}$. Otherwise, by Lemma \ref{lem:sw} there is some $a_{p+t+i}$ with $i>2$ that shares a column with $b_{q+t}$.  By (3) in Definition \ref{defn: weak_align}, $b_{q+t}\not<_P a_{p+t+i}$ so we get (ii) by the (3+1)-free condition.
	
	If $t\ne 0$ we know $a_{p+t-1}\not<_P b_{q+t-1}$ so (iv) follows from the (3+1)-free condition. By property (3) in Definition \ref{defn: weak_align} we have $b_q\not<_P a_{p+1}$ so (iii) follows from the (3+1)-free condition.
	
	Fig. \ref{fig:lower} makes clear that $\wt(f_r(A))=\wt(A)-\alpha_r$.
\end{proof}

\begin{lemma}\label{lem:flip_lower}
	Let $A$ be a $P$-array. Let $a_p<_P\cdots<_Pa_{p+t}$ and $b_q<_P\cdots<_Pb_{q+t-1}$ be the elements whose rows are swapped by $f_r$ as in Fig. \ref{fig:lower}. Then $a_{p+i+1}\parallel b_{q+i}$ for each $0\le i<t$.
\end{lemma}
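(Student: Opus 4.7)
The plan is to establish $a_{p+i+1}\parallel b_{q+i}$ by separately showing $a_{p+i+1}\not>_P b_{q+i}$ and $a_{p+i+1}\not<_P b_{q+i}$.

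The inequality $a_{p+i+1}\not>_P b_{q+i}$ is immediate. By Fig.~\ref{fig:lower}, the elements $a_{p+i+1}$ and $b_{q+i}$ share a column in the $r$ alignment, so if $b_{q+i}<_P a_{p+i+1}$ held, Property (3) of Definition~\ref{defn: weak_align} would require $b_{q+i}$ to lie in a strictly left column, a contradiction.

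For $a_{p+i+1}\not<_P b_{q+i}$, first I extract from the minimality of $t$ the fact that $b_{q+i}>_P a_{p+i}$: since $i<t$, the column immediately right of $a_{p+i}$ must contain some $b$-element exceeding $a_{p+i}$ in $P$, and by the structure in Fig.~\ref{fig:lower} this element is necessarily $b_{q+i}$ (being the unique $b$-element in that column). Now assume for contradiction $a_{p+i+1}<_P b_{q+i}$; combined with $a_{p+i}<_P a_{p+i+1}$, this yields a $3$-chain $a_{p+i}<_P a_{p+i+1}<_P b_{q+i}$ in $P$. The plan is to derive a contradiction either by producing a fourth element incomparable to all three members of this chain (thereby contradicting the $(3+1)$-free hypothesis on $P$) or by exhibiting a weak $r$ alignment strictly smaller than $\phi$ under $\preceq$, contradicting the minimality established in Proposition~\ref{prop:algn_char}.

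The main obstacle is the second direction. Natural candidates for the ``isolated'' fourth element include $b_{q+i-1}$ when $i\ge 1$ or $a_{p+i+2}$ when $i+1<t$, whose relations to the three-chain are constrained by Properties (3) and (4) of weak $r$ alignments and the choice of $t$. The alternative minimality approach would involve shifting a carefully chosen subset of elements in Fig.~\ref{fig:lower} one column to the left (for instance, shifting $b_q,\dots,b_{q+i}$ leftward by one) and verifying all four axioms of a weak $r$ alignment; Property (4) is typically the most delicate axiom to satisfy in this shifted configuration. Both approaches require handling the boundary cases $i=0$ and $i=t-1$ separately, since the neighboring structural elements whose existence underlies the argument may be absent there.
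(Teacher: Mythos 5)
Your first half is exactly the paper's argument: $a_{p+i+1}\not>_P b_{q+i}$ follows from property (3) of Definition \ref{defn: weak_align} because the two elements share a column. The problem is the second half, where you never actually complete the proof: you announce a ``plan'' to contradict either the (3+1)-free hypothesis or the minimality in Proposition \ref{prop:algn_char}, list candidate fourth elements and candidate leftward shifts, and acknowledge that the delicate verifications remain to be done. As written, $a_{p+i+1}\not<_P b_{q+i}$ is not established.

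The gap stems from how you read the choice of $t$. You extract ``$b_{q+i}>_P a_{p+i}$ for $i<t$,'' which is what the definition of $f_r$ literally says, but it is inconsistent with how that choice is used everywhere else in the paper: the proof of Proposition \ref{prop: lower_def} states that ``for each $0\le i<t$ there is some $b_j\not>_P a_{p+i}$ that shares a column with $a_{p+i+1}$ by choice of $t$,'' and deduces (ii) $a_{p+t}<_P b_{q+t}$ from the stopping condition at $t$ --- so the intended condition is that $t$ is minimal with no $b_i\not>_P a_{p+t}$ one column to the right (indeed, under your literal reading the case $t=0$ could leave row $r+1$ of $f_r(A)$ failing to be a chain, so the operator would not even be well defined). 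With the intended reading you get $a_{p+i}\not<_P b_{q+i}$ for $0\le i<t$, and the second direction is then a one-line transitivity argument: if $a_{p+i+1}<_P b_{q+i}$ then $a_{p+i}<_P a_{p+i+1}<_P b_{q+i}$, contradicting $a_{p+i}\not<_P b_{q+i}$. No appeal to (3+1)-freeness or to the minimality of the alignment is needed, and the boundary cases $i=0$, $i=t-1$ require no separate treatment.
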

\begin{proof}
	We have $a_{p+i}\not<_P b_{q+i}$ by choice of $a_{p+t}$, thus $a_{p+i+1}\not<_Pb_{q+i}$. Since $a_{p+i+1}$ shares a column with $b_{q+i}$ in the $r$ alignment, we have $a_{p+i+1}\not>_Pb_{q+i}$ by property (3) in Definition \ref{defn: weak_align}.
\end{proof}

As with the crystal lowering operator, we will soon define the crystal raising operator using the $r$ alignment of a $P$-array. To eventually see that these two operations are inverses, we then want to know how $f_r$ affects the $r$ alignment. The answer is given by the following lemma.
\begin{lemma}\label{lem: fcol}
	Applying $f_r$ to a $P$-array $A$ does not change the column of any entry in the $r$ alignment.
\end{lemma}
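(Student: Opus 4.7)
The plan is to show that if $\phi$ is the $r$ alignment of $A$, then the map $\phi'$ obtained from $\phi$ by keeping each element in its column and reassigning rows to match $f_r(A)$ is itself the $r$ alignment of $f_r(A)$. I would exploit the static characterization of Proposition \ref{prop:algn_char}: it suffices to show that $\phi'$ is the unique minimal weak $r$ alignment of $f_r(A)$ under $\preceq$.

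First I would verify $\phi'$ satisfies the four conditions of Definition \ref{defn: weak_align} applied to $f_r(A)$. Conditions (2) and (4) depend only on columns, so they transfer immediately from $\phi$ to $\phi'$. For (1) the only non-obvious consecutive pairs in the new row chains are the transitions at the boundaries of the swapped region, namely $a_{p-1}$ adjacent to $b_q$ and $b_{q+t-1}$ adjacent to $a_{p+t+1}$ in row $r$, and $b_{q-1}$ adjacent to $a_p$ and $a_{p+t}$ adjacent to $b_{q+t}$ in row $r+1$; these can be handled using the inequalities (i)--(iv) from the proof of Proposition \ref{prop: lower_def} combined with the explicit column layout of Figure \ref{fig:lower}. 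For (3) I would case on whether $x$ and $y$ are swapped: if at most one is swapped the desired column inequality follows from the corresponding axiom for $\phi$, while if both are swapped (so $x = a_k$ has moved to row $r+1$ and $y = b_j$ has moved to row $r$, with $a_k <_P b_j$), Lemma \ref{lem:flip_lower} together with the (3+1)-free hypothesis forces the column inequality.

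To upgrade $\phi'$ from a weak alignment to \emph{the} $r$ alignment, let $\psi$ denote the actual $r$ alignment of $f_r(A)$; Proposition \ref{prop:algn_char} gives $\psi \preceq \phi'$, so it remains to prove $\phi' \preceq \psi$. My plan is to define $\psi^\leftarrow$ as the map on $A$ with the same columns as $\psi$ but with rows as in $A$, and to verify that $\psi^\leftarrow$ is itself a weak $r$ alignment of $A$. Minimality of $\phi$ would then yield $\phi \preceq \psi^\leftarrow$, and because the row-swap preserves columns this is equivalent to $\phi' \preceq \psi$.

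The main obstacle is this last step. Conditions (2) and (4) are again automatic, and the row-transitions within $\psi^\leftarrow$ that go from row $r+1$ back to row $r$ in $A$ are handled by condition (3) applied to $\psi$; the sticking point is the reverse direction, which amounts to showing that in $\psi$ the column of $a_{p-1}$ is strictly less than the column of $a_p$ (and symmetrically for $b_{q+t-1}$ and $b_{q+t}$). The key is to route through the swapped neighbors: the inequalities $a_{p-1} <_P b_q$ and $b_{q-1} <_P a_p$ from (iii) and (i) of Proposition \ref{prop: lower_def}, combined with properties (1), (3), and (4) applied to $\psi$ and a (3+1)-free argument on configurations like $\{a_{p-1}, a_p, b_{q-1}, b_q\}$, should force the required column inequality. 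This is the most delicate part of the argument.
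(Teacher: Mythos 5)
Your overall architecture is sound and genuinely different from the paper's in its second half. The first half (verifying that the column-preserving row-swap $\phi'$ satisfies conditions (1)--(4) of Definition \ref{defn: weak_align} for $f_r(A)$, with Lemma \ref{lem:flip_lower} handling the doubly-swapped case of (3)) matches the paper. For minimality, the paper proves directly, by an element-by-element induction from the right plus an auxiliary map $\xi$, that any weak $r$ alignment $\psi\preceq\phi'$ equals $\phi'$; you instead transport the true alignment $\psi$ of $f_r(A)$ back to $A$ via $\psi^\leftarrow$ and invoke minimality of the alignment of $A$. That reduction is logically valid (the poset of weak alignments is finite, so the unique minimal element is a minimum, and $\phi\preceq\psi^\leftarrow$ is indeed equivalent to $\phi'\preceq^\ast\psi$), and if it closes it is arguably cleaner than the paper's argument.

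The gap is at the step you yourself flag as delicate, and the tactic you propose there does not work. A local $(3+1)$-free argument on $\{a_{p-1},a_p,b_{q-1},b_q\}$ together with (1), (3), (4) for $\psi$ only yields that $b_{q-1}$ lies strictly left of $a_p$ and $a_{p-1}$ strictly left of $b_q$ in $\psi$; neither inequality bounds the column of $a_p$ from below relative to $a_{p-1}$, so nothing prevents (at this level of information) $a_p$ from sliding left past $a_{p-1}$. What is actually needed is a global pinning argument, which is precisely the first move of the paper's own proof: by Lemma \ref{lem:sw} and the choice of $p$, the chain $b_1<_P\cdots<_Pb_{q-1}<_Pa_p<_P\cdots<_Pa_m$ occupies consecutive columns $1,2,\ldots$ in the alignment of $A$, hence in $\phi'$; along this chain each link stays in one row of $f_r(A)$ except the single link $a_{p+t}\to a_{p+t+1}$, which is covered by (3), so properties (1) and (3) force strictly increasing columns in $\psi$; combined with $\psi\preceq\phi'$ this fixes every element of the chain at its $\phi'$ column. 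Then $a_p$ sits in column $q$ of $\psi$ while $a_{p-1}$ sits in a column $\le q-1$, and the second sticking point follows because $a_{p+t}$ is pinned and precedes $b_{q+t}$ in row $r+1$ of $f_r(A)$. Note also that you verify only condition (1) for $\psi^\leftarrow$; condition (3) has its own nontrivial case (a swapped $b_{q+i}$ in row $r$ of $f_r(A)$ with $b_{q+i}<_Pa_{p+k}$ for a swapped $a_{p+k}$, which by Lemma \ref{lem:flip_lower} forces $k\ge i+2$), though it too is dispatched by the same pinning. With the pinning inserted your proof closes; without it the decisive inequalities are unproved.
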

\begin{proof}
	
	Let $a_1<_P a_2<_P\cdots<_Pa_m$ and $b_1<_Pb_2<_P\cdots<_P b_n$ be the entries of rows $r$ and $r+1$ of $A$ respectively. Proposition \ref{prop: lower_def} demonstrates that the entries of rows $r$ and $r+1$ in $f_r(A)$ are respectively given by\[a_1,\ldots,a_{p-1}, b_q,\ldots, b_{q+t-1}, a_{p+t+1},\ldots, a_m\] and \[ b_1,\ldots, b_{q-1},a_p,\ldots, a_{p+t},b_{q+t},\ldots, b_n\] for $p$ and $t$ as in the definition of $f_r$, and $q\ge 1$ minimal such that $b_q$ either does not exist or lies in a column strictly right of $a_p$ in the $r$ alignment of $A$. Define
	\[
	\phi:\{a_1,\ldots,a_m,b_1,\ldots,b_n \}\to\{r,r+1\}\times\Z_{+}
	\]
	so that it agrees with the $r$ alignment, except on $a_p,\ldots a_{p+t}$ which are sent to row $r+1$, and $b_q,\ldots,b_{q+t-1}$ which are sent to row $r$, without changing column assignments. This amounts to swapping the rows in Fig. \ref{fig:lower}. We must show that $\phi$ is the $r$ alignment of $f_r(A)$.
	
	We see that $\phi$ inherits from the $r$ alignment of $A$ properties (1), (2), and (4) in Definition \ref{defn: weak_align}. The only entries in row $r$ of $\phi$ weakly left of some $b_i$ with $1\le i< q$ are among $a_1,\ldots, a_{p-1}$. All these entries occupy the same position in the $r$ alignment of $A$ so cannot violate (3). For $q+t\le i\le n$, the only entries in row $r$ of $\phi$ left of $b_i$ that do not also occupy row $r$ in the $r$ alignment of $A$ are $b_j$ for $q\le j< q+t$ which are less than $b_i$ and therefore do not violate (3) in Definition \ref{defn: weak_align}. The only elements of row $r$ left of $a_p$ in $\phi$ are $a_i$ that are lesser than $a_p$. Each $a_{p+i}$ for $1\le i\le t$ shares a column with an incomparable element by Lemma \ref{lem:flip_lower}, so cannot violate (3). So $\phi$ is a weak $r$ alignment of $f_r(A)$.
	
	Suppose we have a weak $r$ alignment $\psi$ of $f_r(A)$ such that $\psi\preceq \phi$. We will show $\psi=\phi$. Since $a_p$ is the leftmost element in row $r$ of the $r$ alignment of $A$ that does not share a column with an entry in row $r+1$, we know from Lemma \ref{lem:sw} that $b_1,\ldots, b_{q-1},a_p,\ldots,a_m$ occupy adjacent columns in the $r$ alignment of $A$, hence in $\phi$. Properties (1) and (3) of weak $r$ alignments force each element in this chain to be strictly right of the previous element in $\psi$, so the elements of this chain occupy the same columns in $\psi$ and $\phi$.
	
	Take $b_i$ for $q+t\le i\le n$ and suppose $b_j$ occupies the same column in $\psi$ and $\phi$ for any $j>i$. The only elements greater than $b_i$ are those $b_j$, and possibly some $a_k$ for $k> p+t$. We have assumed that each such greater element occupies the same column in $\psi$ and $\phi$. Recalling that $a_p$ is the leftmost element in row $r$ of the $r$ alignment of $A$ that does not share a column with an entry in row $r+1$, Lemma \ref{lem:sw} precludes $b_i$ from being strictly right of $a_m$ in the $r$ alignment, hence in $\phi$. Then in $\phi$, (3) and (4) place $b_i$ one column left of the leftmost entry greater than $b_i$. Property (4) forces $b_i$ to occupy the same position in $\psi$.
	
	Since by choice of $b_q$ we have $a_{p+i}\not<_P b_{q+i}$ for each $0\le i<t$, we know $b_{q+i}<_P a_{p+i+2}$ by the (3+1)-free condition (if $a_{p+i+2}$ exists). We also have $b_{q+i}\parallel a_{p+i+1}$ by Lemma \ref{lem:flip_lower}. This means that in $\phi$, each such $b_{q+i}$ is one column left of the leftmost entry greater than $b_{q+i}$ if such an entry exists. We have already determined that $\phi$ and $\psi$ agree on the positions of entries greater than any of $b_q,\ldots, b_{q+t-1}$, other than these entries themselves. Now properties (1) and (4) of weak $r$ alignments prohibit $\psi$ from placing any of these $b_{q+i}$ strictly left of its position in $\phi$.
	
	It remains only to show that $\phi$ and $\psi$ agree on $a_1,\ldots, a_{p-1}$. Define \[\xi:\{ a_1,\ldots, a_m,b_1,\ldots,b_n \}\to\{r,r+1\}\times\Z_{+}\] to agree with the $r$ alignment of $A$ (hence the column assignments of $\phi$), except on the positions of $a_1,\ldots, a_{p-1}$ where it instead agrees with $\psi$. We know $\xi$ places each $a_i$ left of $a_{i+1}$ so it satisfies (1). Column assignments in $\psi$ all agree with $\xi$ which therefore satisfies (2) and (4).
	
	If $b_j<_P a_i$ for some $p\le i\le m$ and $1\le j\le n$ then the positions of these entries in $\xi$ agrees with the $r$ alignment of $A$ and therefore they do not violate (3). For $1\le i<p$ we can only have $b_j<_P a_i$ for some $j<q$. The positions of these elements in $\xi$ agree with $\psi$ so cannot violate (3) either. Then $\xi$ is a weak $r$ alignment that precedes the $r$ alignment of $A$ according $\preceq$. Then $\xi$ is, in fact, the $r$ alignment of $A$ by Proposition \ref{prop:algn_char}. In particular, for $i<p$ we have $\psi(a_i)=\xi(a_i)=\phi(a_i)$. Therefore $\phi=\psi$ so $\phi$ is the alignment of $f_r(A)$ by Proposition \ref{prop:algn_char}.
\end{proof}

We next give a series of analogous statements for the crystal raising operator. The proofs are mostly, but not entirely, symmetric.
\begin{definition}
	The \textbf{crystal raising operator} $e_r:\parrays{P}\to\parrays{P}\cup\{0\}$ acts on $A\in\parrays{P}$ as follows. Let $a_1<_P\cdots<_Pa_m$ and $b_1<_P\cdots<_P b_n$ be the entries of rows $r$ and $r+1$ of $A$ respectively.
	\begin{itemize}
		\item If every column of the $r$ alignment with an entry in row $r+1$ also contains an entry in row $r$ then define $e_r(A)=0$.
		\item Otherwise let $p$ be maximal such that $b_p$ does not share a column with an entry in row $r$ in the $r$ alignment. Let $t\ge 0$ be minimal such that there is no $a_i>_Pb_{p+t}$ one column right of $b_{p+t}$. Then we move $b_p,\ldots, b_{p+t}$ to row $r$, and any $a_i$ that shares a column with one of these entries to row $r+1$.
	\end{itemize}
\end{definition}

\begin{proposition}\label{prop: raise_def}
	The crystal raising operator $e_r$ is well-defined, and if we have $A\in\parrays{P}$ with $e_r(A)\ne 0$ then $\wt(e_r(A))=\wt(A)+\alpha_r$.
\end{proposition}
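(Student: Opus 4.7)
The plan is to mirror the proof of Proposition \ref{prop: lower_def} with the roles of rows $r$ and $r+1$ swapped. Let $A \in \parrays{P}$ with $e_r(A) \neq 0$, and let $a_1 <_P \cdots <_P a_m$ and $b_1 <_P \cdots <_P b_n$ be the entries of rows $r$ and $r+1$. Fix $b_p$ and $t$ as in the definition of $e_r$, and let $q \ge 1$ be minimal such that either $a_q$ lies in a column strictly right of $b_p$ in the $r$ alignment, or no such $a_q$ exists.

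First I establish the structural claim analogous to Fig.~\ref{fig:lower} with its two rows transposed: $b_p, b_{p+1}, \ldots, b_{p+t}$ occupy consecutive columns $c, c+1, \ldots, c+t$, with $b_p$ alone in column $c$ and each $b_{p+i}$ paired with $a_{q+i-1}$ in column $c+i$; moreover $a_{q+t}$ (when it exists) lies in column $c+t+1$. Unlike in Proposition \ref{prop: lower_def}, where the analogous contiguity follows from Lemma \ref{lem:sw} alone, here the argument requires property~(4) of weak $r$ alignments: if column $c+i+1$ contained only a single $a$, that $a$ would be forced to be $>_P b_{p+i}$ by property~(4), contradicting the extension of the sequence past step $i$ by the minimality of $t$; hence column $c+i+1$ must also contain $b_{p+i+1}$.

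I then verify the four chain conditions needed for $e_r(A)$ to be a valid $P$-array: (i) $a_{q-1} <_P b_p$, (ii) $b_{p+t} <_P a_{q+t}$, (iii) $b_{p-1} <_P a_q$, and (iv) $a_{q+t-1} <_P b_{p+t+1}$, each required only when the relevant index refers to an existing element. Conditions~(i) and~(iii) follow from Lemma \ref{lem:col_ordering} applied to column $c$. For~(ii), since $a_{q+t}$ lies in column $c+t+1$, the minimality of $t$ yields $a_{q+t} >_P b_{p+t}$. For~(iv), with $t \ge 1$, the minimality of $t$ gives $b_{p+t-1} \not<_P a_{q+t-1}$, and property~(3) of weak alignments rules out $b_{p+t} <_P a_{q+t-1}$ and $b_{p+t+1} <_P a_{q+t-1}$; the $(3+1)$-free condition applied to the chain $b_{p+t-1} <_P b_{p+t} <_P b_{p+t+1}$ together with $a_{q+t-1}$ then forces $a_{q+t-1} <_P b_{p+t+1}$.

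The weight change $\wt(e_r(A)) = \wt(A) + \alpha_r$ is immediate from the figure, since exactly $t+1$ entries move from row $r+1$ to row $r$ while only $t$ move in the reverse direction. The hard part will be the contiguity step: in the proof of Proposition \ref{prop: lower_def} this was a direct corollary of Lemma \ref{lem:sw}, but for $e_r$ the analogous gap argument fails because unpaired $a$'s may legitimately lie to the right of unpaired $b$'s, so property~(4) and the minimality of $t$ must be used in tandem to rule out a hypothetical column containing only an unpaired $a$ sandwiched between consecutive entries of the sequence $b_p, b_{p+1}, \ldots, b_{p+t}$.
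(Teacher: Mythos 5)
Your proposal is correct and follows the same architecture as the paper's proof: establish the contiguous block of columns pictured in Fig.~\ref{fig:raise}, verify the four junction conditions (i)--(iv), and read off the weight change. Two of your micro-arguments differ from the paper's, and both are valid. For (iii), the paper first shows $a_q\not<_P b_{p+1}$ (otherwise $a_q$ could be shifted left, contradicting Proposition~\ref{prop:algn_char}) and then applies the $(3+1)$-free condition to the chain $b_{p-1}<_P b_p<_P b_{p+1}$; your route --- $b_{p-1}$ lies strictly left and $a_q$ strictly right of the column containing only $b_p$, so Lemma~\ref{lem:col_ordering} gives $b_{p-1}<_P a_q$ directly --- is shorter and avoids Proposition~\ref{prop:algn_char} entirely (it is the same mechanism the paper already uses for (i)). For the contiguity of $b_p,\ldots,b_{p+t}$, the paper appeals to the choice of $b_p$ and property~(3) of weak $r$ alignments, whereas you use property~(4) to rule out a column occupied by a lone $a_j\not>_P b_{p+i}$; both work, and your diagnosis that the Lemma~\ref{lem:sw} argument from Proposition~\ref{prop: lower_def} does not transfer (unpaired row-$r$ entries may sit to the right of $b_p$) is accurate. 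The one omission is that well-definedness also requires $t$ to exist at all: you fix ``$t$ as in the definition'' without checking that the stopping condition is ever met. The paper settles this in one line by noting that any $a_i$ in the column immediately right of $b_n$ sits alone in its column, so Lemma~\ref{lem:col_ordering} forces $b_n<_P a_i$ and the condition holds by $t=n-p$ at the latest; you should add the analogous sentence.
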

\begin{proof}
	Let $A\in\parrays{P}$ and let $a_1<_P\cdots<_Pa_m$ and $b_1<_P\cdots<_P b_n$ be the entries of rows $r$ and $r+1$ of $A$ respectively. Select $b_p$ as in the definition, assuming it exists. If the column immediately right of $b_n$ in the $r$ alignment contains some $a_i$, then $b_n<_P a_i$ by Lemma \ref{lem:col_ordering}, and it therefore makes sense to select $t\ge 0$ as in the definition of $e_r$. The entries $b_p,\ldots,b_{p+t}$ reside in consecutive columns of the $r$ alignment by choice of $b_p$ and property (3) of weak $r$ alignments, and for each $0\le i<t$ there is some $a_j\not>_P b_{p+i}$ that shares a column with $b_{p+i+1}$. Let $q\ge 1$ be minimal such that either $a_q$ lies in a column strictly right of $b_p$ or does not exist. The relevant columns of the $r$ alignment are then as shown in Fig. \ref{fig:raise} (which degenerates to just the column containing $b_p$ if $t=0$).
	
	\begin{figure}[ht]
		\ytableausetup{mathmode, centertableaux,boxsize=2.5em}\begin{ytableau} 
			\none&\scriptstyle a_q&\scriptstyle a_{q+1}&\cdots&\scriptstyle a_{q+t-1}\\
			\scriptstyle b_p&\scriptstyle b_{p+1}&\scriptstyle b_{p+2}&\cdots&\scriptstyle b_{p+t}\
		\end{ytableau} 
			
			\caption{\label{fig:raise}The columns in the alignment of $A$ whose elements are to be row swapped by $e_r$.}
		\end{figure}
	
	To show that both rows of $e_r(A)$ are chains, it suffices to show
	\begin{enumerate}[(i)]
		\item $a_{q-1}<_P b_p$
		\item $b_{p+t}<_P a_{q+t}$ 
		\item $b_{p-1}<_P a_q$
		\item $a_{q+t-1}<_P b_{p+t+1}$
	\end{enumerate}
	whenever these indices are valid. When $t=0$ we need not consider (iii) or (iv).
	
	We have (i) immediately from Lemma \ref{lem:col_ordering}. By choice of $b_p$ we know that $a_{q+t}$, if it exists, lies one column right $b_{p+t}$. Then (ii) follows by choice of $b_{p+t}$.
	
	If $t\ne 0$ we have $b_{p+t-1}\not<_P a_{q+t-1}$ so (iv) follows from the (3+1)-free condition. We also know $b_p\not<_P a_q$ so we must have $a_q\not<_Pb_{p+1}$ or we could violate the characterization of the $r$ alignment of $A$ given in Proposition \ref{prop:algn_char} by shifting $a_q$ one position to the left. Thus (iii) follows from the (3+1)-free condition.
	
	Fig. \ref{fig:raise} makes clear that $\wt(e_r(A))=\wt(A)+\alpha_r$.
\end{proof}

\begin{lemma}\label{lem:flip_raise}
	Let $A$ be a $P$-array. Let $b_p<_P\cdots<_Pb_{p+t}$ and $a_q<_P\cdots<_Pa_{q+t-1}$ be the elements whose rows are swapped by $e_r$ as in Fig. \ref{fig:raise}. Then $b_{p+i+1}\parallel a_{q+i}$ for each $0\le i<t$.
\end{lemma}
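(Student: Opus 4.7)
The plan is to show the two noncomparabilities $b_{p+i+1} \not<_P a_{q+i}$ and $b_{p+i+1} \not>_P a_{q+i}$. The first is a direct application of property~(3) in Definition~\ref{defn: weak_align}: since $b_{p+i+1}$ lies in row $r+1$, $a_{q+i}$ lies in row $r$, and they share a column in the $r$ alignment of $A$ by Figure~\ref{fig:raise}, any strict inequality $b_{p+i+1}<_P a_{q+i}$ would force $b_{p+i+1}$ strictly left of $a_{q+i}$, which is impossible.

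For the second noncomparability the argument departs from the pattern of Lemma~\ref{lem:flip_lower}, because the analogous chain-plus-minimality reasoning (using $b_{p+i}<_P b_{p+i+1}$ together with $b_{p+i} \not<_P a_{q+i}$ from the choice of $b_{p+t}$) only reproves $b_{p+i+1} \not<_P a_{q+i}$, not the direction needed. Instead I would adapt the shift argument used for (iii) in the proof of Proposition~\ref{prop: raise_def}, but now shift several $a$'s simultaneously. Suppose for contradiction that $a_{q+i} <_P b_{p+i+1}$, and define $\psi$ to agree with the $r$ alignment except that each of $a_q, a_{q+1}, \ldots, a_{q+i}$ is moved one column to the left. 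The shifts are mutually consistent: in the $r$ alignment $a_{q+k}$ shares a column with $b_{p+k+1}$, so after the shift it shares a column with $b_{p+k}$, and its new column is vacated in row~$r$ by $a_{q+k-1}$ moving further left (for $k=0$, the new column originally contained $b_p$ alone).

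I would then verify $\psi$ is a weak $r$ alignment. Conditions~(1) and~(2) hold by inspection. For condition~(3), the only new column-sharing pairs are $(b_{p+k}, a_{q+k})$ for $0 \le k \le i$, and each is acceptable because $b_{p+k}\not<_P a_{q+k}$ by the minimality of $t$. For condition~(4), the shifted $a_{q+k}$ with $k < i$ has $a_{q+k+1}$ (a greater element) immediately to its right; for $k=i$, the column to the right of $a_{q+i}$'s new position contains only $b_{p+i+1}$, and condition~(4) forces $b_{p+i+1}>_P a_{q+i}$, which is precisely our contradiction hypothesis. All other checks reduce to conditions already valid in the original $r$ alignment. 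Thus $\psi$ is a weak $r$ alignment with $\psi\prec$(the $r$ alignment), contradicting Proposition~\ref{prop:algn_char}, and therefore $a_{q+i} \not<_P b_{p+i+1}$. The main obstacle is the case analysis of condition~(3) for all pairs of shifted and unshifted elements; isolating condition~(4) for $a_{q+i}$ as the single constraint that activates the hypothesis is the key observation that makes the contradiction go through.
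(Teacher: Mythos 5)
Your proposal is correct and follows essentially the same route as the paper: one direction is property (3) of weak $r$ alignments applied to the shared column, and the other is the contradiction obtained by shifting $a_q,\ldots,a_{q+i}$ one column left and invoking the minimality of the $r$ alignment from Proposition \ref{prop:algn_char}. The paper states this shift argument in one sentence; your verification of conditions (1)--(4), and in particular the observation that condition (4) at $a_{q+i}$ is where the hypothesis $a_{q+i}<_P b_{p+i+1}$ is consumed, just fills in the details the paper leaves implicit.
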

\begin{proof}
	We have $b_{p+i}\not<_P a_{q+i}$ by choice of $b_{p+t}$. Were we to have $a_{q+i}<_P b_{p+i+1}$, we could shift $a_q,\ldots, a_{q+i}$ left one position in the $r$ alignment of $A$ and we would still have a weak $r$ alignment, contradicting Proposition \ref{prop:algn_char}.
	
	On the other hand, $a_{q+i}\not>_P b_{p+i+1}$ by property (3) of weak $r$ alignments.
\end{proof}

\begin{lemma}\label{lem: ecol}
	Applying $e_r$ to a $P$-array $A$ does not change the column of any element in the $r$ alignment.
\end{lemma}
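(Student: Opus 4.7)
The plan is to mirror the proof of Lemma \ref{lem: fcol} almost verbatim, replacing $f_r$ with $e_r$ and using Proposition \ref{prop: raise_def} and Lemma \ref{lem:flip_raise} in place of Proposition \ref{prop: lower_def} and Lemma \ref{lem:flip_lower}. Let $a_1<_P\cdots<_P a_m$ and $b_1<_P\cdots<_P b_n$ be the entries of rows $r$ and $r+1$ of $A$, select $b_p$ and $t$ as in the definition of $e_r$, and let $q\ge 1$ be minimal such that $a_q$ is absent or strictly right of $b_p$ in the $r$ alignment of $A$. By Proposition \ref{prop: raise_def}, the rows of $e_r(A)$ are given explicitly, and I define $\phi$ to coincide with the $r$ alignment of $A$ in every column assignment, but with the rows of $b_p,\ldots,b_{p+t}$ and $a_q,\ldots,a_{q+t-1}$ swapped (i.e.\ swap rows in Fig.\ \ref{fig:raise}). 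The goal is to show $\phi$ is the $r$ alignment of $e_r(A)$, which by Proposition \ref{prop:algn_char} reduces to proving $\phi$ is a weak $r$ alignment of $e_r(A)$ and is $\preceq$-minimal.

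For the weak-alignment check, properties (1), (2), (4) of Definition \ref{defn: weak_align} are inherited from the $r$ alignment of $A$ since $\phi$ only permutes rows without moving columns. Property (3) is the substantive check: the new row-$r$ entries are the $b_{p+i}$ (which are $\le_P b_{p+t}<_P a_{q+t}$ and strictly right of $a_{q-1}$ anyway), and the newly lowered row-$(r+1)$ entries are the $a_{q+i}$, which by Lemma \ref{lem:flip_raise} are incomparable with the $b_{p+i+1}$ sharing their column, and the relations with older $b_j$ or $a_k$ all occur in positions already appearing in the $r$ alignment of $A$ and so satisfy (3) there.

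For minimality, suppose $\psi$ is a weak $r$ alignment of $e_r(A)$ with $\psi\preceq\phi$. The idea is to walk through the entries from right to left, just as in the proof of Lemma \ref{lem: fcol}, but with the roles of rows $r$ and $r+1$ interchanged. First, by the definition of $b_p$ and Lemma \ref{lem:sw}, the entries $a_1,\ldots,a_{q-1},b_p,b_{p+1},\ldots,b_n$ lie in consecutive columns of $\phi$, and properties (1) and (3) force $\psi$ to give them the same columns. Next, one inducts downward on the entries $a_i$ with $i\ge q+t$: using that every element greater than $a_i$ has already been pinned down and that $a_i$ cannot be strictly right of $b_n$ (Lemma \ref{lem:sw}), properties (3) and (4) force $a_i$ into the column immediately left of the leftmost strictly greater entry, matching $\phi$. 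For the freshly lifted $a_{q+i}$, Lemma \ref{lem:flip_raise} combined with the (3+1)-free condition (which gives $a_{q+i}<_P b_{p+i+2}$ when the latter exists) shows that $a_{q+i}$ in $\phi$ is one column left of the leftmost entry greater than it, and (1) and (4) prevent $\psi$ from placing it strictly further left.

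The one remaining step, and the main obstacle, is showing $\phi$ and $\psi$ agree on $b_1,\ldots,b_{p-1}$. Here I would run the same auxiliary-map trick as in the proof of Lemma \ref{lem: fcol}: define $\xi$ on $A$ by taking the $r$ alignment of $A$ and moving only $b_1,\ldots,b_{p-1}$ to the columns they occupy in $\psi$. Properties (1), (2), (4) are immediate; for (3) one checks that the only relations $b_j<_P a_i$ involving the shifted entries have $i<q$, and the positions of those $a_i$ agree between $\xi$ and $\psi$, while every other relation is unchanged from the $r$ alignment of $A$. Thus $\xi$ is a weak $r$ alignment of $A$ with $\xi\preceq$ the $r$ alignment of $A$, so by Proposition \ref{prop:algn_char} they are equal, giving $\psi(b_j)=\phi(b_j)$ for $j<p$. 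This yields $\psi=\phi$, and a second application of Proposition \ref{prop:algn_char} identifies $\phi$ as the $r$ alignment of $e_r(A)$.
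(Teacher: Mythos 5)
Your setup (the map $\phi$ obtained by swapping the rows in Fig.~\ref{fig:raise}, the verification that $\phi$ is a weak $r$ alignment, and the closing $\xi$-trick) matches the paper's strategy, but the first step of your minimality argument contains a genuine gap. You claim that $a_1,\ldots,a_{q-1},b_p,b_{p+1},\ldots,b_n$ lie in consecutive columns of $\phi$ and that properties (1) and (3) pin down their columns in any $\psi\preceq\phi$. Neither part is true in general. Weakly left of $b_p$ every nonempty column contains some $b_j$, but it need not contain any $a_i$ (there can be unpaired $b_j$ with $j<p$), so $a_1,\ldots,a_{q-1}$ can skip columns; symmetrically, strictly right of $b_p$ there can be columns containing only an $a_i$, so $b_{p+1},\ldots,b_n$ can skip columns as well. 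Concretely, take row $r=\{a_1<_Pa_2\}$, row $r+1=\{b_1<_P\cdots<_Pb_4\}$ with $a_1\parallel b_1$, $a_2\parallel b_3$ and all other pairs comparable: the $r$ alignment puts $(a_1,b_1)$ in column $1$, $b_2$ alone in column $2$, $(a_2,b_3)$ in column $3$, $b_4$ alone in column $4$, so your chain $a_1,a_2,b_4$ occupies columns $1,3,4$. Moreover, even where your chain is consecutive, the link from $b_{p+t}$ to $b_{p+t+1}$ crosses from row $r$ down to row $r+1$ of $e_r(A)$, and neither (1) (different rows) nor (3) (which only constrains a row-$(r+1)$ element \emph{below} a row-$r$ element) forces $b_{p+t+1}$ strictly right of $b_{p+t}$ in $\psi$. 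So this step does not pin anything down.

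The paper avoids this by using the chain $b_1,\ldots,b_{p+t},a_{q+t},\ldots,a_m$, which genuinely occupies consecutive columns starting at column $1$ (by the choice of $b_p$ and Lemma~\ref{lem:sw}) and along which every consecutive pair is forced rightward by (1) or by (3) together with $b_{p+t}<_Pa_{q+t}$. It then pins $a_q,\ldots,a_{q+t-1},b_{p+t+1},\ldots,b_n$ by a right-to-left induction using (3) and (4), and finally applies the $\xi$-trick to $a_1,\ldots,a_{q-1}$ --- not to $b_1,\ldots,b_{p-1}$, which are already fixed by the chain. Your decomposition of the entries into ``pinned first'' and ``handled by $\xi$'' is therefore organized around the wrong sets; repairing the first step forces you back to the paper's decomposition rather than a cosmetic fix.
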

\begin{proof}
	Let $a_1<_P a_2<_P\cdots<_Pa_m$ and $b_1<_Pb_2<_P\cdots<_P b_n$ be the entries of rows $r$ and $r+1$ of $A$ respectively. Proposition \ref{prop: raise_def} demonstrates that rows $r$ and $r+1$ in $e_r(A)$ are given by\[a_1,\ldots,a_{q-1}, b_p,\ldots, b_{p+t}, a_{q+t},\ldots, a_m\] and \[ b_1,\ldots, b_{p-1},a_q,\ldots, a_{q+t-1},b_{p+t+1},\ldots, b_n\] where $p$ and $t$ are as in the definition of $e_r$, and $q\ge 1$ is minimal such that $a_q$ either does not exist or lies in a column strictly right of $b_p$ in the $r$ alignment of $A$. Define
	\[
	\phi:\{ a_1,\ldots, a_m,b_1,\ldots,b_n \}\to \{r,r+1\}\times\Z_+
	\]
	so that it agrees with the $r$ alignment of $A$, except on $b_p,\ldots, b_{p+t}$ which are sent to row $r$, and $a_q,\ldots, a_{q+t-1}$ which are sent to row $r+1$. This amounts to swapping the rows in Fig. \ref{fig:raise}. We must show that $\phi$ is the $r$ alignment of $e_r(A)$.
		
	We see that $\phi$ inherits from the $r$ alignment of $A$ properties (1), (2), and (4) in Definition \ref{defn: weak_align}. Any entry in row $r$ of $\phi$ weakly left of $b_i$ for some $1\le i<p$ is also in row $r$ weakly left of $b_i$ in the $r$ alignment of $A$ and therefore cannot violate (3). The only elements in row $r$ of $\phi$ weakly left of $b_i$ for $p+t<i\le n$ are either in the same position in the alignment of $A$, or are some $b_j<_P b_i$. Either way, such $b_i$ cannot be part of a violation of (3). Each $a_{q+i}$ with $0\le i< t$ shares a column in $\phi$ with an incomparable element by Lemma \ref{lem:flip_raise} and so cannot violate (3) either. So $\phi$ is a weak $r$ alignment.
	
	Suppose we have a weak $r$ alignment $\psi$ such that $\psi\preceq \phi$. We will show that $\psi=\phi$. Since $b_p$ is the rightmost entry in row $r+1$ of the $r$ alignment of $A$ that does not share a column with an entry in row $r$, we know from Lemma \ref{lem:sw} that $b_1,\ldots, b_{p+t},a_{q+t},\ldots,a_m$ occupy adjacent columns in the alignment of $A$, hence in $\phi$. Properties (1) and (3) of weak $r$ alignments force each element in this chain to lie strictly right of the previous one in $\psi$, so the elements of this chain occupy the same columns in $\psi$ and $\phi$.
	
	Let $x$ be some element of the chain $a_q,\ldots, a_{q+t-1},b_{p+t+1},\ldots, b_n$, and assume we have determined all elements greater than $x$ in the chain to occupy the same position in both $\phi$ and $\psi$. In this case, we have in fact determined by the previous paragraph that all entries greater than $x$, in the chain or otherwise, occupy the same position in both $\phi$ and $\psi$. If $q=m+1$, i.e. $a_m$ lies in a column left of $b_p$ in the $r$ alignment of $A$, then this chain is vacuous as there cannot exist $b_{p+1}$ in a column strictly right of $a_m$ by choice of $b_p$. Assuming $q\ne m+1$ we then know $x$ must lie weakly left of $a_m$ in $\phi$, hence in $\psi$, by choice of $b_p$. Then the position of $x$ is uniquely determined by (3) and (4) in both $\phi$ and $\psi$ to be one column left of the leftmost greater element $y$. Since $y$ occupies the same position in $\phi$ and $\psi$ by induction, so to does $x$.
	
	It remains to show $\phi$ and $\psi$ agree on $a_1,\ldots, a_{q-1}$. Define \[\xi:\{ a_1,\ldots, a_m,b_1,\ldots,b_n \}\to\{r,r+1\}\times\Z_{+}\] to agree with the $r$ alignment of $A$ (hence the column assignments of $\phi$), except on the positions of $a_1,\ldots, a_{q-1}$ where it instead agrees with $\psi$. We know $\xi$ places each $a_i$ left of $a_{i+1}$ so it satisfies (1). Column positions of $\psi$ agree with $\xi$ which therefore satisfies (2) and (4).
	
	The positions of each $a_i$ with $q\le i\le m$ and each $b_j$ agree between $\xi$ and the $r$ alignment of $A$, so no such $a_i$ can violate (3) in $\xi$. For $1\le i<q$, we can only have $b_j<_P a_i$ if $j<p$. The positions of these elements in $\xi$ agree with $\psi$ so cannot violate (3) either. Then $\xi$ is a weak $r$ alignment that precedes the $r$ alignment of $A$ according to $\preceq$. Then $\xi$ is, in fact, the $r$ alignment of $A$ by Proposition \ref{prop:algn_char}. In particular, for $i<q$ we have $\psi(a_i)=\xi(a_i)=\phi(a_i)$. Therefore $\phi=\psi$ so $\phi$ is the alignment of $e_r(A)$ by Proposition \ref{prop:algn_char}.
\end{proof}

\begin{proposition}\label{prop: component}
	Let $A$ be a $P$-array. Consider the induced subgraph $G$ of $\operatorname{inc}(P)$ restricted to vertices in rows $r$ and $r+1$ of $A$. Applying $f_r$ or $e_r$ to $A$ swaps the rows of the elements in a single connected component of $G$.
\end{proposition}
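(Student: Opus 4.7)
The plan is to prove the statement for $f_r$; the argument for $e_r$ is entirely analogous, using Lemma \ref{lem:flip_raise} in place of Lemma \ref{lem:flip_lower}. Adopt the notation from Proposition \ref{prop: lower_def}: the moving elements are $a_p,\ldots,a_{p+t}$ and $b_q,\ldots,b_{q+t-1}$, arranged in $t+1$ consecutive columns of the $r$ alignment with $a_p$ alone in its column $c$, as in Fig. \ref{fig:lower}. Two things need to be established: (i) the moving elements lie in a single component of $G$, and (ii) no non-moving element of rows $r, r+1$ is incomparable in $P$ to any moving element.

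For (i), first observe that for each $0\le i<t$, Proposition \ref{prop: lower_def} supplies $b_{q+i}\not>_P a_{p+i}$, and since $b_{q+i}$ lies in the column strictly right of $a_{p+i}$, property (3) of weak $r$ alignments yields $b_{q+i}\not<_P a_{p+i}$; so $a_{p+i}\parallel b_{q+i}$. Combining this with Lemma \ref{lem:flip_lower}, which provides $a_{p+i+1}\parallel b_{q+i}$, produces a zigzag of edges
\[
a_p \sim b_q \sim a_{p+1}\sim b_{q+1}\sim \cdots \sim b_{q+t-1}\sim a_{p+t}
\]
in $G$, giving (i).

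For (ii), I would split on the position of a non-moving element $z$. When $z=a_i$ with $i<p$ or $z=b_j$ with $j<q$, properties (1) and (2) of weak $r$ alignments force $z$ into a column strictly left of $c$ (using in the $b$ case that column $c$ contains no $b$), so Lemma \ref{lem:col_ordering} applied to $a_p$ makes $z$ strictly smaller in $P$ than every moving element. When $z=a_i$ with $i>p+t$, any hypothetical incomparability $z\parallel b_{q+k}$ for some $0\le k<t$ combines with the edges from (i) to make the chain $a_{p+k}<_P a_{p+k+1}<_P a_i$ entirely incomparable to $b_{q+k}$, contradicting the (3+1)-free hypothesis on $P$. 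When $z=b_j$ with $j\ge q+t$, it reduces to showing $b_{q+t}>_P a_{p+t}$: either $b_{q+t}$ occupies the column immediately right of $a_{p+t}$ and the stopping condition defining $t$ gives the inequality directly, or properties (1) and (2) force $a_{p+t+1}$ to occupy that column alone, and then Lemma \ref{lem:col_ordering} delivers the same conclusion.

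The hardest part will be this last case, where the stopping condition only directly controls the single column immediately right of $a_{p+t}$ and one must separately leverage the alignment axioms to handle a $b_{q+t}$ sitting further to the right.
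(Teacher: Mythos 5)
Your proof is correct, and its first half coincides with the paper's: both establish connectivity of the moving set via the zigzag of incomparabilities $a_{p+i}\parallel b_{q+i}$ (linking consecutive columns) and $a_{p+i+1}\parallel b_{q+i}$ (within a column, from Lemma \ref{lem:flip_lower}). Where you diverge is in showing that the moving set is closed under adjacency in $G$. You do this by hand, with a three-case analysis proving that every non-moving vertex of $G$ is comparable to every moving element; this requires Lemma \ref{lem:col_ordering}, the stopping condition defining $t$, and two separate invocations of the $(3+1)$-free hypothesis. The paper instead observes that closure is automatic: since each row of a $P$-array is a chain, any $y$ incomparable to a moving $x$ must occupy the row opposite $x$ both before and after applying $f_r$, and hence must itself change rows. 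So the moving set is a union of connected components of $G$ with no order-theoretic computation at all, and combined with the zigzag it must be a single component. Your case analysis is not wrong and even yields slightly more information (non-moving elements are comparable to, and in your first case strictly below, all moving elements), but for the proposition as stated the paper's formal argument is shorter, needs neither $(3+1)$-freeness nor the alignment lemmas for that half, and spares you re-deriving $a_{p+t}<_P b_{q+t}$ in your third case, which is already item (ii) in the proof of Proposition \ref{prop: lower_def} and could simply have been cited.
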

\begin{proof}
	We will prove this for $f_r$, with the argument for $e_r$ being symmetric. If $f_r$ swaps the row of some $x\in P$, and if $y\in P$ is incomparable to $x$, then $y$ cannot share a row with $x$ either before or after applying $f_r$ which is to say $f_r$ swaps the row of $y$ as well. Thus $f_r$ swaps the rows of elements in a union of connected components of $G$.
	
	Now we must show that all elements whose rows are swapped by $f_r$ belong to the same connected component. Let $a_1<_P\cdots<_P a_m$ and $b_1<_P\cdots<_P b_n$ be the entries of rows $r$ and $r+1$ of $A$ respectively. Let $a_{p},\ldots, a_{p+t}$ and $b_{q},\ldots, b_{q+t-1}$ be the entries whose rows will be changed by $f_r$, as in Fig. \ref{fig:lower}. We have that $a_{p+i+1}\parallel b_{q+i}$ for $0\le i<t$ by Lemma \ref{lem:flip_lower}. That is to say each column in Fig. \ref{fig:lower} is a subset of a connected component. This also implies $b_{q+i}\not <_P a_{p+i}$, and we have $b_{q+i}\not >_P a_{p+i}$ by definition of $f_r$. Thus there is an edge in $G$ between each column in Fig. \ref{fig:lower}. So all of $a_p,\ldots, a_{p+t}$ and $b_q,\ldots, b_{q+t-1}$ belong to the same connected component as desired.
\end{proof}

We have seen in Propositions \ref{prop: lower_def} and \ref{prop: raise_def} that $e_r$ and $f_r$ are well-defined and affect the weight map in the correct way. The following theorem then completes the verification that $(\parrays{P},e_r,f_r,\wt)$ is a crystal.
\begin{theorem}\label{thm: inverse}
	For $A,A'\in \parrays{P}$ we have $f_r(A)=A'$ if and only if $e_r(A')=A$.
\end{theorem}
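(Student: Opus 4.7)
The plan is to use Lemmas~\ref{lem: fcol} and~\ref{lem: ecol}, which assert that $f_r$ and $e_r$ preserve the column of every entry in the $r$ alignment. Under this column preservation, the block of entries swapped by $f_r$ (Fig.~\ref{fig:lower}) occupies the same columns of the $r$ alignment of $A' = f_r(A)$ but with the two rows exchanged, matching the configuration of Fig.~\ref{fig:raise}. My strategy is to show that $e_r$ applied to $A'$ identifies exactly this block and reverses the swap, and to argue the converse analogously.

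For the direction $f_r(A) = A' \Rightarrow e_r(A') = A$, let $a_p, \ldots, a_{p+t}$ and $b_q, \ldots, b_{q+t-1}$ be the entries row-swapped by $f_r$, occupying consecutive columns $c_0, \ldots, c_0 + t$ of $A$'s $r$ alignment. By Lemma~\ref{lem: fcol}, these same columns of $A'$'s $r$ alignment hold the row-swapped configuration. I first check that $a_p$, alone in column $c_0$ of row $r+1$ in $A'$, is the rightmost entry of row $r+1$ of $A'$ not sharing a column with row $r$: by Lemma~\ref{lem:sw} applied to $A$, all unpaired row $r+1$ entries of $A$ lie strictly left of $c_0$ and remain so in $A'$; the columns $c_0+1, \ldots, c_0+t$ are paired in $A'$ (with rows merely exchanged); and columns beyond $c_0+t$ are untouched, so their row $r+1$ entries remain paired. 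I then check that $e_r$'s parameter matches $t$: for each $0 \le i < t$, the entry in row $r$ of $A'$ one column right of $a_{p+i}$ is $b_{q+i}$, and the defining choice of $t$ for $f_r$ forces $b_{q+i} \not>_P a_{p+i}$, which is precisely the condition compelling $e_r$ to extend its block. At $i = t$, the entry in row $r$ of $A'$ one column right of $a_{p+t}$ is either absent or is $a_{p+t+1}$, and $a_{p+t+1} >_P a_{p+t}$ from the chain $a_1 <_P \cdots <_P a_m$, so $e_r$ terminates exactly at $t$. Consequently $e_r(A')$ swaps the same block back and recovers $A$.

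The converse direction proceeds identically, with the roles of $f_r$ and $e_r$ exchanged, using Lemma~\ref{lem: ecol} in place of Lemma~\ref{lem: fcol} and the ``leftmost unpaired row $r$'' half of Lemma~\ref{lem:sw} to locate $f_r$'s starting entry in $A$. The main obstacle is the parameter matching in the middle step: verifying that $e_r$ on $A'$ starts from $a_p$ rather than some entry further right, and extends exactly $t$ steps rather than some different amount. The first point rests on Lemma~\ref{lem:sw}, while the second relies on observing that the extension condition $b_{q+i} \not >_P a_{p+i}$ built into the block by $f_r$ is exactly the condition $e_r$ reads off when deciding to extend, and that the chain relation $a_{p+t+1} >_P a_{p+t}$ gives the matching termination. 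Once these parameters agree, inverseness follows immediately from the explicit block description of both operators.
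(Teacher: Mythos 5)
Your proof is correct but finishes differently from the paper's. The two arguments agree on the first half: both invoke Lemma \ref{lem: fcol} (resp.\ Lemma \ref{lem: ecol}) together with Lemma \ref{lem:sw} to show that the entry $a_p$ that initiates the $f_r$-swap in $A$ reappears as the rightmost entry in row $r+1$ of the alignment of $A'$ not sharing a column with row $r$, i.e.\ as the entry initiating the $e_r$-swap. They part ways on why the two swapped blocks coincide. The paper concludes in one stroke via Proposition \ref{prop: component}: each operator flips a single connected component of the incomparability graph induced on rows $r$ and $r+1$, that graph is identical for $A$ and $A'$, and both operators act on the component containing $a_p$, so they flip the same set of elements. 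You instead match the length parameter $t$ by hand, checking that for $0\le i<t$ the row-$r$ entry of $A'$ one column right of $a_{p+i}$ is $b_{q+i}$ with $b_{q+i}\not>_P a_{p+i}$ (so $e_r$ must extend) and that at step $t$ the adjacent cell is empty or holds $a_{p+t+1}>_P a_{p+t}$ (so $e_r$ must stop). Both routes are sound: the paper's is shorter because Proposition \ref{prop: component} already packages the block identification, while yours stays at the level of the operator definitions and exhibits the inverse block explicitly, which arguably makes the cancellation more transparent. One caveat worth noting: the extension condition you use ($b_{q+i}\not>_P a_{p+i}$ for $i<t$) is the negation of the literal wording in the definition of $f_r$, but it is the reading the paper itself relies on in Lemma \ref{lem:flip_lower} and Proposition \ref{prop: lower_def}, so your parameter matching is consistent with how the operators are actually used.
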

\begin{proof}
	If $f_r(A)=A'$ then by Lemma \ref{lem: fcol} and Lemma \ref{lem:sw}, the leftmost entry $x$ in row $r$ of the $r$ alignment of $A$ that does not share a column with an entry in row $r+1$, is the rightmost element in row $r+1$ of the alignment of $A'$ that does not share a column with an entry in row $r$. By Proposition \ref{prop: component}, both $f_r$ and $e_r$ operate by swapping the rows of the entries in the connected component of $x$ of the induced subgraph of $\operatorname{inc}(P)$ restricted to vertices in rows $r$ and $r+1$ of $A$, or equivalently $A'$. Thus $e_r(A')=A$. The other direction is similar.
\end{proof}

Fig. \ref{fig:not_schur} shows an example of a poset $P$ and a subset of $\parrays{P}$ with edges given by the crystal.

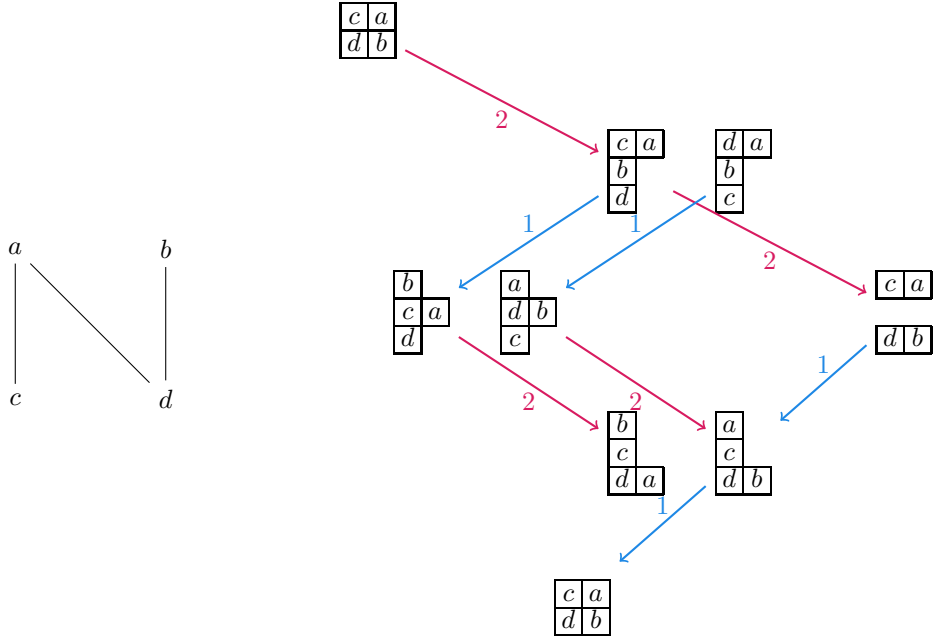
\begin{figure}[ht]
	\begin{tikzpicture}[scale=2,baseline=-150pt]
		\node (a) at (0,0) {$a$};
		\node (b) at (1,0) {$b$};
		\node (c) at (0,-1) {$c$};
		\node (d) at (1,-1) {$d$};
		\draw (a)--(c);
		\draw (a)--(d);
		\draw (b)--(d);
	\end{tikzpicture}
	\hspace{50pt}
	\begin{tikzpicture}[yscale=.75,xscale=.95]
		\node (a) at (-1.5,0) {\ytableausetup{centertableaux, boxsize=1em}\begin{ytableau} 
				c&a \\ 
				d&b\\
		\end{ytableau}};
		\node (b) at (2.25,-2.5) {\ytableausetup{centertableaux, boxsize=1em}\begin{ytableau} 
				c&a \\ 
				b\\
				d\\
		\end{ytableau}};
		\node (c) at (6,-5) {\ytableausetup{centertableaux, boxsize=1em}\begin{ytableau} 
				c&a \\ 
				\none\\
				d&b\\
		\end{ytableau}};
		\node (d) at (3.75,-7.5) {\ytableausetup{centertableaux, boxsize=1em}\begin{ytableau} 
				a \\ 
				c\\
				d&b\\
		\end{ytableau}};
		\node (e) at (1.5,-10) {\ytableausetup{centertableaux, boxsize=1em}\begin{ytableau} 
				\none \\ 
				c&a\\
				d&b\\
		\end{ytableau}};
		\node (f) at (-.75,-5) {\ytableausetup{centertableaux, boxsize=1em}\begin{ytableau} 
				b \\ 
				c&a\\
				d\\
		\end{ytableau}};
		\node (g) at (2.25,-7.5) {\ytableausetup{centertableaux, boxsize=1em}\begin{ytableau} 
				b \\ 
				c\\
				d&a\\
		\end{ytableau}};
		\node (h) at (.75,-5) {\ytableausetup{centertableaux, boxsize=1em}\begin{ytableau} 
				a \\ 
				d&b\\
				c\\
		\end{ytableau}};
		\node (i) at (3.75,-2.5) {\ytableausetup{centertableaux, boxsize=1em}\begin{ytableau} 
				d&a \\ 
				b\\
				c\\
		\end{ytableau}};
		\draw[thick,magenta2  ,->](a) -- (b)   node[midway,below]{$2$};
		\draw[thick,magenta2  ,->](b) -- (c)   node[midway,below]{$2$};
		\draw[thick,blue2  ,->](d) -- (e)   node[midway,above]{$1$};
		\draw[thick,magenta2  ,->](f) -- (g)   node[midway,below]{$2$};
		\draw[thick,magenta2  ,->](h) -- (d)   node[midway,below]{$2$};
		\draw[thick,blue2  ,->](b) -- (f)   node[midway,above]{$1$};
		\draw[thick,blue2  ,->](c) -- (d)   node[midway,above]{$1$};
		\draw[thick,blue2  ,->](i) -- (h)   node[midway,above]{$1$};
	\end{tikzpicture}
	\caption{\label{fig:not_schur} A segment of the $P$-array crystal for the given poset.}
\end{figure}

In the context of our crystal on $\parrays{P}$, the $P$-tableaux now hold crystal-theoretic significance.
\begin{theorem}\label{thm: hwt}
    The highest weight elements of $\parrays{P}$ are the $P$-tableaux.
\end{theorem}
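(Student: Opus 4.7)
The plan is to unwind the definition of ``highest weight element'' and prove that $e_r(A) = 0$ for all $r$ is equivalent to $A$ being a $P$-tableau. By definition of $e_r$, the condition $e_r(A) = 0$ means that in the $r$ alignment every column containing an entry in row $r+1$ also contains an entry in row $r$. So the goal splits into two implications, both fixing an arbitrary $r$ and writing the entries of rows $r$ and $r+1$ as $a_1 <_P \cdots <_P a_m$ and $b_1 <_P \cdots <_P b_n$.

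For the $(\Rightarrow)$ direction, I would first show by induction on $k$ that $C(a_k) = k$ for all $k \leq m$ in the pre-alignment. The key observation is that $b_j <_P a_k$ forces $j < k$: for $k \leq n$, if $j \geq k$ then $b_k \leq_P b_j <_P a_k$ contradicts the $P$-tableau condition $a_k \not>_P b_k$, while for $k > n$ one trivially has $j \leq n < k$. This pins down $C(a_k) = k$, so the pre-alignment has $a_k$ in column $k$ for each $k$. Next, by induction on the number of iterations in the alignment procedure, I would maintain the coupled invariants that every $a_k$ stays in column $k$ and that column $m+1$ stays empty. At each step, no $a_k$ with $k < m$ can shift because column $k+1$ contains $a_{k+1} >_P a_k$, $a_m$ cannot shift because column $m+1$ is empty, and no $b$-entry can shift into column $m+1$ because the shift rule requires the target column to be nonempty. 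Thus both invariants persist through to the final alignment, in which each $b_j$ occupies some column $c_j \in \{j, \ldots, m\}$ and shares it with $a_{c_j}$. This gives $e_r(A) = 0$.

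For the $(\Leftarrow)$ direction, suppose $e_r(A) = 0$ for every $r$, and fix $r$. Every $b_j$ shares a column with some $a_{c_j}$; the strict monotonicity of row-$(r+1)$ column assignments (property (1) of weak $r$ alignments) forces $c_1 < \cdots < c_n$, hence $c_j \geq j$ and $n \leq m$. By property (3) of weak $r$ alignments, $b_j <_P a_{c_j}$ would require $b_j$ strictly left of $a_{c_j}$, contradicting that they share a column; so $b_j \not<_P a_{c_j}$. If we had $a_j >_P b_j$, then $b_j <_P a_j \leq_P a_{c_j}$ would yield $b_j <_P a_{c_j}$, a contradiction. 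Therefore $a_j \not>_P b_j$ for all $j \leq n$, which is exactly the $P$-tableau condition on this pair of rows.

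The hard part will be the forward direction's coupled-invariant argument — specifically, ensuring that no $a$-entry ever becomes shiftable during the procedure. This rests delicately on the fact that column $m+1$ cannot be populated, because a $b$-shift into it would itself require column $m+1$ to already be nonempty. Once the two invariants are set up together the induction goes through cleanly, but the subtlety is in recognizing that they must be established and maintained simultaneously.
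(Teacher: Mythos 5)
Your proposal is correct and follows essentially the same route as the paper: both directions reduce $e_r(T)=0$ to a statement about the $r$ alignment, with the $P$-tableau condition forcing $C(a_k)=k$ in the pre-alignment and the $a$-chain anchoring columns $1,\ldots,m$ throughout the shifting procedure, and the reverse direction extracted from properties (1) and (3) of weak $r$ alignments. Your coupled-invariant induction is a more explicit spelling-out of the paper's one-line claim that the chain $a_1,\ldots,a_m$ ``must stay put'' and the $b_i$ never pass column $m$, but it is the same argument.
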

\begin{proof}
    Let $T\in\parrays{P}$ and take $r\ge 1$ to be arbitrary. Let $a_1<_P a_2<_P\cdots<_Pa_m$ and $b_1<_Pb_2<_P\cdots<_P b_n$ be the entries of rows $r$ and $r+1$ of $T$ respectively. Suppose $e_r(T)=0$. Then every nonempty column in the $r$ alignment of $T$ contains some $a_i$. By property (2) of weak $r$ alignments this means each $a_i$ is in column $i$ of the $r$ alignment. Each $b_i$ is necessarily in column $i$ or greater, so $b_i\not<_P a_i$ by property (3). This shows that rows $r$ and $r+1$ of $T$ satisfy the required column condition of $P$-tableaux.
    
    Conversely, suppose $m\ge n$ and $b_i\not<_P a_i$ for each $1\le i\le m$. Then each $a_i$ gets mapped to column $i$ in the $r$ pre-alignment of $T$. As entries shift right to obtain the $r$-alignment, the chain $a_1,\ldots, a_m$ must stay put, and the entries $b_1,\ldots,b_n$ will never move past the rightmost occupied column $m$. Thus, every nonempty column in the $r$ alignment of $T$ contains some $a_i$ which means $e_r(T)=0$.
    
    We have shown that $e_r(T)=0$ is equivalent to the $P$-tableau column condition on elements in rows $r$ and $r+1$. Therefore $T$ is a highest weight element if and only if it is a $P$-tableau.
\end{proof}

\section{$s$-Positivity}\label{sec: spos}
Note that the crystal on $\parrays{P}$ induces a crystal on any subset. In this section we will be studying characters of connected components of $\parrays{P}$. If $C$ is a connected component, we will use the notation $C_\mathbf{a}$ to denote the set of $P$-arrays in $C$ with weight $\mathbf{a}$.

\begin{proposition}
	If $C$ is a connected component of $\parrays{P}$ then $\ch(C)$ is a symmetric function.
\end{proposition}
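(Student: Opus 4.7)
The plan is to prove $\ch(C)$ is invariant under each adjacent transposition $x_r \leftrightarrow x_{r+1}$, since these generate all permutations of the variables. Equivalently, it suffices to exhibit a weight-swapping bijection $C_{\mathbf{a}} \to C_{s_r \mathbf{a}}$ for each $r$ and each weak composition $\mathbf{a}$, where $s_r$ swaps the $r$th and $(r+1)$th parts.

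First I would partition $C$ into \emph{$r$-strings}: each $A \in C$ lies in a unique maximal sequence $A_0, A_1, \ldots, A_\ell$ with $f_r(A_i) = A_{i+1}$, $e_r(A_0) = 0$, and $f_r(A_\ell) = 0$. These are well-defined by Theorem~\ref{thm: inverse}, and finite because $|P|<\infty$ bounds $\wt(A_0)_r$ and each application of $f_r$ decreases this quantity by one (Proposition~\ref{prop: lower_def}). Since $e_r$ and $f_r$ change only the $r$th and $(r+1)$th parts of the weight, it is enough to show that on every $r$-string the multiset of pairs $\{(\wt(A_i)_r, \wt(A_i)_{r+1}) : 0 \le i \le \ell\}$ is invariant under swapping coordinates; the involution $A_i \mapsto A_{\ell - i}$ then swaps the two weight coordinates, and assembling these across all $r$-strings yields the desired bijection.

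The key step is to show $\ell = \wt(A_0)_r - \wt(A_0)_{r+1}$, so that the weight pairs along the string form the palindromic sequence $(p, q), (p-1, q+1), \ldots, (q, p)$. For this I would analyze the $r$ alignment of $A_0$. Because $e_r(A_0) = 0$, every column of the alignment containing a row-$(r+1)$ entry also contains a row-$r$ entry. Let $k$ be the number of columns containing entries in both rows $r$ and $r+1$, and $u_0$ the number containing only a row-$r$ entry; by Lemma~\ref{lem:sw} these are the only column types occurring in rows $r, r+1$. Thus $\wt(A_0)_r = u_0 + k$ and $\wt(A_0)_{r+1} = k$. By Lemma~\ref{lem: fcol}, applying $f_r$ preserves the column of every entry in the alignment, and inspection of Fig.~\ref{fig:lower} shows that $f_r$ converts exactly one row-$r$-only column into a row-$(r+1)$-only column, while the ``both'' columns remain ``both''. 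Iterating, we get $\ell = u_0$ and $\wt(A_i) = \wt(A_0) - i\alpha_r$, which is precisely the palindromic string of weight pairs we needed.

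The main obstacle is the bookkeeping in the previous paragraph: confirming that $f_r$ alters exactly one ``pairing type'' column at a time. This is not quite immediate from the definition of $f_r$ (which may simultaneously rearrange a long block of columns as in Fig.~\ref{fig:lower}), but it follows cleanly once one combines Lemma~\ref{lem: fcol} with the observation that the columns to the right of $a_p$ in Fig.~\ref{fig:lower} have their rows swapped but remain ``both'' columns, while the leftmost column flips from row-$r$-only to row-$(r+1)$-only. Once this is in place, the $r$-string sums to a factor symmetric in $x_r, x_{r+1}$, so $\ch(C)$ is symmetric in these two variables, and since $r$ was arbitrary, $\ch(C)$ is a symmetric function.
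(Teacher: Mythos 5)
Your proof is correct and is essentially the paper's argument: both reduce to adjacent transpositions and rest on the same two facts, namely that Lemma~\ref{lem: fcol} preserves columns of the $r$ alignment so that each application of $f_r$ converts exactly one row-$r$-only column into a row-$(r+1)$-only column, giving $\mathbf{a}_r-\mathbf{a}_{r+1}$ as the number of available lowering steps. The paper simply states the resulting bijection directly as $f_r^{k}\colon C_{\mathbf a}\to C_{s_r\mathbf a}$ with inverse $e_r^{k}$ (for $k=\mathbf a_r-\mathbf a_{r+1}$), which is exactly the restriction of your string-reversal involution to $C_{\mathbf a}$.
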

\begin{proof}
	We must show that the cardinalities of $C_\mathbf{a}$ and $C_\mathbf{b}$ match when $\mathbf{b}$ is a permutation of $\mathbf{a}$. It suffices to assume $\mathbf{b}=s_r\mathbf{a}$ for a simple transposition $s_r$, and $\mathbf{a}_r>\mathbf{a}_{r+1}$ without loss of generality.
	
	In the $r$ alignment of any $A\in C_\mathbf{a}$, we must have that there are at least $k=\mathbf{a}_r-\mathbf{a}_{r+1}$ entries in row $r$ that do not share columns with entries in row $r+1$. Therefore $f_r^k(A)\ne 0$ and \[\wt(f_r^k(A))=\mathbf{a}-k\alpha_r=s_r\mathbf{a}\] by Lemma \ref{lem: fcol}. Then $f_r^k$ provides a bijection from $C_{\mathbf a}$ to $C_{s_i\mathbf a}$ with inverse $e_r^k$ by Theorem \ref{thm: inverse}.
\end{proof}

The next result is our awaited refinement of Theorem \ref{thm:gash}. The proof follows in the spirit of Gasharov's by using the Jacobi-Trudi identity and the usual inner product on symmetric functions to get an alternating formula for the Schur coefficients in terms of the monomial coefficients \cite{Gas96}. However, the involution we use to cancel terms is based on our crystal operators, which allows us to stay within a connected component of the crystal.
\begin{theorem}\label{thm: spos}
	Let $C$ be a connected component of $\parrays{P}$. Then $\ch(C)=\sum_{\lambda} c_\lambda s_\lambda$ where $c_\lambda$ is the number of $P$-tableaux in $C$ of weight $\lambda$.
\end{theorem}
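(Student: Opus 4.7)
The plan is to adapt Gasharov's proof of Theorem~\ref{thm:gash}: apply Jacobi--Trudi to express $\langle \ch(C), s_\lambda\rangle$ as an alternating sum of monomial coefficients of $\ch(C)$, then construct a sign-reversing involution on the pairs being summed over whose fixed points are precisely the $P$-tableaux of weight $\lambda$ in $C$. What is new relative to Gasharov is that the involution is assembled from the crystal operators $e_r, f_r$, so it stays inside $C$---this is precisely what refines the $s$-positivity statement from $\parrays P$ to each connected component.

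Concretely, Jacobi--Trudi yields
\[
s_\lambda = \sum_{\sigma \in S_\ell} \operatorname{sgn}(\sigma)\, h_{\mu(\sigma)}, \qquad \mu(\sigma)_i := \lambda_i - i + \sigma(i),
\]
with the convention $h_\alpha = 0$ when any $\alpha_i < 0$. Since the previous proposition shows $\ch(C)$ is symmetric, it has a unique expansion $\ch(C) = \sum_\lambda c_\lambda^C s_\lambda$. Pairing both sides with $s_\lambda$ via the Hall inner product and using the duality $\langle h_\alpha, m_\beta\rangle = \delta_{\alpha,\beta}$ together with symmetry (which gives $\langle \ch(C), h_\alpha\rangle = |C_\alpha|$ for any weak composition $\alpha$, with $|C_\alpha| = 0$ if $\alpha$ has a negative part), one obtains
\[
c_\lambda^C = \sum_{\sigma \in S_\ell} \operatorname{sgn}(\sigma)\, |C_{\mu(\sigma)}|,
\]
a signed count of pairs $(A, \sigma)$ with $A \in C$ and $\wt(A) = \mu(\sigma)$; note that $\sigma$ is uniquely recovered from $A$ and $\lambda$ via $\sigma(i) = \wt(A)_i - \lambda_i + i$. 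Theorem~\ref{thm: hwt} identifies the intended fixed points $(T, \operatorname{id})$ with $T$ a $P$-tableau of weight $\lambda$ in $C$ as exactly those pairs with $\sigma = \operatorname{id}$ and $e_r(T) = 0$ for every $r$.

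For the involution, the natural attempt is to pick a canonical index $r$ from $(A, \sigma)$---say, the smallest $r$ for which either $\sigma(r) > \sigma(r+1)$, or $\sigma(r) < \sigma(r+1)$ and $e_r(A) \neq 0$---and set
\[
\iota(A, \sigma) = \begin{cases} \bigl(f_r^{\sigma(r)-\sigma(r+1)}(A),\;\sigma s_r\bigr) & \text{if } \sigma(r) > \sigma(r+1), \\ \bigl(e_r^{\sigma(r+1)-\sigma(r)}(A),\;\sigma s_r\bigr) & \text{if } \sigma(r) < \sigma(r+1), \end{cases}
\]
where $\sigma s_r$ swaps positions $r$ and $r+1$ of $\sigma$, reversing its sign. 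Propositions~\ref{prop: lower_def} and~\ref{prop: raise_def} ensure the weights transform as required to $\mu(\sigma s_r)$; in the descent branch, $\wt(A)_r - \wt(A)_{r+1} = (\lambda_r - \lambda_{r+1}) + (\sigma(r) - \sigma(r+1)) + 1 \ge k+1$ for $k = \sigma(r) - \sigma(r+1)$, so $f_r^k(A) \neq 0$ by the argument in the preceding proposition. The main obstacle will be verifying the involution identity $\iota \circ \iota = \operatorname{id}$ together with the well-definedness of $e_r^k$ in the ascent branch when $k > 1$; the latter likely requires refining the selection rule so that the ascent branch is only invoked when $\sigma = \operatorname{id}$, forcing $k = 1$ and requiring just $e_r(A) \neq 0$. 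For the involution identity, since $e_r$ and $f_r$ only modify rows $r, r+1$, conditions at positions $r' \le r - 2$ and $r' \ge r + 1$ are automatically preserved, and the only delicate spot is $r' = r - 1$, where one must exploit the minimality of $r$ along with the alignment characterization (Proposition~\ref{prop:algn_char}) and Lemma~\ref{lem:col_ordering} to rule out a new violation created by the change to row $r$. Once $\iota$ is established, the signed contributions of non-fixed pairs cancel, leaving $c_\lambda^C$ equal to the number of fixed points, which are precisely the $P$-tableaux of weight $\lambda$ in $C$.
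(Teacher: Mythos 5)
Your reduction via Jacobi--Trudi to a signed count of pairs $(A,\sigma)$ with $\wt(A)=\mu(\sigma)$, and your identification of the intended fixed points via Theorem \ref{thm: hwt}, match the paper's setup. The gap is in the involution itself: your selection rule chooses $r$ from the descent/ascent pattern of $\sigma$, and that choice is not preserved by the map. Concretely, take $\ell=3$ and $\sigma$ with $\sigma(1)=2$, $\sigma(2)=3$, $\sigma(3)=1$, and suppose $e_1(A)=0$. Your rule selects $r=2$ (a descent) and applies $f_2^{2}$, producing the permutation $\sigma s_2$ with $(\sigma s_2)(1)=2>1=(\sigma s_2)(2)$. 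The new pair has a descent at position $1<2$, so a second application of $\iota$ acts at $r=1$ instead of undoing the first step, and $\iota^2\ne\operatorname{id}$. This failure is combinatorial in $\sigma$ alone --- the swap $\sigma\mapsto\sigma s_r$ can create a descent at $r-1$ even when rows $r-1$ and $r$ of $A$ give no new crystal edge --- so it cannot be repaired by the alignment lemmas. A second unresolved point is the ascent branch: you need $e_r^{k}(A)\ne 0$ for $k=\sigma(r+1)-\sigma(r)$ possibly greater than $1$, since that branch must invert the descent branch for arbitrary $\sigma$, not only $\sigma=\operatorname{id}$; you give no argument for this, and restricting the ascent branch to $\sigma=\operatorname{id}$ would destroy the pairing.

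The paper sidesteps both problems by selecting the pivot from $A$ alone, independently of the permutation: it takes the minimal column $c$ at which the $P$-tableau condition fails and the maximal row $r$ at which it fails in that column, then uses the structure of the $r$ alignment to show that all entries weakly left of column $c$ (namely $a_1,\ldots,a_{c-1}$ and $b_1,\ldots,b_c$) are untouched by the required power of $f_r$ or $e_r$. Hence the pair $(c,r)$ is recomputed identically for $\iota(A)$, giving $\iota^2=\operatorname{id}$ by Theorem \ref{thm: inverse}; the same analysis shows enough unpaired entries lie strictly right of column $c$ for the required power of $f_r$ or $e_r$ to be nonzero. To salvage your approach you would need a selection rule with this invariance property; the descent-of-$\sigma$ rule does not have it.
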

\begin{proof}
	Let $\sum_{\lambda} c_\lambda s_\lambda$ be the expansion of $\ch(C)$ into the Schur basis for symmetric functions. Set $n=\# P$ and let $\lambda=(\lambda_1,\ldots, \lambda_\ell)$ be a partition of $n$. For $\pi\in S_\ell$ we let $\pi(\lambda)$ denote the weak composition with $i$th component $\pi(\lambda)_i= \lambda_{\pi(i)}-\pi(i)+i$ for $1\le i\le \ell$. Notice that $\pi(\lambda)\ne\sigma(\lambda)$ when $\pi\ne \sigma$, as we have $\pi(i)<\sigma(i)$ for some $i$ which yields \[\pi(\lambda)_i-\sigma(\lambda)_i=(\lambda_{\pi(i)}-\lambda_{\sigma(i)})+(\sigma(i)-\pi(i))>0.\]
	The Jacobi-Trudi identity says
	\[
	s_\lambda = \det(h_{\lambda_j-j+i})_{i,j=1}^\ell = \sum_{\pi\in S_\ell}\operatorname{sgn}(\pi)h_{\pi(\lambda)}
	\]
	where $h_\mathbf{a}$ here denotes the complete homogeneous symmetric function of the weak composition $\mathbf{a}$. If $\mathbf{a}$ is not a weak composition, i.e. it has a negative part, then $h_{\mathbf a}$ is considered to be zero. Recall that the Schur functions are an orthonormal basis with respect to the inner product on symmetric functions defined by $\langle m_\lambda, h_\mu\rangle = \delta_{\lambda\mu}$ where $m_\lambda$ stands for the monomial symmetric function indexed by $\lambda$. Therefore
	\begin{equation}\label{eq: coeff}
	c_\lambda =\langle \ch(C), s_\lambda\rangle = \langle \ch(C), \sum_{\pi\in S_\ell}\operatorname{sgn}(\pi)h_{\pi(\lambda)}\rangle =\sum_{A\in\bigsqcup_{\pi\in S_\ell} C_{\pi(\lambda)}} \operatorname{sgn}(A)
	%\sum_{\pi\in S_\ell}\operatorname{sgn}(\pi) |C_{\pi(\lambda)}|
	\end{equation}
	where $\operatorname{sgn}(A)$ takes the value $\operatorname{sgn}(\pi)$ when $A\in C_{\pi(\lambda)}$.
	
	We will define an involution $\iota: \bigsqcup_{\pi\in S_\ell} C_{\pi(\lambda)}\to \bigsqcup_{\pi\in S_\ell} C_{\pi(\lambda)}$.  Let $A=\{ A_{i,j} \}\in C_{\pi(\lambda)}$ for some $\pi\in S_\ell$. If $A$ is a $P$-tableau then set $\iota(A)=A$. Otherwise, there is a minimal $c$ such that some $A_{i+1,c}$ exists with $i>0$, but either $A_{i, c}$ does not exist or $A_{i,c}>_P A_{i+1,c}$. With $c$ so fixed, let $r$ be maximal among such $i$. 
	
	For convenience denote rows $r$ and $r+1$ of $A$ by $a_1<_P\cdots<_P a_{\pi(\lambda)_r}$ and $b_1<_P\cdots<_P b_{\pi(\lambda)_{r+1}}$.
	For $k<c$ we have $a_k\not>_p b_k$ which puts $a_k$ in column $k$ of the $r$ pre-alignment of $A$. If $b_{c+1}$ does not exist, then $a_c$ is placed in column $c_1$ of the $r$ pre-alignment. This is to say column $c+1$ either contains $a_c$ or $b_{c+1}$ or is emtpy. Since $P$ is (3+1)-free and $a_k\not>_P b_k$ when $k<c$, we have that such $a_k$ are less than $b_{c+1}$, if it exists. Then every entry in a column strictly left of $c+1$ in the $r$ pre-alignment is less than any entry in column $c+1$. This prohibits any of these lesser entries from occupying column $c+1$ in the $r$ alignment, as none of them can enter column $c+1$ until both $a_c$ and $b_{c+1}$ have vacated, but neither can an entry be moved into a previously empty column. This is to say that in the $r$ alignment, the entries weakly left of column $c$ are exactly $a_1,\ldots, a_{c-1}$ and $b_1,\ldots, b_c$.
	
	The remaining entries $a_c,\ldots, a_{\pi(\lambda)_r}$ and $b_{c+1},\ldots, b_{\pi(\lambda)_{r+1}}$ now lie strictly right of column $c$ in the $r$ alignment. So if $\pi(\lambda)_r\ge \pi(\lambda)_{r+1}$ then there are at least
	\[\pi(\lambda)_r-\pi(\lambda)_{r+1}+1=\pi(\lambda)_r-(\pi s_r)(\lambda)_r\] 
	such $a_i$ that do not share a column with an entry in row $r+1$. Then it makes sense to define \[\iota(A)=f_{r}^{\pi(\lambda)_r-(\pi s_r)(\lambda)_r}(A).\]
	If instead $\pi(\lambda)_r< \pi(\lambda)_{r+1}$ then there are at least 
	\[
	\pi(\lambda)_{r+1}-\pi(\lambda)_r-1= (\pi s_r)(\lambda)_r-\pi(\lambda)_r
	\] entries $b_i$ strictly right of column $c$ that do not share a column with an entry in row $r$. Here, it makes sense to define
	\[
	\iota(A)=e_{r}^{(\pi s_r)(\lambda)_r-\pi(\lambda)_r}(A).
	\]
	In either of these cases, note that $A$ gets mapped into $C_{(\pi s_r)(\lambda)}$. This means both that $\iota$ reverses the sign of $A$, and our codomain is correct.
	
	In the case where $\pi(\lambda)_r\ge\pi(\lambda)_{r+1}$ we have seen that the $\pi(\lambda)_r-(\pi s_r)(\lambda)_r$ entries in row $r$ of the $r$ alignment that do not share a column with an entry in row $r+1$ lie strictly right of column $c$. Similarly, in the case where $\pi(\lambda)_r< \pi(\lambda)_{r+1}$, we know the rightmost $(\pi s_r)(\lambda)_r-\pi(\lambda)_r$ in row $r+1$ of the $r$ alignment that do not share a column with an entry in row $r$ all lie strictly right of column $c$. In both cases, $a_1,\ldots,a_{c-1}$ and $b_1,\ldots, b_c$ lie weakly left of column $c$. So $\iota$ will not change the rows of any $a_1,\ldots, a_{c-1}$ or $b_1,\ldots, b_{c}$. That is, $\iota$ does not change the indices of any $A_{i,j}$ with $j<c$, or with $j=c$ and $i>r$. The selection of both $c$ and $r$ between $A$ and $\iota(A)$ therefore agree, and to apply $\iota^2$ to $A$ is to apply \[e_{r}^{(\pi s_r s_r)(\lambda)_r-(\pi s_r)(\lambda)_r}\circ f_{r}^{\pi(\lambda)_r-(\pi s_r)(\lambda)_r}\] or \[f_{r}^{(\pi s_r)(\lambda)_r-(\pi s_rs_r)(\lambda)_r}\circ e_{r}^{(\pi s_r)(\lambda)_r-\pi(\lambda)_r}\] which are both the identity map by Theorem \ref{thm: inverse}.
	
	Since $\iota$ reverses the sign of $P$-arrays that are not $P$-tableaux, terms cancel in equation \ref{eq: coeff} to get
	\[
	c_\lambda = \sum_{T}\operatorname{sgn}(T)
	\]
	summed over $P$-tableaux in $\bigsqcup_{\pi\in S_\ell} C_{\pi(\lambda)}$. If $\pi(i)>\pi(i+1)$ then $\pi(\lambda)_i<\pi(\lambda)_{i+1}$ which means each such $P$-tableau is in $C_\lambda$ as it must have partition weight. Furthermore, these $P$-tableaux have positive sign so we conclude $c_\lambda$ is the number of $P$-tableaux of weight $\lambda$ in $C$.
\end{proof}

Recall that by Proposition \ref{prop: component}, our crystal operators, and therefore our involution $\iota$, work by flipping connected components of the incomparability graph restricted to relevant rows. This is necessary if we want to obtain another $P$-array from our input. As a quick point of comparison between our involution and previous variations used to obtain coarser $s$-positivity theorems, Gasharov would have us flip all such connected components right of what we called column $c$ \cite{Gas96}, while Shareshian and Wachs would have us flip the subset of \textit{those} connected components which have odd cardinality \cite[Proof of Thm~6.3]{SW16}. Our involution is even more restrained, flipping a minimal number of these components.

\section{Natural Unit Interval Orders}\label{sec: nuio}
A \textbf{unit interval order} is a poset isomorphic to a finite subset $P$ of $\R$ with the relation $u<_P v$ when $u+1<v$. These are axiomatized by the requirements that they be (3+1)-free and (2+2)-free, which can be seen as a modification to the argument presented in \cite{SS58} or more directly in \cite{BB03}.

\begin{definition}
	A \textbf{natural unit interval order} is a finite poset $(P,\le_P)$ on a subset of $\N$ such that 
	\begin{itemize}
		\item $u<_P v$ implies $u<v$ as natural numbers, and
		\item if $u<_P w$ with $v\parallel w$ and $v\parallel u$, then $u<v<w$ as natural numbers.
	\end{itemize}
\end{definition}

These define the same isomorphism classes as the unit interval orders \cite{SW16}.

\begin{definition}\cite{SW16}
	Given a finite graph $G=(V,E)$ with $V\subset\N$, define the \textbf{chromatic quasisymmetric function} by
	\[
	X_G(x,q)=\sum_{\kappa} q^{\asc(\kappa)} \prod_{v\in V} x_{\kappa(v)}
	\]
	summed over all proper colorings of $G$, where
	\[
	\asc(\kappa)=\#\{ (u,v)\in E\mid \kappa(u)<\kappa(v)  \text{ and } u<v \text{ as natural numbers} \}.
	\]
	An edge that counts towards $\asc(\kappa)$ is an \textbf{ascent}.
\end{definition}
We of course recover the chromatic symmetric function as $X_G(x)=X_G(x,1)$.

If $P$ is a (3+1)-free poset on a subset of $\N$, then for $A\in\parrays{P}$ we write $\asc(A)$ to mean $\asc(\kappa)$ where $\kappa$ is the proper coloring of $\operatorname{inc}(P)$ that corresponds to $A$ by sending row $i$ of $A$ to the number $i$. In other words, $\asc(A)$ is the number of pairs $(x,y)\in P\times P$ such that $x\parallel y$, $x<y$ as natural numbers, and $x$ occurs in a higher row of $A$ than $y$. When $P$ is a natural unit interval order we will see that the crystal on $\parrays{P}$ respects this statistic.
\begin{theorem}
	If $P$ is a natural unit interval order then $\asc$ is constant on connected components of the $P$-array crystal.
\end{theorem}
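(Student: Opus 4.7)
My plan is to show $\asc(f_r(A)) = \asc(A)$ whenever $f_r(A)\neq 0$; the analogous identity for $e_r$ follows via Theorem \ref{thm: inverse}, so that $\asc$ is constant along every directed edge of the $P$-array crystal, hence constant on connected components. Write $A' = f_r(A)$, and let $S_a = \{a_p,\ldots,a_{p+t}\}$ and $S_b = \{b_q,\ldots,b_{q+t-1}\}$ denote the entries swapped by $f_r$, as in Fig. \ref{fig:lower}. I would track the change $\asc(A')-\asc(A)$ by partitioning incomparable pairs $(x,y)\in P\times P$ according to how many of $x,y$ belong to $S_a\cup S_b$.

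Pairs disjoint from $S_a\cup S_b$ contribute zero to the change. For a pair with exactly one endpoint $x\in S_a\cup S_b$, Proposition \ref{prop: component} forces $y$ to lie outside rows $r$ and $r+1$, since any incomparable neighbor of $x$ in those rows would lie in the same connected component as $x$ and would therefore also be swapped. But $x$ only moves between rows $r$ and $r+1$ while $y$ sits in some row $i\notin\{r,r+1\}$, so the row comparison between them is unaffected; the contribution is again zero. Pairs internal to $S_a$ or to $S_b$ are comparable because each is a chain, so they cannot be incomparable. The remaining pairs have the form $(a_{p+i},b_{q+j})$ with $0\le i\le t$ and $0\le j\le t-1$, and when such a pair is incomparable it contributes $+1$ to $\asc(A')-\asc(A)$ if $a_{p+i}>b_{q+j}$ as naturals, and $-1$ if $a_{p+i}<b_{q+j}$ as naturals.

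The structural heart of the proof uses the NUIO axiom. Lemma \ref{lem:flip_lower} already supplies $a_{p+s+1}\parallel b_{q+s}$ for $0\le s<t$. I would next establish $a_{p+s}\parallel b_{q+s}$ as well: the choice of $t$ rules out $a_{p+s}<_P b_{q+s}$, while $b_{q+s}<_P a_{p+s}$ combined with $a_{p+s}<_P a_{p+s+1}$ would yield $b_{q+s}<_P a_{p+s+1}$, contradicting Lemma \ref{lem:flip_lower}. The NUIO axiom applied to the chain $a_{p+s}<_P a_{p+s+1}$ with $b_{q+s}$ incomparable to both endpoints then yields the interleaving
\[
a_p<b_q<a_{p+1}<b_{q+1}<\cdots<a_{p+t-1}<b_{q+t-1}<a_{p+t}
\]
as natural numbers. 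Two further appeals to the NUIO axiom rule out every remaining incomparability between $S_a$ and $S_b$: for $i<j$, an incomparability $a_{p+i}\parallel b_{q+j}$ combined with $b_{q+i}<_P b_{q+j}$ and the already established $a_{p+i}\parallel b_{q+i}$ would force $b_{q+i}<a_{p+i}$ as naturals, contradicting the interleaving; for $i>j+1$, an incomparability $a_{p+i}\parallel b_{q+j}$ combined with $a_{p+j+1}<_P a_{p+i}$ and $a_{p+j+1}\parallel b_{q+j}$ would force $a_{p+j+1}<b_{q+j}$, again contradicting the interleaving.

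Therefore the incomparable pairs between $S_a$ and $S_b$ are exactly the $t$ pairs $(a_{p+s},b_{q+s})$ for $0\le s<t$ (each satisfying $a<b$ as naturals) together with the $t$ pairs $(a_{p+s+1},b_{q+s})$ for $0\le s<t$ (each satisfying $a>b$ as naturals). Their contributions to $\asc(A')-\asc(A)$ cancel as $-t+t=0$, completing the argument. The main obstacle is the structural analysis establishing the interleaving and ruling out the ``long-range'' incomparabilities; once the NUIO axiom is deployed in these three ways, everything else is routine bookkeeping.
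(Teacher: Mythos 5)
Your proof is correct and follows essentially the same route as the paper: reduce to the incomparable pairs inside the swapped set, use the natural unit interval order axiom to interleave the swapped entries as natural numbers, and observe that the ascents gained and lost cancel. The only (minor) divergence is in ruling out incomparabilities between non-adjacent swapped entries: you apply the NUIO axiom twice more, whereas the paper observes that (3+1)-freeness bounds every vertex degree by 2 in the induced bipartite incomparability graph, so the swapped component is a path and each row-$(r+1)$ entry has exactly the two neighbors you identify.
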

\begin{proof}
	Let $A\in\parrays{P}$ with $f_r(A)\ne 0$. We must show $\asc(A)=\asc(f_r(A))$.
	
	Let $G$ be the graph induced by $\operatorname{inc}(P)$ by restricting to the vertices in rows $r$ and $r+1$ of $A$. By Proposition \ref{prop: component}, $f_r(A)$ is obtained from $A$ by swapping the rows of all elements in some connected components $C$ of $G$. Because $G$ is bipartite and $P$ is (3+1)-free, the degree of each vertex is at most 2 and therefore $C$ is a path or a cycle. Since $\wt(f_r(A))=\wt(A)-\alpha_r$ we know $C$ contains some $t+1$ elements in row $r$ of $A$, and $t$ elements in row $r+1$. Since $G$ is bipartite and $C$ has an odd number of elements, $C$ is in fact a path.
	
	Suppose we have some $u\in P\setminus C$ and $v\in P$ with $u\parallel v$. If $v\notin C$ then neither element changes rows between $A$ and $f_r(A)$ so the pair $(u,v)$ is an ascent in both or neither of $A$ and $f_r(A)$. If $v\in C$ then $u$ cannot reside in either row $r$ or $r+1$ of $A$. Again, the pair $(u,v)$ is an ascent in both or neither of $A$ and $f_r(A)$.
	
	To show $\asc(A)=\asc(f_r(A))$ we must finally show that there is a 1-1 correspondence between ascents in $A$ and ascents in $f_r(A)$ which occur entirely within $C$. In both cases, we will there is exactly one ascent involving each given $v\in C$ from row $r+1$ of $A$ (equivalently row $r$ of $f_r(A)$), which is enough since each ascent must include exactly one such element. Since $C$ is a path containing $t+1$ elements from row $r$ and $t$ elements from row $r+1$ of $A$, each element in row $r+1$ has degree 2 in $G$. Then there are $u,w\in C$ in row $r$ of $A$ with $u<_P w$, and with $v$ incomparable to both. By definition of a natural unit interval order, $u<v<w$ as natural numbers. Then $(u,v)$ is an ascent in $A$ but not $f_r(A)$, and $(v,w)$ is an ascent in $f_r(A)$ but not $A$. These are the only elements in $C$ incomparable to $v$, so these are the only possible ascents contained in $C$ involving $v$.
\end{proof}
Now if $C$ is a connected component of the $P$-array crystal for a natural unit interval order, it makes sense to define $\asc(C)$ as $\asc(A)$ for any $A\in C$. Therefore we have the following corollary.
\begin{corollary}\label{cor: ch_exp}
	If $P$ is a natural unit interval order then \[X_{\operatorname{inc}(P)}(x,q)=\sum_{C}q^{\asc(C)}\ch(C)\] summed over connected components of the $P$-array crystal.
\end{corollary}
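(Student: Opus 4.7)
The plan is to unwind definitions and reindex the sum so that the grouping by crystal components becomes transparent. First I would rewrite the chromatic quasisymmetric function as a sum over $P$-arrays rather than proper colorings: by the bijection described in Section \ref{sec: chrom}, proper colorings $\kappa$ of $\operatorname{inc}(P)$ correspond to $P$-arrays $A$ with the number of elements in row $i$ equal to $\#\kappa^{-1}(i)$. Under this correspondence, $\prod_{v\in V} x_{\kappa(v)}=\prod_{i\ge 1} x_i^{\wt(A)_i}$, and the statistic $\asc(\kappa)$ matches the previously defined $\asc(A)$ (this is essentially definitional, since the definition of $\asc(A)$ was chosen to agree with $\asc(\kappa)$). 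Thus
\[
X_{\operatorname{inc}(P)}(x,q)=\sum_{A\in\parrays{P}} q^{\asc(A)}\prod_{i\ge 1} x_i^{\wt(A)_i}.
\]

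Next I would partition the sum according to the connected components of the $P$-array crystal. Since $\parrays{P}$ is the disjoint union of its components $C$, this gives
\[
X_{\operatorname{inc}(P)}(x,q)=\sum_{C}\sum_{A\in C} q^{\asc(A)}\prod_{i\ge 1} x_i^{\wt(A)_i}.
\]
Now the previous theorem, that $\asc$ is constant on each connected component, is invoked: for every $A\in C$ we may replace $\asc(A)$ by the well-defined quantity $\asc(C)$, and $q^{\asc(C)}$ pulls out of the inner sum. What remains inside is exactly $\sum_{A\in C}\prod_{i} x_i^{\wt(A)_i}=\ch(C)$ by the definition of the crystal character, yielding the claimed identity.

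There is no real obstacle here; this is a formal consequence of the preceding theorem together with the weight/ascent translation between colorings and $P$-arrays. The only thing worth stating explicitly is the matching of $\asc(\kappa)$ with $\asc(A)$ under the bijection, which is immediate from the definition given just before the theorem. Everything else is just rearranging a sum.
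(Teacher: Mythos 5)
Your proposal is correct and matches the paper's reasoning: the paper treats this corollary as an immediate consequence of the preceding theorem, implicitly using exactly the rewriting of $X_{\operatorname{inc}(P)}(x,q)$ as a sum over $P$-arrays and the constancy of $\asc$ on components that you spell out. Nothing is missing.
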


Still for a natural unit interval order, Shareshian and Wachs showed $X_{\operatorname{inc}(P)}(x,q)$ is symmetric in $x$, and in fact $s$-positive in the sense that the coefficient of each $q^i$ is $s$-positive \cite{SW16}. Corollary \ref{cor: ch_exp} together with Theorem \ref{thm: spos} provide a new proof of these facts.

\section{Two-Row $P$-Tableaux}\label{sec: two-row}
For the remainder of the paper we again assume only that the poset $(P,\le_P)$ is (3+1)-free. We have seen that we can refine $\parrays{P}$ into connected components which have $s$-positive characters, but these characters are still not single Schur functions in general. Take, for instance, the component shown in Fig. \ref{fig:not_schur} which contains two $P$-tableaux. 

However, we will develop an explicit method to write the Schur functions corresponding to certain $P$-tableaux as generating functions of disjoint connected subsets of $\parrays{P}$. These $P$-tableaux $T$, are those such that $\wt(T)_i=0$ for all $i>2$, and we call them \textbf{two-row $P$-tableaux}. If $T$ is a two-row $P$-tableau we also say the \textbf{diagram of $T$} is the image of the 1 alignment of $T$, which we denote $\D(T)$.

\begin{lemma}\label{lem: top_just}
    If $T$ is a two-row $P$-tableau then each cell in row 2 of $\D(T)$ has a cell directly above it in row 1.
\end{lemma}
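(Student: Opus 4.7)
The plan is to show that the 1 alignment of a two-row $P$-tableau $T$ has a rigid structure: row 1 occupies exactly columns $1,2,\ldots,m$, where $m=\wt(T)_1$. Once this is established, the lemma is immediate because every row 2 cell must then lie in a column that is also occupied by a row 1 cell.

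First, let the entries of rows 1 and 2 of $T$ be $a_1 <_P \cdots <_P a_m$ and $b_1 <_P \cdots <_P b_n$ respectively. The defining $P$-tableau condition gives $a_k \not>_P b_k$ for $1\le k\le n$. I would combine this with the chain orderings on rows to deduce that $b_i \not<_P a_k$ whenever $i \ge k$: the configuration $b_i <_P a_k$ with $k\le i$ would give $b_i <_P a_k \le_P a_i$, contradicting $a_i \not>_P b_i$. Using this, a direct induction on $k$ using the defining recursion for $C$ shows that $C(a_k)=k$ (since each $b_i$ with $b_i <_P a_k$ satisfies $i < k$, hence $C(b_i)=i \le k-1$, while inductively $C(a_j)=j$ for $j<k$). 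Of course $C(b_k)=k$, so in the 1 pre-alignment $a_k$ and $b_k$ both sit in column $k$, and row 1 fills columns $1$ through $m$.

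Finally, I would show that the rightward shifts producing the 1 alignment from the pre-alignment do not move any $a_i$. By property (2) of weak $r$ alignments (Definition~\ref{defn: weak_align}), each shift moves an element into an already nonempty column, so the rightmost occupied column remains $m$ throughout the procedure. Inductively, at every step $a_i$ is in column $i$: an attempted shift of $a_i$ to column $i+1$ is blocked for $i<m$ because $a_{i+1}$ occupies column $i+1$ with $a_{i+1}>_P a_i$, violating the condition in Definition~\ref{defn: align}, while for $i=m$ column $m+1$ is empty. Thus in the 1 alignment the $a_k$'s fill columns $1,\ldots,m$, and since no $b_j$ can move past column $m$, each $b_j$ ends up in a column $c\le m$ which already contains $a_c$. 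This is the desired conclusion. The argument is essentially the converse direction of the proof of Theorem~\ref{thm: hwt}, so the only real work is pinning down the inductive blocking step, and no serious obstacle is anticipated.
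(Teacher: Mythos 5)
Your argument is correct and is essentially the paper's: the paper proves this lemma in one line by citing Theorem \ref{thm: hwt} to get $e_1(T)=0$, and the converse direction of that theorem's proof is exactly the computation you carry out (each $a_k$ lands in column $k$ of the pre-alignment, the chain $a_1,\ldots,a_m$ cannot shift, and the $b_j$ never escape columns $1,\ldots,m$). You have simply inlined that existing result rather than invoking it.
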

\begin{proof}
    By Theorem \ref{thm: hwt} we have $e_1(T)=0$ from which the result follows.
\end{proof}

For $T$ a two-row $P$-tableau we have $s_{\wt(T)}=s_{\wt(\D(T))}$ which, by Lemma \ref{lem: top_just} and Theorem \ref{thm: diagrams}, is the character of the connected component of $\D(T)$ in the diagram crystal. In order to write $s_{\wt(T)}$ as the generating function for a set of $P$-arrays, we therefore want to associate the diagrams with highest weight $\D(T)$ to some $P$-arrays in a weight preserving way. Let $\mathcal D(T)$ denote this set of diagrams.

\begin{definition}\label{def: filling}
    Let $T$ be a two-row $P$-tableau, and $D\in\mathcal{D}(T)$. The \textbf{filling} or \textbf{diagram filling} of $D$ with respect to $T$ is a bijection $\plabel{D}{T}:P\to D$ constructed column by column from left to right as follows.
    
    Suppose we have determined the entries mapped to all cells strictly left of some column $c$. If column $c$ of the 1 alignment of $T$ contains a unique entry, we map it to the unique cell in column $c$ of $D$. Otherwise, column $c$ of the 1 alignment contains two entries $x_1,x_2\in P$ in rows 1 and 2 respectively. We assign $x_1$ and $x_2$ to the cells in column $c$ according to the first of the following rules whose prerequisites are met.
    \begin{enumerate}
        \item Suppose some $y\in P$ is mapped to the topmost cell $(r,c-1)$ in column $c-1$ of $D$, and that there is exactly one $x_i$ greater than $y$. Then we let $s\le r$ be maximal such that $(s,c)\in D$ and we map $x_i$ to $(s,c)$ and the remaining poset element to the remaining cell.
        
        \item Suppose some $y\in P$ is mapped to the lowest cell $(r,c-1)$ in column $c-1$ of $D$, and that there is exactly one $x_i$ greater than $y$. Then we let $s\le r$ be maximal such that $(s,c)\in D$ and we map $x_i$ to $(s,c)$ and the remaining poset element to the remaining cell.
        
        \item Map $x_1$ and $x_2$ to the upper and lower cells in column $c$ of $D$ respectively.
    \end{enumerate}
\end{definition}
\begin{proposition}
	Let $T$ be a two-row $P$-tableau, and $D\in\mathcal D(T)$. The diagram filling $\plabel{D}{T}$ is well-defined.
\end{proposition}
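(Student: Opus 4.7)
The plan is to proceed by induction on the column index $c$, showing at each stage that the procedure in Definition \ref{def: filling} assigns a unique poset element to each cell in column $c$ of $D$, thereby building up a bijection between $P$ and $D$.

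The first observation is that the diagram crystal operators move cells only between rows within a fixed column, so the number of cells in each column is invariant under the crystal. Combined with the definition of $\mathcal D(T)$ as the component of $\D(T)$ in the diagram crystal and the fact that $\D(T)$ is by definition the image of the $1$ alignment of $T$, this implies that for any $D \in \mathcal D(T)$, the number of cells in column $c$ of $D$ equals the number of entries in column $c$ of the $1$ alignment of $T$. Hence, when column $c$ of the $1$ alignment has a unique entry, column $c$ of $D$ has a unique cell, and the first bullet in Definition \ref{def: filling} gives an unambiguous assignment.

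For the case when column $c$ of the $1$ alignment contains two entries $x_1, x_2$, the column of $D$ also has two cells. Rule (3) trivially gives a valid assignment of $x_1$ and $x_2$ to the two cells. Since the rules are tried in strict priority order, no conflict can arise between them; the only substantive well-definedness issue is whether, when Rule (1) or Rule (2) is invoked, the cell $(s,c) \in D$ with $s \le r$ actually exists. The key structural claim I would establish is: if column $c$ of $D \in \mathcal D(T)$ has two cells, then the topmost cell of column $c$ lies in a row weakly above the topmost cell of column $c-1$ (needed for Rule (1)), and likewise weakly above the lowest cell of column $c-1$ (needed for Rule (2)). This claim rests on Proposition \ref{prop: col_pairs}: since $\D(T)$ is top-justified, the number of column $(c-1)$-pairs in $\D(T)$ is $\min(|\mathrm{col}(c-1)|, |\mathrm{col}(c)|)$, and this invariant transfers to $D$. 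When column $c$ has two cells, both must be column-paired with cells in column $c-1$ weakly below them; iteratively unpacking the definition of column pairing shows that the topmost cell of column $c$ must have a partner at or below the topmost cell of column $c-1$, giving the required row inequality (and an analogous argument works for the lowest cell of column $c-1$).

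Once the structural claim is in place, the choices specified by Rules (1) and (2) are well defined: by hypothesis of these rules, exactly one of $x_1, x_2$ exceeds $y$, so the $x_i$ to be placed at $(s,c)$ is uniquely determined, and the other $x_j$ maps to the remaining cell of column $c$ (which is distinct, since column $c$ has exactly two cells). The main obstacle I anticipate is the structural claim: converting the column-pairing invariant of Proposition \ref{prop: col_pairs} into explicit row comparisons requires a careful iterative analysis of the column pairing, especially to identify the partners of the topmost and lowest cells of column $c-1$ within column $c$.
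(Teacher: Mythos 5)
Your argument is correct and takes essentially the same route as the paper: both reduce well-definedness to the existence of the cell $(s,c)$ with $s\le r$ presumed by rules (1) and (2), and both obtain it from Proposition \ref{prop: col_pairs} together with the top-justified diagram $\D(T)$ in the same crystal component. The iterative analysis you anticipate as the main obstacle is unnecessary (and your intermediate claim that \emph{both} cells of column $c$ are column-paired fails when column $c-1$ has only one cell): it suffices that every cell of column $c-1$ is column $(c-1)$-paired, since by definition such a cell lies weakly below its partner in column $c$, which is exactly the required $(s,c)$.
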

\begin{proof}
	If column $c>1$ contains two cells in $D$, then Proposition \ref{prop: col_pairs} implies each cell in column $c-1$ must be column $(c-1)$-paired. Then for any $(r,c-1)\in D$ there exists $s\le r$ with $(s,c)\in D$ as presumed in rules (1) and (2).
\end{proof}

We denote the set of diagram fillings by
\[
\df{P}=\{ \plabel{D}{T} \mid \text{ $T$ is a two-row $P$-tableau and } D\in\mathcal D(T) \}
\]
and the set of diagram fillings for a fixed two-row $P$-tableau $T$ by
\[
\df{P,T}=\{ \plabel{D}{T} \mid D\in\mathcal D(T) \}.\]

The point of this construction is that each row of a diagram filling increases in $\le_P$ from left to right, so the filling specifies a $P$-array. We will prove this shortly.

\begin{lemma}\label{lem: no_flip}
    Let $T$ be a two-row $P$-tableau, and $D\in\mathcal D(T)$. Let $x_1,x_2\in P$ be in the same column $c$ in the $1$ alignment of $T$ with $x_1$ in row 1 and $x_2$ in row 2. Suppose they satisfy the condition that whenever we have a distinct element $y<_Px_1$ we have $y <_P x_2$ as well. Then $\plabel{D}{T}(x_1)$ is in a row above $\plabel{D}{T}(x_2)$. In particular, this applies when $x_1<_P x_2$.
\end{lemma}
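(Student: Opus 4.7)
The plan is a case analysis on which of the three rules in Definition \ref{def: filling} determines the placement of $x_1$ and $x_2$ in column $c$ of $D$. By Proposition \ref{prop: col_pairs}, $D$ has the same column-cell counts as $\D(T)$, so column $c$ of $D$ contains exactly two cells, say $(r_1, c)$ and $(r_2, c)$ with $r_1 < r_2$, and column $c-1$ has at most two cells at rows $s_1 \le s_2$. Since $\D(T)$ is two-row, Lemma \ref{lem: max_descent} forces every descending sequence in $D$ to have length at most $2$; applying this between columns $c-1$ and $c$ yields $s_1 \ge r_1$ and, when $s_2$ exists, $s_2 \ge r_2$.

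The first substantive step is a preliminary claim: any element $y$ placed by $\plabel{D}{T}$ in column $c-1$ of $D$ satisfies $y <_P x_2$. Indeed $y$ belongs to column $c-1$ of the $1$ alignment of $T$, so Property (4) of Definition \ref{defn: weak_align} provides some $w \in \{x_1, x_2\}$ with $y <_P w$; if $w = x_1$ the hypothesis upgrades this to $y <_P x_2$, and otherwise it is immediate. Consequently, in every invocation of rule (1) or (2) at column $c$, the witnessing $y$ is below $x_2$, so $x_2 >_P y$ automatically, and the ``exactly one'' prerequisite of the rule forces $y \not<_P x_1$. In every non-default case $x_2$ is the designated element to be placed at $(s, c)$, and only its row is in question.

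Rule (3) gives the conclusion directly. For rule (2), if column $c-1$ has a single cell it is filled with $T_{1, c-1}$, which satisfies $T_{1, c-1} <_P x_1$ by the row-$1$ chain of $T$, so the trigger fails and we fall through to rule (3); otherwise the inequality $s_2 \ge r_2$ forces the maximal $s \le s_2$ with $(s, c) \in D$ to equal $r_2$, placing $x_2$ at the lower cell as desired.

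The main obstacle is rule (1), where the descending-sequence bound yields only $s_1 \ge r_1$ and not $s_1 \ge r_2$. My proposed resolution is to prove by induction on $c$ that in any configuration with $s_1 < r_2$, the topmost cell of column $c-1$ in $\plabel{D}{T}$ is occupied by the row-$1$ alignment element $T_{1, c-1}$. The induction would track how rules (1) and (2) applied at columns $1, \dots, c-1$ can move a row-$2$ alignment element to the top of a previously-filled column, and would show that every such ``swap'' requires the local column geometry at that earlier column to fail the analogue of $s_1 < r_2$, so the bad configuration propagates out. Granting this invariant, $y = T_{1, c-1}$ satisfies $y <_P x_1$, rule (1) does not fire, and the placement falls through to rules (2) or (3), which have already been handled. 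Carrying out this inductive bookkeeping cleanly, coordinating the filling rules at each prior column with the pairing and descending-sequence invariants, is the technical heart of the proof.
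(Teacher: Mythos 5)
Your setup and your treatment of rules (2) and (3) are sound: the observation that property (4) of Definition \ref{defn: weak_align} together with the lemma's hypothesis forces every element of column $c-1$ to lie below $x_2$, so that $x_2$ is always the designated element in rules (1) and (2), matches the paper, and your use of Lemma \ref{lem: max_descent} to get $s_2\ge r_2$ and settle rule (2) is a correct, slightly different route to the conclusion the paper reaches via Proposition \ref{prop: col_pairs}. The problem is rule (1), which you correctly identify as the hard case but do not actually prove: the invariant that $s_1<r_2$ forces the row-1 alignment element to occupy the topmost cell of column $c-1$ is only announced, with the inductive bookkeeping explicitly deferred, so the argument is incomplete exactly where it matters. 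It is also unclear that this purely geometric invariant is the right tool, since which element sits on top of column $c-1$ is governed by the filling rules there rather than by cell positions alone, and the conclusion of the lemma can genuinely fail for column pairs that do not satisfy its hypothesis.

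The missing idea is that the lemma's hypothesis propagates backward one column, which is what lets the induction close and removes any case split on $s_1$. Suppose rule (1) fires at column $c$ with witness $y_2$ at the topmost cell of column $c-1$. As you note, the exactly-one prerequisite gives $y_2\not<_P x_1$, which forces $y_2$ to be the row-2 element of column $c-1$ of the alignment; let $y_1$ be the row-1 element there, which exists by Lemma \ref{lem: top_just}. For any $z<_P y_1$ we have the chain $z<_P y_1<_P x_1$, and since $y_2\not<_P x_1$ and $y_2\not<_P y_1$ by property (3) of Definition \ref{defn: weak_align}, the (3+1)-free condition forces $z<_P y_2$. Thus the pair $(y_1,y_2)$ satisfies the hypothesis of the lemma in column $c-1$, and the inductive hypothesis places $\plabel{D}{T}(y_1)$ above $\plabel{D}{T}(y_2)$, contradicting $y_2$ occupying the topmost cell of column $c-1$. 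Hence rule (1) never fires under the lemma's hypothesis, which is the step your proposal leaves open.
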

\begin{proof}
    The proof is by induction on the column index. If $c=1$ then the column is governed by rule (3) so we are done.
    
    Suppose $c>1$ and that the column's assignments are defined according to rule (2). This is to say there is some $y\in P$ assigned to the lowest cell $(r,c-1)$ in column $c-1$ of $D$, and $y<_P x_2$ since we cannot have only that $y<_P x_1$. We must have that $(r,c-1)$ is distinct from the topmost cell in column $c-1$ since rule (1) does not apply. By Proposition \ref{prop: col_pairs}, both cells in column $c-1$ are column $(c-1)$-paired which means that both cells in column $c$ are in rows weakly above $r$. Therefore, when we take $s\le r$ to be maximal such that $(s,c)\in D$ it must be the case that $(s,c)$ is the lowest cell in column $c$ and is the image of $x_2$.
    
    Finally suppose $c>1$ and that the column's assignments are defined according to rule (1). There is some $y_2\in P$ assigned to the topmost cell $(r,c-1)$ in column $c-1$, and we know that $y_2\not<_P x_1$. We therefore must have that $y_2$ is in row 2 of the 1 alignment of $T$. Let $y_1$ be the element in row 1 column $c-1$ of the 1 alignment. If we have some $z<_P y_1<_P x_1$ then also $z<_P y_2$ by the (3+1)-free condition, but the inductive hypothesis contradicts $\plabel{D}{T}(y_2)$ being the topmost cell in column $c-1$. So rule (1) never actually applies in these circumstances.
\end{proof}
\begin{remark}
    If $x_1$ and $x_2$ are comparable, then $x_1<_P x_2$ by property (3) of weak $r$ alignments, so Lemma \ref{lem: no_flip} applies.
\end{remark}

\begin{proposition}\label{prop: inc_rows}
	Let $T$ be a two-row $P$-tableau and $D\in\mathcal D(T)$. Suppose we have cells $(r_2,c_1),(r_1,c_2)\in D$ such that $r_1\le r_2$ and $c_1<c_2$. Further assume there is no distinct $(r,c)\in D$ with $r_1\le r\le r_2$ and $c_1\le c\le c_2$. Then $\plabel{D}{T}(x)=(r_2,c_1)$ and $\plabel{D}{T}(y)=(r_1,c_2)$ for some $x<_P y$.
\end{proposition}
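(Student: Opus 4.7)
My plan begins with a reduction to the hard case. Since $T$ is a highest-weight element of $\parrays{P}$ by Theorem \ref{thm: hwt}, a short direct computation of its $1$ alignment (row $1$ elements at columns $1,\ldots,\wt(T)_1$ and row $2$ elements at columns $1,\ldots,\wt(T)_2$, with no shifts occurring in passing from pre-alignment to alignment) shows that $\D(T)$ is exactly the Young diagram of $\wt(T)$. By Proposition \ref{prop: col_pairs}, each $D\in\mathcal D(T)$ then has column $c$ containing two cells for $c\le \wt(T)_2$ and one cell for $\wt(T)_2 < c\le \wt(T)_1$. If $c_2 > \wt(T)_2$, column $c_2$ of the $1$ alignment contains only $y$, and Lemma \ref{lem:col_ordering} applied at column $c_2$ gives $x <_P y$ directly. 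So I may assume $c_2 \le \wt(T)_2$, i.e.\ every column from $c_1$ to $c_2$ contains two elements in the $1$ alignment and two cells in $D$.

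For this doubled case I would induct on $c_2 - c_1$. Write $x_1 <_P x_2$ for the two elements in column $c_1$ of the $1$ alignment (so $x \in \{x_1,x_2\}$) and $y_1 <_P y_2$ for those in column $c_2$. In the base case $c_2 = c_1 + 1$, I would case-split on which of rules (1), (2), (3) is applied at each of columns $c_1$ and $c_2$, and further on whether the ``other cell'' in each of these columns lies above row $r_1$ or below row $r_2$ (the only positions allowed by the rectangle condition). Property (4) of weak $r$ alignments at column $c_1$ forces at least one of $y_1, y_2$ to strictly exceed $x_2$; Lemma \ref{lem: no_flip} pins down the filling in the generic configuration where $x_1 <_P x_2$ both lie below a common greater element of column $c_2$; and the (3+1)-free hypothesis on $P$ converts the remaining partial-order data on triples like $\{x_1, x_2, y_i\}$ or $\{x_i, y_1, y_2\}$ into $x <_P y$. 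For the inductive step $c_2 - c_1 > 1$, I would select an intermediate column $c$ with $c_1 < c < c_2$ and a cell $(s, c)\in D$ with $s$ chosen row-extremally (say, closest to $r_1$ from above); such a cell exists since column $c$ is doubled while rows $[r_1, r_2]$ are empty there. The extremality of $s$ combined with the original rectangle condition should keep the two sub-rectangles with corners $(r_2,c_1), (s,c)$ and $(s,c), (r_1,c_2)$ empty as well, so the inductive hypothesis produces $x <_P \plabel{D}{T}^{-1}((s,c)) <_P y$.

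The main obstacle is the base-case enumeration. Rules (1) and (2) reference the topmost and lowermost cells of column $c_1 - 1$ in $D$, whose values may themselves reflect an earlier cascade of rules. I expect most configurations to be dispatched immediately by Lemma \ref{lem: no_flip} (whose hypothesis is rather mild), leaving only a short list of configurations where a direct appeal to the (3+1)-free condition closes the argument. A secondary concern is confirming that the extremal intermediate cell in the inductive step genuinely produces valid sub-rectangles; this should follow from the doubled-column structure of $D$ together with the row-extremal choice of $s$, which prevents any stray cell of $D$ from slipping between the corners.
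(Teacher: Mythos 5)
Your opening reduction rests on a false structural claim: $\D(T)$ need not be the Young diagram of $\wt(T)$, because row-$2$ entries of a two-row $P$-tableau can shift rightward in passing from the $1$ pre-alignment to the $1$ alignment. For instance, take $P=\{u_1<_P u_2\}\cup\{v\}$ with $v$ incomparable to both $u_i$, and let $T$ have first row $u_1,u_2$ and second row $v$; this is a two-row $P$-tableau, yet $v$ shifts to column $2$ and $\D(T)=\{(1,1),(1,2),(2,2)\}$, not the Young diagram of $(2,1)$ (compare Lemma \ref{lem: top_just}, which asserts only top-justification). Consequently the doubled columns of $D$ need not form an initial segment, your dichotomy ``$c_2>\wt(T)_2$ versus all columns from $c_1$ to $c_2$ doubled'' does not exhaust the cases (a single-entry column can sit strictly between $c_1$ and $c_2$, and column $c_1$ itself can be single), and you also write $x_1<_Px_2$ for the two entries of column $c_1$ even though two entries sharing a column of the alignment are in general incomparable. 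The correct reduction is the one the paper makes: if \emph{some} column $d$ with $c_1<d\le c_2$ carries a unique entry, conclude directly from Lemma \ref{lem:col_ordering}; otherwise all of columns $c_1+1,\ldots,c_2$ are doubled, with no assumption on column $c_1$.

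The more serious gap is the inductive step: the empty-rectangle hypothesis does not pass to your sub-rectangles. In the all-doubled case, Proposition \ref{prop: col_pairs} forces both cells of each of columns $c_1,\ldots,c_2-1$ to be column-paired, which makes the topmost cell of each such column lie weakly \emph{below} the topmost cell of the column to its right; together with the original rectangle condition this places the topmost cells of the columns strictly between $c_1$ and your chosen $c$ in rows weakly between $s$ and $r_1-1$, i.e.\ inside the sub-rectangle with corners $(r_2,c_1)$ and $(s,c)$ (and even for $c=c_1+1$ the second cell of the doubled column $c_1$ can land there). So the inductive hypothesis cannot be invoked, and no row-extremal choice of $s$ repairs this. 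The paper does not induct at all: for $c_2>c_1+1$ it uses exactly this pairing monotonicity to show $(r_1,c_2)$ must be the \emph{lowest} cell of its column, then produces a chain $x<_Pz<_Pw$ of alignment entries in consecutive columns via property (4) of weak alignments, rules out $y<_Pw$ by Lemma \ref{lem: no_flip}, and concludes $x<_Py$ from (3+1)-freeness. Your base case is likewise only a plan; the paper's actual argument there splits on whether $(r_2,c_1)$ is the topmost cell of its column and, when it is not, tracks which of rules (1) and (2) governs column $c_2$ --- that is where the real work lies, and it is not dispatched by Lemma \ref{lem: no_flip} alone.
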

\begin{proof}
	Let $x,y\in P$ be the entries mapped to the cells $(r_2,c_1)$ and $(r_1,c_2)$ respectively. If there is some column $d$ with $c_1<d\le c_2$ that contains only one entry in the 1 alignment, then $x<_P y$ by Lemma \ref{lem:col_ordering}.
	
	We may proceed under the assumption that there is no such column $d$, and we first deal with the subcase where $c_2=c_1+1$. Suppose $(r_2,c_1)$ is the topmost cell in column $c_1$. If every entry in column $c_2$ of the 1 alignment of $T$ is greater than $x$, we are done. Otherwise, by definition of weak $r$ alignments there is one entry in column $c_2$ that is greater than $x$, and one entry in column $c_2$ that is not. Rule (1) of the definition of diagram fillings therefore applies, and ensures that the entry greater than $x$ is assigned to $(r_1,c_2)$. That is, $x<_P y$.
	
	Suppose instead that there is $(r,c_1)\in D$ with $r<r_2$, hence $r< r_1$ by assumption. Say $\plabel{D}{T}(z)=(r,c_1)$. If column $c_2$ of the filling is defined according to rule (1), then there is a single entry $w$ in column $c_2$ of the 1 alignment of $T$ that is greater than $z$. We must have that $z$ and $w$ share a row in $T$. Moreover, $w\ne y$ since $w$ must be mapped to a row weakly less than $r<r_1$. So $x$ and $y$ share a row in $T$ implying $x<_P y$. If column $c_2$ of the filling is not defined according to rule (1), and every entry in column $c_2$ of the 1 alignment of $T$ is greater than $x$, we again have $x<_P y$ in particular. If column $c_2$ of the filling is not defined according to rule (1) and there is exactly one entry in column $c_2$ of the alignment greater than $x$, then column $c_2$ of the filling is defined according to rule (2). This ensures that the entry greater than $x$ is mapped to $(r_1,c_2)$, which is to say $x<_P y$.
	
	Now suppose $c_2> c_1+1$, still assuming there is no column $d$ with $c_1<d\le c_2$ containing just one cell. Proposition \ref{prop: col_pairs} implies that each cell in a column $d$ with $c_1\le d<c_2$ is column $d$-paired. This further implies that the topmost cell in each such column $d$ is in a row weakly below the topmost cell in column $d+1$. Then $(r_1,c_2)$ cannot be the topmost cell in column $c_2$, else there would be some $(r,c_2-1)\in D$ with $r_1\le r\le r_2$. So $(r_1,c_2)$ is the lowest cell in its column. Since the 1 alignment of $T$ is a weak 1 alignment, there is a chain $x<_P z<_P w$ of entries in columns $c_1,c_1+1,c_2$. If $y=w$ we are done, so assume not. We must have $y\not<_P w$ by Lemma \ref{lem: no_flip} since $(r_1,c_2)$ is the lowest cell in column $c_2$. Then $x<_P y$ by the (3+1)-free condition.
\end{proof}

In particular, if we apply Proposition \ref{prop: inc_rows} when $r_1=r_2$, we see that the entries in each row of the diagram fillings form a chain. We have therefore justified our earlier claim that each diagram filling gives us a $P$-array. Explicitly, for $L\in\df{P}$ we write $\arrayify(L)$ to mean the $P$-array $\{A_{i,j}\}$ where $L(A_{i,j})$ is the $j$th cell from the left in row $i$ of $\operatorname{Im}(L)$.
Let
\[\dfa{P}=\{ \arrayify(L) \mid L\in \df{P}\}\]
denote the set of $P$-arrays obtained from diagram fillings, and let
\[\dfa{P,T}=\{ \arrayify(L) \mid L\in \df{P,T}\}\] denote the set of $P$-arrays obtained from diagram fillings with a fixed two-row $P$-tableau $T$. 

\begin{lemma}\label{lem: highest_filling}
    Let $T$ a two-row $P$-tableau. We have $\arrayify(\plabel{\D(T)}{T})=T$.
\end{lemma}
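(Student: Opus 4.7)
The plan is to prove $\arrayify(\plabel{\D(T)}{T})=T$ by induction on the column index $c$, showing that the filling assigns each entry to the same row as in $T$ itself. Since $T$ is a $P$-tableau, Theorem \ref{thm: hwt} gives $e_1(T)=0$, and the proof of that theorem shows that in the 1 alignment of $T$ the row 1 entries $a_1<_P\cdots<_P a_m$ stay fixed in columns $1,\ldots,m$ respectively, while the row 2 entries $b_1<_P\cdots<_P b_n$ each occupy some column containing an $a_i$. Moreover $\D(T)$ is top-justified by Lemma \ref{lem: top_just}, so any nonempty column of $\D(T)$ has row 1 occupied.

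The base case $c=1$ is immediate: rule (3) applies (no prior column), so $a_1$ goes to $(1,1)$ and, if present, $b_1$ goes to $(2,1)$, matching $T$. For the inductive step, assume the filling agrees with $T$ in all columns strictly left of $c$. If column $c$ of the 1 alignment has a single entry, it must be $a_c$, and top-justification forces the unique cell of column $c$ in $\D(T)$ to be $(1,c)$, so $a_c$ is correctly placed. Otherwise column $c$ contains $a_c$ in row 1 and some $b_k$ in row 2, and $\D(T)$ has cells $(1,c),(2,c)$; we must check the rules send $a_c$ to $(1,c)$ and $b_k$ to $(2,c)$.

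By the inductive hypothesis, the topmost cell $(1,c-1)$ of column $c-1$ in $\D(T)$ holds $a_{c-1}$, and when that column also has a lower cell it holds some $b_{k'}$ with $k'<k$ (since row 2 entries occupy strictly increasing columns). We exploit three key facts: $a_c>_P a_{c-1}$ (chain property of row 1); $b_k>_P b_{k'}$ when $b_{k'}$ exists (chain property of row 2); and $b_k\not<_P a_c$, which follows from property (3) of weak 1 alignments since $a_c$ and $b_k$ share a column. We then run through rules (1)--(3) in order. If rule (1) applies (exactly one of $a_c,b_k$ exceeds $a_{c-1}$), the greater element must be $a_c$ and is sent to the maximal available row $s\le 1$, i.e.\ $s=1$, putting $a_c$ at $(1,c)$ and $b_k$ at $(2,c)$. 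If rule (1) fails because both exceed $a_{c-1}$, and rule (2) applies via the lower cell $b_{k'}$ (exactly one of $a_c,b_k$ exceeds $b_{k'}$), then that unique greater element must be $b_k$, which is sent to the maximal $s\le 2$, namely $s=2$, so $b_k$ lands at $(2,c)$ and $a_c$ at $(1,c)$. If both rules fail, rule (3) applies, which directly places $x_1=a_c$ on top and $x_2=b_k$ below.

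The main obstacle is the bookkeeping in this last paragraph: verifying that whenever rules (1) or (2) actually do apply in the two-cell case, the "unique greater" element is the one whose target row matches the row it occupies in $T$. This reduces to checking the two small facts above ($a_c>_P a_{c-1}$ and $b_k>_P b_{k'}$) together with noting that rule (2) can only be reached when $a_c$ also exceeds $a_{c-1}$, so the contest between $a_c$ and $b_k$ in rule (2) is decided against $b_{k'}$ in the lower row, where $b_k$ is automatically greater. Once this case analysis is complete, the induction closes and yields $\arrayify(\plabel{\D(T)}{T})=T$.
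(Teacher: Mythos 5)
Your proof is correct and follows essentially the same route as the paper's: induction on the column index, with a case analysis over rules (1)--(3) that uses the chain property of the rows of $T$ (so $a_c>_P a_{c-1}$ and $b_k>_P b_{k'}$) together with top-justification of $\D(T)$ to see that whichever rule fires, the element it singles out lands in the row it already occupies in $T$. Your write-up is more explicit than the paper's (which compresses the rule-(1)/(2) cases into the single observation that such a rule only fires when some $x$ in column $c-1$ satisfies $x\not<_P y$ for a $y$ in column $c$, forcing $x$ and $y$ into opposite rows of both $T$ and the filling), but the argument is the same.
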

\begin{proof}
    If some column $c$ of $\plabel{\D(T)}{T}$ is not defined according to rule (1) or (2) in Definition \ref{def: filling}, then the column assignments of entries in this column agree with $T$. Rules (1) and (2) only apply if there are some $x,y$ in columns $c-1$ and $c$ of the 1 alignment of $T$ respectively with $x\not<_P y$. Then $y$ must be placed in the row opposite to $x$ in both $T$ and $\arrayify(\plabel{\D(T)}{T})$. By induction, we see that $T$ and $\arrayify(\plabel{\D(T)}{T})$ must agree on the rows of all elements in column $c$ of the 1 alignment of $T$.
\end{proof}

\begin{lemma}\label{lem: ascend}
    Let $T$ be a two-row $P$-tableau and $D\in\mathcal D(T)$. Let $x\in P$ and say $\plabel{D}{T}(x)=(r,c)$. If column $c+1$ of $D$ contains two cells, then there exists $y>_P x$ with $\plabel{D}{T}(y)=(s,c+1)$ for some $s\le r$.
\end{lemma}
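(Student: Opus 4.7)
The plan is to obtain Lemma~\ref{lem: ascend} as an immediate corollary of Proposition~\ref{prop: inc_rows}. Concretely, I would take $s=r$ and let $y$ be the element sitting directly to the right of $x$ in row $r$ of $D$, and then apply Proposition~\ref{prop: inc_rows} to the two cells $(r,c)$ and $(r,c+1)$.

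First, I would use the hypothesis that column $c+1$ of $D$ contains two cells to conclude that both $(1,c+1)$ and $(2,c+1)$ lie in $D$. In particular $(r,c+1)\in D$, so I may let $y\in P$ be the unique element with $\plabel{D}{T}(y)=(r,c+1)$. This is the step where the two-cell hypothesis is genuinely needed: if column $c+1$ consisted of a single cell in the row opposite to $r$, the cell $(r,c+1)$ would be absent and there would be no candidate $y$ at a row $\le r$.

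Next, I would invoke Proposition~\ref{prop: inc_rows} with $(r_2,c_1)=(r,c)$ and $(r_1,c_2)=(r,c+1)$. The required inequalities $r_1\le r_2$ and $c_1<c_2$ hold (the first with equality, the second because $c<c+1$), and the ``no other cell of $D$ in the rectangle'' hypothesis is automatic: the rectangle $\{(r',c')\in\Z_+\times\Z_+ : r\le r'\le r,\ c\le c'\le c+1\}$ is the two-cell strip $\{(r,c),(r,c+1)\}$, since no integer lies strictly between $c$ and $c+1$. The proposition then yields $x<_P y$, and taking $s=r$ gives $\plabel{D}{T}(y)=(s,c+1)$ with $s\le r$ and $y>_P x$.

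There is essentially no substantive obstacle here; the argument is a one-line specialization of Proposition~\ref{prop: inc_rows} to two cells in a common row, so the only care required is to check the inequalities and the emptiness of the rectangle, both of which are trivial once the cell $(r,c+1)$ is known to belong to $D$.
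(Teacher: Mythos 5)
Your argument rests on the claim that ``column $c+1$ of $D$ contains two cells'' forces $(1,c+1),(2,c+1)\in D$, and hence $(r,c+1)\in D$. This is false: the diagrams in $\mathcal D(T)$ are everything reachable from $\D(T)$ under the diagram crystal operators $e_i,f_i$ for \emph{all} $i\ge 1$, and the lowering operators $f_2,f_3,\ldots$ push cells into rows $3,4,\ldots$. For a concrete counterexample, take $T$ with $\wt(T)=(2,2)$, so $\D(T)=\{(1,1),(1,2),(2,1),(2,2)\}$, and apply $f_2$ to get $D=\{(1,1),(1,2),(2,2),(3,1)\}\in\mathcal D(T)$. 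With $x=\plabel{D}{T}^{-1}(3,1)$ and $c=1$, column $2$ contains two cells, yet $(3,2)\notin D$, so there is no candidate $y$ in row $r=3$ of column $c+1$ and your invocation of Proposition~\ref{prop: inc_rows} with $(r_1,c_2)=(r,c+1)$ has nothing to apply to. This is exactly why the lemma is stated with ``for some $s\le r$'' rather than $s=r$: the witnessing cell generally sits strictly above row $r$.

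The paper's proof handles this by arguing directly from the construction of the filling rather than reducing to Proposition~\ref{prop: inc_rows}. It splits on whether $(r,c)$ is the topmost cell of column $c$ or a lower one. In the first case, property (4) of weak alignments guarantees some entry of column $c+1$ of the $1$ alignment exceeds $x$; if exactly one does, rule (1) of Definition~\ref{def: filling} places it weakly above row $r$, and if both do, Proposition~\ref{prop: col_pairs} guarantees at least one lands weakly above row $r$. In the second case, Proposition~\ref{prop: col_pairs} forces every cell of column $c+1$ to lie weakly above row $r$, and again at least one corresponds to an element greater than $x$. To repair your proof you would need to replace ``$(r,c+1)\in D$'' with ``some $(s,c+1)\in D$ with $s\le r$'' (which does follow from column pairing) and then justify $x<_P y$ for a suitable such cell; note that Proposition~\ref{prop: inc_rows} applied to $(r,c)$ and $(s,c+1)$ also requires the rectangle between them to contain no other cell of $D$, which is an additional hypothesis you would have to verify rather than getting for free.
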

\begin{proof}
    Suppose $(r,c)$ is the topmost cell in column $c$ of $D$. If there is exactly one $y>_P x$ in column $c+1$ of $\plabel{D}{T}$ then rule (1) in the definition of $\plabel{D}{T}$ ensures it resides in a row weakly above $r$. Otherwise, both elements in column $c+1$ are greater than $\plabel{D}{T}(r,c)$, with at least one still being weakly above row $r$ by Proposition \ref{prop: col_pairs}.
    
    Suppose instead that $(r,c)$ is the lower cell in column $c$ and that it is distinct from the topmost cell. Then all cells in column $c+1$ of $D$ are weakly above row $r$ by Proposition \ref{prop: col_pairs}, and at least one such cell corresponds to an entry greater than $x$ by Proposition \ref{prop:algn_char}.
\end{proof}
\begin{lemma}\label{lem: nw}
    Let $T$ be a two-row $P$-tableau and $D\in\mathcal D(T)$. Suppose for some $x,y\in P$ we have $\plabel{D}{T}(x)=(r_1,c_1)$ and $\plabel{D}{T}(y)=(r_2,c_2)$ with $r_1\le r_2$ and $c_1\le c_2$. Then $x\not>_P y$.
\end{lemma}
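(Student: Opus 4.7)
The plan is to case-split by which of the two possible rows contain $x$ and $y$ (since $D$ has at most two rows), and then by whether $c_1 = c_2$ or $c_1 < c_2$. When $r_1 = r_2$, the conclusion is immediate from the remark following Proposition \ref{prop: inc_rows}: each row of $\arrayify(\plabel{D}{T})$ is a chain, so $x \le_P y$. The remaining case is $r_1 = 1 < r_2 = 2$, which I handle in two subcases.

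For $r_1 = 1$, $r_2 = 2$, $c_1 = c_2$, I would use that the two entries of column $c_1$ of the $1$ alignment, say $z_1, z_2$ in rows $1$ and $2$, satisfy $z_2 \not<_P z_1$ by property (3) of a weak alignment. If $z_1 <_P z_2$, Lemma \ref{lem: no_flip} forces $x = z_1$ and $y = z_2$, giving $x <_P y$; if $z_1 \parallel z_2$, then $x \parallel y$ for either assignment of the filling. In either situation $x \not>_P y$.

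The main case is $r_1 = 1 < r_2 = 2$ with $c_1 < c_2$. Since the diagram crystal operators $e_r, f_r$ move cells only within their column, $D$ inherits the column lengths of $\D(T)$, and in particular the number of entries in each column of the $1$ alignment of $T$ matches the number of cells in that column of $D$. If column $c_2$ of $D$ contains only the cell $(2, c_2)$, then the $1$ alignment has a unique entry in column $c_2$ (namely $y$), and Lemma \ref{lem:col_ordering} directly gives $x <_P y$ since $x$ sits strictly to the left. If column $c_2$ has two cells, set $y' = \plabel{D}{T}^{-1}(1, c_2)$; the row-chain observation yields $x <_P y'$. Letting $z_1, z_2$ denote the column $c_2$ entries of rows $1$ and $2$ of the $1$ alignment, we have $\{y, y'\} = \{z_1, z_2\}$ and $z_2 \not<_P z_1$. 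Assuming $x >_P y$ for contradiction: if $z_1 <_P z_2$, Lemma \ref{lem: no_flip} identifies $y' = z_1 <_P z_2 = y$ and transitivity yields $x >_P y'$, contradicting $x <_P y'$; if $z_1 \parallel z_2$, then $y \parallel y'$, yet $y <_P x <_P y'$ would force $y <_P y'$, again a contradiction.

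The main obstacle is this last two-cell subcase, where the auxiliary entry $y'$ mediates between row $1$ (via the row chain) and the comparability structure in column $c_2$ (via the two-entry assignment $\{z_1, z_2\}$). Lemma \ref{lem: no_flip} handles the comparable case, and the incomparable case becomes a clean transitivity argument through $x$; without the intermediate $y'$, there is no obvious way to rule out $x >_P y$ from purely local column information.
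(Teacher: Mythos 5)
There is a genuine gap, and it starts with your opening premise that ``$D$ has at most two rows.'' The set $\mathcal D(T)$ is the entire connected component of $\D(T)$ in the diagram crystal, and the lowering operators $f_2,f_3,\ldots$ push cells down into rows $3,4,\ldots$ (see Fig.~\ref{fig:diag}); what is preserved is only the number of cells per \emph{column} (at most two), not confinement to rows $1$ and $2$. This breaks your main case. When $c_1<c_2$ and column $c_2$ has two cells, you set $y'=\plabel{D}{T}^{-1}(1,c_2)$ and invoke the row-chain property to get $x<_P y'$ --- but that requires the second cell of column $c_2$ to lie in the \emph{same row as $x$}, which need not happen. For instance, from $\D(T)=\{(1,1),(1,2),(2,2)\}$ one reaches $D=\{(2,1),(1,2),(3,2)\}$ by applying $f_2$ and then $f_1$; taking $x$ at $(2,1)$ and $y$ at $(3,2)$, the other cell of column $2$ sits in row $1$, not row $2$, so there is no $y'$ in the row of $x$ and the step ``$x<_P y'$'' has nothing to stand on. Your same-column subcase and your final endgame (property (3) of weak alignments plus Lemma~\ref{lem: no_flip} plus transitivity) are sound and match the paper's, but they are not the hard part.

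The missing ingredient is precisely what the paper supplies with Lemma~\ref{lem: ascend}: after reducing (via Lemma~\ref{lem:col_ordering}) to the case where every column $d$ with $c_1<d\le c_2$ has two cells, one builds an increasing chain $x=x_1<_P\cdots<_P x_k$ whose cells step one column to the right and weakly \emph{upward} at each stage, terminating at a cell of column $c_2$ in a row weakly above $r_1$. This chain is what bridges the arbitrary vertical offset between $(r_1,c_1)$ and $(r_2,c_2)$; if $x_k=y$ you are done, and otherwise $x_k$ lies strictly above $y$ in column $c_2$ and $y<_P x<_P x_k$ contradicts Lemma~\ref{lem: no_flip} exactly as in your transitivity argument. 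To repair your proof you would need to replace the single auxiliary element $y'$ with such a chain (or otherwise control the rows of the cells of column $c_2$ relative to $r_1$), at which point you have essentially reconstructed the paper's argument.
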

\begin{proof}
    If there is some column $d$ with $c_1<d\le c_2$ that contains only one cell in $D$, then the result follows by Lemma \ref{lem:col_ordering}. So assume not.
    
   	In light of Lemma \ref{lem: ascend} we may take a chain $x=x_1<_Px_2<_P\cdots<_Px_k$ where each consecutive element is one column to the right, and in a row weakly above the position of the previous element, and $x_k$ is in column $c_2$ of $\plabel{D}{T}$. If $x_k= y$ we are done, and if not then $x_k$ is in a weakly row above $r_1$ and therefore strictly above $r_2$. Now $y<_P x<_P x_k$ would be a contradiction of Lemma \ref{lem: no_flip}.
\end{proof}

\begin{lemma}\label{lem: dfa_align}
	Let $T$ be a two-row $P$-tableau and let $D\in\mathcal D(T)\setminus \{\D(T)\}$. Let $r\ge 1$ be minimal such that $e_r(D)\ne 0$. Let $x\in P$ be the element such that $\plabel{D}{T}(x)$ is the rightmost cell in row $r+1$ that is not $r$-paired. Then $x$ is the rightmost entry in row $r+1$ of the $r$ alignment of $\arrayify(\plabel{D}{T})$ that does not share a column with an entry in row $r$.
\end{lemma}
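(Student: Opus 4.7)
Let $A = \arrayify(\plabel{D}{T})$ and write its rows $r$ and $r+1$ as $a_1 <_P \cdots <_P a_m$ and $b_1 <_P \cdots <_P b_n$. By Proposition \ref{prop: inc_rows} applied to $\plabel{D}{T}$, the element $a_i$ is the image of the $i$-th cell from the left in row $r$ of $D$, and likewise each $b_j$ is the image of the $j$-th cell from the left in row $r+1$. Set $x = b_k$, so $\plabel{D}{T}(b_k)$ is the rightmost unpaired row $r+1$ cell of $D$ by hypothesis; note also that $\plabel{D}{T}(b_k)$ having no row $r$ cell above it in the same column, since otherwise that column would $r$-pair with $b_k$.

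The plan is to establish two complementary statements whose conjunction gives the lemma: (a) for every $j > k$, the entry $b_j$ shares a column in the $r$-alignment of $A$ with some row $r$ entry; and (b) $b_k$ does not share a column with any row $r$ entry in the $r$-alignment.

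For (a), the rightmost-unpaired hypothesis forces every cell in rows $r, r+1$ of $D$ strictly right of $\plabel{D}{T}(b_k)$ to be $r$-paired, so these cells split evenly between the two rows; writing $p = m - (n - k)$, the $n-k$ row $r$ cells to the right of $\plabel{D}{T}(b_k)$ correspond to $a_{p+1}, \ldots, a_m$ and the analogous row $r+1$ cells to $b_{k+1}, \ldots, b_n$. I would use Proposition \ref{prop: inc_rows} to extract the comparability $<_P$ whenever two cells lie in the ``southwest/northeast'' configuration within this region, and combine this with Lemma \ref{lem: nw} and the (3+1)-free condition to show that in the pre-alignment of $A$ the column indices of $a_{p+i}$ and $b_{k+i}$ agree for each $1 \le i \le n - k$. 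No rightward shift in the construction of the alignment disturbs these coincidences, because any such shift would require the presence in the adjacent column of an entry not greater than the one being shifted, which the balanced-pair structure rules out. For (b), I would argue by contradiction: if $b_k$ shared a column with some $a_i$ in the $r$-alignment, then property (3) of weak $r$-alignments and Lemma \ref{lem:col_ordering} would position $\plabel{D}{T}(a_i)$ in $D$ in a configuration from which, via the filling rules of Definition \ref{def: filling} and the minimality of $r$, one could exhibit an $r$-pair for $\plabel{D}{T}(b_k)$ in $D$, contradicting the hypothesis.

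The main obstacle I anticipate is part (a): translating the column-geometric notion of $r$-pairing in $D$ into the poset-geometric notion of column-sharing in the $r$-alignment of $A$. The key bridge is Proposition \ref{prop: inc_rows}, which tells us that left-to-right column order in $D$ respects $<_P$ under the appropriate row conditions; the (3+1)-free hypothesis then prevents any ``skew'' incomparabilities from forcing additional rightward shifts that would break the column matchings between $a_{p+i}$ and $b_{k+i}$.
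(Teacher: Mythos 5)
Your overall strategy---identify $x=b_k$ as the rightmost unpaired row $r{+}1$ cell, then show (a) every $b_j$ with $j>k$ acquires a column-mate in the $r$ alignment and (b) $b_k$ does not---is the same shape as the paper's argument, but two of your supporting claims do not hold and they carry the real content of the proof. First, it is false that every cell of $D$ in rows $r$ and $r{+}1$ strictly right of $\plabel{D}{T}(b_k)$ is $r$-paired: since an $r$-pair requires the row $r$ cell to lie weakly \emph{left} of the row $r{+}1$ cell, a row $r$ cell to the right of the rightmost unpaired row $r{+}1$ cell can itself be unpaired (e.g.\ row $r$ occupying columns $2,3$ and row $r{+}1$ occupying only column $1$). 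Consequently the ``even split'' $p=m-(n-k)$ and the balanced-pair matching $a_{p+i}\leftrightarrow b_{k+i}$ are wrong; the paper only gets the inequality $n-p\le m-q+1$, where $q$ is the index of the first $a$ whose cell lies strictly right of $\plabel{D}{T}(b_k)$, and concludes that $b_{p+1},\ldots,b_n$ land in a \emph{subset} of the columns occupied by $a_q,\ldots,a_m$.

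Second, the assertion that ``no rightward shift in the construction of the alignment disturbs these coincidences'' dismisses exactly the delicate part. Some row $r$ entries genuinely do shift right during the alignment construction---namely the $a_q,\ldots,a_{q+\ell-1}$ whose cells sit in columns weakly left of $\plabel{D}{T}(b_k)$ but which must end up in columns $p+1,\ldots,p+\ell$, to the right of $b_p$. Pinning down where they land requires showing $a_{q+i}\not<_P b_{p+i+1}$ (via Lemma \ref{lem: no_flip} and the $(3{+}1)$-free condition) and then invoking property (4) of weak $r$ alignments. Likewise your part (b) is stated as a goal rather than proved: the needed fact is that every $a_i$ whose cell lies strictly left of $\plabel{D}{T}(b_k)$ satisfies $a_i<_P b_p$, which the paper extracts from Lemma \ref{lem: max_descent}, the minimality of $r$ (forcing $r=1$ in the problematic case), Proposition \ref{prop: col_pairs}, and Lemma \ref{lem: nw}; none of this machinery appears in your sketch. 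As written, the proposal identifies the right target but does not yet contain a proof.
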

\begin{proof}
	Let $a_1<_P\cdots <_P a_m$ and $b_1<_P\cdots <_P b_n$ be the entries of rows $r$ and $r+1$ in $\arrayify(\plabel{D}{T})$ respectively. Let $p$ be maximal such that $\plabel{D}{T}(b_p)$ is not $r$-paired. Say $\plabel{D}{T}(b_p)=(r+1,c)$.
	
	Suppose we have some $a_k$ with $k$ maximal such that $\plabel{D}{T}(a_k)$ is in a column strictly left of column $c$. Say $\plabel{D}{T}(a_k)=(r,c')$ and assume $a_k\not<_P b_p$. By Lemma \ref{lem: max_descent}, there can be no sequence of cells $(r_1,c_1),(r_2,c_2),(r_3,c_3)\in D$ with each $r_i<r_{i+1}$ and $c_i\le c_{i+1}$. Thus, $\plabel{D}{T}(a_k)$ cannot be $(r-1)$-paired without forming such a sequence together with the $(r-1)$-paired cell and $(r+1,c)$. By choice of $r$, we must then have $r=1$. Since $a_k\not<_P b_p$ we can say that column $c'+1$ of the 1 alignment of $T$ contains two entries using Lemma \ref{lem:col_ordering}. Then in $D$, we must have $(1,c'+1)$ column $c'$-paired with $\plabel{D}{T}(a_k)=(1,c')$ contradicting either the choice of $a_k$, or the fact that $\plabel{D}{T}(b_p)=(2,c)$ is not 1-paired. Then we must have $a_k<_P b_p$.
	
	Since $\plabel{D}{T}(b_p)$ is not 1-paired, each $\plabel{D}{T}(a_i)$ with $i\le k$ must be 1-paired to some $\plabel{D}{T}(b_j)$ with $j<p$. Any $\plabel{D}{T}(a_{k-i})$ must then be in a column weakly left of $\plabel{D}{T}(b_{p-i-1})$ for $i\ge 0$. Then $a_{k-i}\not>_P b_{p-i-1}$ by Lemma \ref{lem: nw}. In particular, this puts $a_k$ in a column strictly left of $b_p$ in the 1 pre-alignment of $\arrayify(\plabel{D}{T})$. In the 1 alignment of $\arrayify(\plabel{D}{T})$, $a_k$ remains in a column strictly left of $b_p$ since $a_k<_P b_p$.
	
	We must determine the positions of the remaining $a_i$ in the $r$ alignment of $\arrayify(\plabel{D}{T})$ (no longer assuming $r=1$ since $a_k$ as defined above may not exist). Let $q\ge 1$ be minimal such that either $a_q$ does not exist, or $\plabel{D}{T}(a_q)$ is in a column strictly right of $\plabel{D}{T}(b_p)$. Note that $n-p\le m-q+1$ since any $\plabel{D}{T}(b_i)$ with $i>p$ is $r$-paired with some $\plabel{D}{T}(a_j)$ with $j\ge q$ by choice of $p$. This also means each $\plabel{D}{T}(b_{p+i})$ with $i\ge 1$ is in a column weakly right of $\plabel{D}{T}(a_{q+i-1})$.
	
	Suppose $b_{p+k}\not<_P a_{q+k}$ for some $k\ge 0$, and $\plabel{D}{T}(b_{p+k})$ is in a column strictly left of $\plabel{D}{T}(a_{q+k})$. If $k>0$, further assume $\plabel{D}{T}(b_{p+k})$ shares a column with $\plabel{D}{T}(a_{q+k-1})$. We claim $b_{p+k+1}$ exists and $\plabel{D}{T}(b_{p+k+1})$ shares a column with $\plabel{D}{T}(a_{q+k})$. If $k=0$ so that $\plabel{D}{T}(b_p)$ does not share a column with any cell in row $r$, this fact is necessitated by Proposition \ref{prop: inc_rows} and our remark that $\plabel{D}{T}(b_{p+k+1})$ is in a column weakly right of $\plabel{D}{T}(a_{q+k})$.
	
	When $k>0$ we have $\plabel{D}{T}(a_{q+k-1})$ and $\plabel{D}{T}(b_{p+k})$ sharing a column. The former cannot be $(r-1)$-paired without violating Lemma \ref{lem: max_descent}, so we must have $r=1$ by choice of $r$. Since $b_{p+k}\not<_P a_{q+k}$, Lemma \ref{lem:col_ordering} implies $\plabel{D}{T}(a_{q+k})$ shares a column with another cell. This cell can only be in row 2, and therefore corresponds to $b_{p+k+1}$, again by Lemma \ref{lem: max_descent}.
	
	Therefore, there is some $\ell\ge 0$ such that (perhaps vacuously)
	\begin{enumerate}[(a)]
		\item $b_{p+i}\not<_P a_{q+i}$ for all $0\le i<\ell$,
		\item $\plabel{D}{T}(b_{p+i})$ and $\plabel{D}{T}(a_{q+i-1})$ share a column for all $1\le i<\ell$, and
		\item if $a_{q+\ell}$ exists then $b_{p+\ell}<_P a_{q+\ell}$.
	\end{enumerate}
	Additionally, if $\ell>0$ we know $r=1$.
	
	Recall that each $\plabel{D}{T}(b_{p+i})$ with $i\ge 1$ is weakly right of the column containing $\plabel{D}{T}(a_{q+i-1})$, and therefore $b_{p+i}\not<_P a_{q+i-1}$ by Lemma \ref{lem: nw}. We can now say that in the $r$ pre-alignment of $\arrayify(\plabel{D}{T})$, the entries $b_1,\ldots,b_{p+\ell},a_{q+\ell},\ldots, a_m$ lie in consecutive columns. If $a_{q+\ell}$ actually exists, then $a_m$ lies in the rightmost occupied column $p-q+m+1\ge n$. If $a_{q+\ell}$ does not exist, then $b_n$ lies in the rightmost occupied column. In either case, the entries $b_1,\ldots,b_{p+\ell},a_{q+\ell},\ldots, a_m$ will remain in consecutive column in the $r$ alignment of $\arrayify(\plabel{D}{T})$. Moreover, the entries $b_{p+\ell+1},\ldots, b_n$ will be among the columns containing $a_{q+\ell},\ldots, a_m$. 
	
	To complete the proof, it remains to show that $a_q,\ldots, a_{q+\ell-1}$ occupy columns $p+1,\ldots, p+\ell$ in the $r$ alignment. This relies, mainly, on showing that $a_{q+i}\not<_P b_{q+i+1}$ for each $0\le i<\ell$. This is all vacuous when $\ell=0$ so assume $\ell>0$ hence $r=1$.
	
	Assume to the contrary that $a_{q+i}<_P b_{q+i+1}$ for some $0\le i<\ell$. Lemma \ref{lem: no_flip} asserts that $a_{q+i}$ is in row 1, and $b_{q+i+1}$ in row 2 of $T$. In fact, $a_q,\ldots, a_{q+i}$ must all reside in row 1 of $T$, and $b_{p},\ldots, b_{q+i+1}$ in row 2 since $b_{p+j}\not<_P a_{q+j}$ for each $0\le j\le i$. There must then be some $y\in P$ in row 1 of $T$ such that $\plabel{D}{T}(y)$ shares a column with $\plabel{D}{T}(b_p)=(2,c)$. If we have some $x<_P y<_P a_q$, then since $b_p\not<_P a_q$, the (3+1)-free condition gives $x<_P b_p$. By Lemma \ref{lem: no_flip}, we then have $\plabel{D}{T}(y)$ in a row above $\plabel{D}{T}(b_p)$, which is to say row 1. This contradicts the choice of $p$ such that $\plabel{D}{T}(b_p)$ is not $r$-paired.
	
	Now $a_{q+\ell-1}\not<_P b_{p+\ell}$ and we have seen $b_{p+\ell}$ lies in column $p+\ell$ of the $r$ alignment of $\arrayify(\plabel{D}{T})$. We also saw that column $p+\ell+1$ is either empty or contains $a_{q+\ell}$. Property (4) of weak $r$ alignments therefore requires $a_{q+\ell-1}$ to be in column $p+\ell$. If $0<i<\ell$ and $a_{q+i}$ lies in column $p+i+1$, then by Property (4) again we must have $a_{q+i-1}$ in column $p+i$, as $a_{q+i-1}$ is not smaller than $b_{p+i}$ in column $p+i$.
	
	To summarize, the entries $a_{q},\ldots, a_m$ lie in columns $p+1,\ldots, p+m-q+1$ of the $r$ alignment of $\arrayify(\plabel{D}{T})$, while $b_{p+1},\ldots, b_n$ lie in a subset of those same columns. Since each $a_i$ with $i<q$ lies in a column strictly left of $b_p$ in the $r$ alignment, $b_p$ is the rightmost entry in row $r+1$ of the $r$ alignment of $\arrayify(\plabel{D}{T})$ that does not share a column with an entry in row $r$.
\end{proof}

\begin{proposition}\label{prop: raise_order}
	Let $T$ be a two-row $P$-tableau and let $D\in\mathcal D(T)\setminus\{\D(T)\}$. Let $r\ge 1$ be minimal such that $e_r(D)\ne 0$. Then $\arrayify(\plabel{e_r(D)}{T})=e_r(\arrayify(\plabel{D}{T}))$.
\end{proposition}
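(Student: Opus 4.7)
The plan is to compare the fillings $\plabel{D}{T}$ and $\plabel{e_r(D)}{T}$ column-by-column in the $1$ alignment of $T$, and verify that they differ exactly by the row swaps performed by the $P$-array operator $e_r$ on $A := \arrayify(\plabel{D}{T})$.

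First, invoke Lemma~\ref{lem: dfa_align} to identify the element whose cell is moved: if $\plabel{D}{T}(x) = (r+1,c)$ is the rightmost $r$-unpaired cell, then $x$ is exactly the element $b_p$ used in the definition of the $P$-array operator $e_r$ (the rightmost entry in row $r+1$ of the $r$ alignment of $A$ not sharing a column with row $r$). Let $b_p, \ldots, b_{p+t}$ (row $r+1$ of $A$) and $a_q, \ldots, a_{q+t-1}$ (row $r$ of $A$) be the entries whose rows get swapped by $e_r$ on $A$, as in Figure~\ref{fig:raise}. It suffices to show that $\plabel{e_r(D)}{T}$ agrees with $\plabel{D}{T}$ on every element outside this swapped set, and that the swapped elements have their rows exchanged as prescribed.

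I would prove this by induction on column index in the $1$ alignment of $T$, letting $c$ be the column containing $b_p$. For columns strictly left of $c$, the diagrams $D$ and $e_r(D)$ coincide and the filling rules depend only on previously processed columns, so the fillings agree there by induction. At column $c$, the cell formerly at $(r+1,c)$ is now at $(r,c)$ in $e_r(D)$; one checks that $\plabel{e_r(D)}{T}$ places $b_p$ at this new cell, reusing the analysis from the proof of Lemma~\ref{lem: dfa_align} that describes the neighbors of $b_p$ in the $1$ alignment of $T$ and in $D$. For columns strictly right of $c$, I would trace the cascade: each pair $(a_{q+i}, b_{p+i+1})$ with $0 \le i < t$ shares a column in the $r$ alignment of $A$ and is incomparable by Lemma~\ref{lem:flip_raise}, which forces rules~(1) and~(2) in Definition~\ref{def: filling} to swap their row assignments when we pass from $\plabel{D}{T}$ to $\plabel{e_r(D)}{T}$. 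After column $c+t$ the cascade terminates: by the choice of $t$ (no $a_j >_P b_{p+t}$ one column right of $b_{p+t}$ in the $r$ alignment), the conditions for rule~(1) or~(2) are met identically in both fillings, so they agree on the remaining columns.

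The main obstacle is the interplay between the columns of the $1$ alignment of $T$ (which govern the filling) and the columns of the $r$ alignment of $A$ (which govern the $P$-array $e_r$): in general these two alignments produce different column indices for the same element. The heart of the proof is therefore to show that the $1$ alignment columns where the filling changes are exactly those occupied by $b_p, \ldots, b_{p+t}, a_q, \ldots, a_{q+t-1}$, and that the cascade of applications of rules~(1) and~(2) propagates the row swaps in precisely the configuration of Figure~\ref{fig:raise}. Lemma~\ref{lem: no_flip} and Lemma~\ref{lem: nw} should be useful for controlling which element is the ``greater'' one in each local rule application, and Proposition~\ref{prop: col_pairs} ensures that the cell structure needed for rules~(1)--(2) is present. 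Once the row of every element agrees, the equality of $P$-arrays $\arrayify(\plabel{e_r(D)}{T}) = e_r(\arrayify(\plabel{D}{T}))$ follows.
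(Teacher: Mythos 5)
Your outline matches the paper's overall strategy: compare $\plabel{D}{T}$ and $\plabel{e_r(D)}{T}$ column by column, note they agree strictly left of $c$, show $b_p$ moves from $(r+1,c)$ to $(r,c)$, and trace a cascade rightward. But two of your steps are asserted exactly where the real work lies. First, you claim the cascade propagates because each pair $(a_{q+i},b_{p+i+1})$ is incomparable (Lemma~\ref{lem:flip_raise}) and so ``rules (1) and (2) swap their row assignments.'' The filling rules in Definition~\ref{def: filling} do not respond to incomparability within a column; they compare the column's entries to the entry in the topmost or lowest cell of the \emph{previous} column of $D$, and which rule fires can differ between $D$ and $e_r(D)$. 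The paper forces the swap by a different mechanism: once $b_{p+k}$ is known to occupy row $r$ of column $c+k$ in $\plabel{e_r(D)}{T}$ and $b_{p+k}\not<_P a_{q+k}$, Proposition~\ref{prop: inc_rows} (rows of a filling increase) leaves only one admissible assignment in column $c+k+1$. Without some such step your cascade does not propagate. Second, your termination claim --- that after column $c+t$ ``the conditions for rule (1) or (2) are met identically in both fillings'' --- is precisely the obstacle you name yourself: $t$ is defined via the $r$ alignment of $A$, while the rules are governed by the columns of $D$ and the $1$ alignment of $T$, and nothing in your proposal supplies the translation. (That translation exists, but only inside the \emph{proof} of Lemma~\ref{lem: dfa_align}, not in its statement, which concerns only $b_p$.)

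The paper sidesteps the need to prove that the filling cascade has length exactly $t$. It lets the cascade run for whatever length $k$ it naturally has, shows the swapped set $\{b_p,\ldots,b_{p+k}\}\cup\{a_q,\ldots,a_{q+k-1}\}$ is a \emph{single connected component} of the incomparability graph restricted to rows $r$ and $r+1$ (using Lemma~\ref{lem: no_flip} to rule out $b_{p+i}>_P a_{q+i}$), and then invokes Proposition~\ref{prop: component} together with Lemma~\ref{lem: dfa_align}: both operations swap the connected component containing $b_p$, hence they agree. Incorporating this connected-component argument would let you drop the direct matching with the configuration of Fig.~\ref{fig:raise} entirely and would close the gap in your termination step.
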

\begin{proof}
	Let $a_1<_P\cdots <_P a_m$ and $b_1<_P\cdots <_P b_n$ be the entries of rows $r$ and $r+1$ in $\arrayify(\plabel{D}{T})$ respectively. Let $p$ be maximal such that the cell corresponding to $b_p$ in $\plabel{D}{T}$ is not $r$-paired. Let $G$ be the induced subgraph of $\inc(P)$ on the vertices in $a_1,\ldots,a_m$ and $b_1,\ldots, b_n$. By Lemma \ref{lem: dfa_align} and Proposition \ref{prop: component}, we obtain $e_r(\arrayify(\plabel{D}{T}))$ from $\arrayify(\plabel{D}{T})$ by taking the vertices in the connected component of $G$ containing $b_p$, and swapping them between rows $r$ and $r+1$. We must show the same description holds for $\arrayify(\plabel{e_r(D)}{T})$.
	
	Say $\plabel{D}{T}(b_p)=(r+1,c)$. Notice that $\plabel{D}{T}$ and $\plabel{e_r(D)}{T}$ agree in all columns strictly left of $c$, as the diagram fillings are defined column by column from left to right, and $D$ and $e_r(D)$ coincide in those columns.
	
	Next we consider how column $c$ differs in $\plabel{D}{T}$ and $\plabel{e_r(D)}{T}$. We claim $\plabel{e_r(D)}{T}(b_p)=(r,c)$. This is clear if column $c$ of the $1$ alignment of $T$ contains a single entry. Otherwise, column $c$ of both diagram fillings will be defined according to the same rule (1), (2), or (3) in the definition. We see that $(r+1,c)$ is the topmost cell in column $c$ of $D$ if and only if $(r,c)$ is the topmost cell in column $c$ of $e_r(D)$. So if column $c$ is defined by rule (3) we indeed get $\plabel{e_r(D)}{T}(b_p)=(r,c)$. 
	
	If $(r,c-1)\notin D$, then for any $(s,c-1)\in D$, $(r+1,c)$ is the lowest cell in column $c$ weakly above row $s$ in $D$ if and only if $(r,c)$ is the lowest cell in column $c$ weakly above row $s$ in $e_r(D)$. Similarly, if there is $(r',c)\in D$ distinct from $(r+1,c)$, then it is the lowest cell in column $c$ weakly above row $s$ in $D$ if and only if this is also true in $e_r(D)$. Therefore if column $c$ is defined according to rules (1) or (2), we will again get that $\plabel{e_r(D)}{T}(b_p)=(r,c)$.
	
	If instead $(r,c-1)\in D$, then also $(r+1,c-1)\in D$ as $(r+1,c)$ is not $r$-paired. In this case we must have $r=1$ by choice of $r$ because $(r,c-1)$ cannot be $(r-1)$-paired without violating Lemma \ref{lem: max_descent}. It would also violate Lemma \ref{lem: max_descent} to have a cell in row 3 or greater of column $c$ in $D$, so we are back in the case where column $c$ of the diagram fillings contains only the entry $b_p$.
	
	Let $q\ge 1$ be minimal such that either $a_q$ does not exist, or lies in a column strictly right of $b_p$ in the 1 alignment of $T$. Suppose for some $k\ge 0$ and all $0\le i\le k$ we have
	\begin{enumerate}[(a)]
		\item $\plabel{D}{T}(b_{p+i})=(r+1,c+i)$ and $\plabel{e_r(D)}{T}(b_{p+i})=(r,c+i)$, and
		\item if $i>0$ then $\plabel{D}{T}(a_{q+i-1})=(r,c+i)$ and $\plabel{e_r(D)}{T}(a_{q+i-1})=(r+1,c+i)$.
	\end{enumerate}
	We will show that if $a_{q+k}$ exists with $\plabel{D}{T}(a_{q+k})=(r,c+k+1)$, and $b_{p+k}\not<_P a_{q+k}$ then the same hypotheses apply for $k+1$, and otherwise that column $c+k+1$ is identical in $\plabel{D}{T}$ and $\plabel{e_r(D)}{T}$.
	
	First assume $\plabel{D}{T}(a_{q+k})=(r,c+k+1)$ and $b_{p+k}\not<_P a_{q+k}$. The latter assumption with Proposition \ref{prop: inc_rows} implies either $(r,c+k)\in D$ or $(r+1,c+k+1)\in D$. In either case, we have cells in rows $r$ and $r+1$ of the same column, so the cell in row $r$ cannot be $(r-1)$-paired without violating Lemma \ref{lem: max_descent}. This means $r=1$ by choice of $r$.
	
	If we are in the case $(1,c+k)=(r,c+k)\in D$ then $k>0$ hence $\plabel{D}{T}(a_{q+k-1})=(1,c+i)$. Since $b_{p+k}\not<_P a_{q+k}$, $\plabel{D}{T}(a_{q+k})=(1,c+k+1)$ cannot be the only cell in its column using Lemma \ref{lem:col_ordering}. A cell in row 3 or greater of column $c+i+1$ would again contradict Lemma \ref{lem: max_descent} together with $(1,c+k)$ and $(2,c+k)$. Therefore we have $(r+1,c+k+1)=(2,c+k+1)\in D$ anyway. It must be the case that $\plabel{D}{T}(b_{p+k+1})=(2,c+k+1)$.
	
	Now $\plabel{e_r(D)}{T}$ must bijectively assign $a_{q+k}$ and $b_{p+k+1}$ to the cells $(1,c+k+1)$ and $(2,c+k+1)$. We have $\plabel{e_r(D)}{T}(b_{p+k})=(1,c+k)$, and $b_{p+k}\not<_P a_{q+k}$. For $\plabel{e_r(D)}{T}$ to have increasing rows as per Proposition \ref{prop: inc_rows}, the only choice is $\plabel{e_r(D)}{T}(a_{q+k})=(2,c+k+1)$ and $\plabel{e_r(D)}{T}(b_{p+k+1})=(1,c+k+1)$. Then (a) and (b) hold true for $k+1$ as claimed.
	
	Assume $a_{q+k}$ does not exist or $\plabel{D}{T}(a_{q+k})$ is not in column $c+k+1$. Equivalently, $(r,c+k+1)\notin D$. We will show $\plabel{D}{T}$ and $\plabel{e_r(D)}{T}$ coincide in column $c+k+1$. If $b_{p+k+1}$ exists then by choice of $p$ the cells of $b_{p+1},\ldots, b_{p+k+1}$ are $r$-paired with a subset of the cells of $a_{q},\ldots, a_m$. In particular, $a_{q+k}$ would have to also exist and $\plabel{D}{T}(b_{p+k+1})$ lies in a column weakly right of $\plabel{D}{T}(a_{q+k})$, hence strictly right of column $c+k+1$. Therefore, we can see that $(r+1,c+k+1)\notin D$. If $k>0$ then we have seen $r=1$ and $(1,c+k),(2,c+k)\in D$. Using Lemma \ref{lem: max_descent}, there can be no cell in column $c+k+1$ of $D$ in row 3 or greater. Thus column $c+k+1$ of $D$ is vacant, and the two diagram fillings trivially agree in this column.
	
	Suppose instead $k=0$. Any $(s,c+1)\in D$ is the lowest cell in its column weakly above the row of $\plabel{D}{T}(b_p)=(r+1,c)$ in $D$ if and only if it is the lowest cell in its column weakly above the row of $\plabel{e_r(D)}{T}(b_p)=(r,c)$ in $e_r(D)$. Similarly if there is $(r',c)\in D$ distinct from $(r+1,c)$ then $(s,c+1)$ is the lowest cell in its column weakly above $r'$ in $D$ if and only if this is also the case in $e_r(D)$. It follows that $\plabel{D}{T}$ and $\plabel{e_r(D)}{T}$ will be defined the same way in column $c+1$.
	
	Finally, assume $\plabel{D}{T}(a_{q+k})=(r,c+k+1)$, but $b_{p+k}<_P a_{q+k}$. We will once again show the two diagram fillings agree in column $c+k+1$. If the column contains only one cell then this is immediate so we will assume it contains two. We have several cases.
	\begin{itemize}
		\item \textbf{Case: } $k=0$ and $(r+1,c+1)\notin D$\\
		Any $(s,c+1)\in D$ is the lowest cell in its column weakly above the row of $\plabel{D}{T}(b_p)=(r+1,c)$ in $D$ if and only if it is the lowest cell in its column weakly above the row of $\plabel{e_r(D)}{T}(b_p)=(r,c)$ in $e_r(D)$. If there is $(r',c)\in D$ distinct from $(r+1,c)$ then $(s,c+1)$ is similarly the lowest cell in its column below row $r'$ in $D$ if and only if the same is true in $e_r(D)$. Column $c+1$ will therefore be defined identically in $\plabel{D}{T}$ and $\plabel{e_r(D)}{T}$.
		
		\item \textbf{Case: } $k=0$ and $(r+1,c+1)\in D$\\
		If column $c+1$ is defined in $\plabel{D}{T}$ and $\plabel{e_r(D)}{T}$ by rule (3) we are done, so assume not.
		
		We must have $b_p$ less than the entry corresponding to $(r+1,c+1)$ in $D$, hence every entry in column $c+1$. If $\plabel{D}{T}(b_p)$ is the topmost cell in its column, then the conditions for defining column $c+1$ of $\plabel{D}{T}$ or $\plabel{e_r(D)}{T}$ according to rule (1) are not satisfied. If $\plabel{D}{T}(b_p)$ is the lowest cell in its column, then the conditions for defining column $c+1$ of $\plabel{D}{T}$ or $\plabel{e_r(D)}{T}$ according to rule (2) are not satisfied. Thus, there must be $(r',c)\in D$ distinct from $(r+1,c)$, and column $c+1$ of both diagram fillings is determined by mapping the unique entry in column $c+1$ of the 1 alignment of $T$ to the lowest cell in column $c+1$ weakly above row $r'$.
		
		\item \textbf{Case: } $k>0$\\
		We have seen that we must have $r=1$, and we know $(1,c+k),(2,c+k)\in D\cap e_r(D)$. Using Lemma \ref{lem: max_descent}, the two cells in column $c+k+1$ of $D\cap e_r(D)$ can only be $(1,c+k+1)$ and $(2,c+k+1)$. Thus $\plabel{D}{T}(b_{p+k+1})=(2,c+k+1)$. We know $b_{p+k-1}\not<_P a_{q+k}$ so the $(3+1)$-free condition implies $a_{q+k}<_P b_{p+k+1}$. Thus, every entry in column $c+k+1$ of the 1 alignment of $T$ is greater than every entry in column $c+k$. Then column $c+k+1$ of $\plabel{D}{T}$ and $\plabel{e_r(D)}{T}$ are defined according to rule (3).
	\end{itemize}
	
	Now we fix $k\ge 0$ to be maximal such that for all $0\le i<k$ hypotheses (a) and (b) hold. We have summarily shown that $\arrayify(\plabel{e_r(D)}{T})$ is obtained from $\arrayify(\plabel{D}{T})$ by putting $a_q,\ldots, a_{q+k-1}$ in row $r+1$ and putting $b_p,\ldots, b_{p+k}$ in row $r$. Together, these entries form a union $C$ of connected components of $G$, since $\arrayify(\plabel{e_r(D)}{T})$ is, in fact, a $P$-array. 
	
	To finish, we must show that $C$ is actually a single connected component of $G$, i.e. the connected component containing $b_p$. Assume $k>0$ hence $r=1$, else this is trivial. For every $0\le i<k$ we have $\plabel{D}{T}(a_{q+i})$ above $\plabel{D}{T}(b_{p+i+1})$ in the same column and $\plabel{e_r(D)}{T}(a_{q+i})$ below $\plabel{e_r(D)}{T}(b_{p+i+1})$ in the same column. Then we must have $a_{q+i}\parallel b_{p+i+1}$ by Lemma \ref{lem: no_flip}. We also know $b_{p+i}\not<_P a_{q+i}$ for each $0\le i<k$, and it now suffices to show $b_{p+i}\not>_P a_{q+i}$. If we had $b_{p+i}>_P a_{q+i}>_P a_{q+i-1}$ for $i>0$, this would contradict that $a_{q+i-1}\parallel b_{p+i}$ as just shown.
	
	Then assume $b_p>_P a_q$. If $b_p$ is the only entry in its column of the 1 alignment of $T$, then it is in row 1 of $T$ and $a_q$ in row 2. This is impossible by property (3) of weak $1$ alignments. Then there must be some $x\in P$ with $\plabel{D}{T}(x)=(s,c)$ and $s>2$. Because $b_p$ and $a_q$ cannot share a row in $T$, $x$ and $a_q$ do share a row. Thus $x<_P a_q<_P b_p$ but this contradicts Lemma \ref{lem: no_flip}. We conclude $C$ is a connected component of $G$ and we are done.
\end{proof}

\begin{theorem}\label{thm: schur}
	The map 
	\[
	\varphi: \bigsqcup_{T \text{ two-row $P$-tableau}} \{ (T,D) \mid D\in\mathcal{D}(T) \}\to \dfa{P}
	\]
	given by $(T,D)\mapsto \arrayify(\plabel{D}{T})$ is a bijection, and $\wt(D)=\wt(\varphi(T,D))$. Additionally, $\dfa{P,T}$ is a connected subset of $\parrays{P}$ for each two-row $P$-tableau $T$.
\end{theorem}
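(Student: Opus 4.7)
The weight identity $\wt(D) = \wt(\varphi(T,D))$ is immediate: since $\plabel{D}{T}$ is a bijection $P \to D$, for each $i$ the number of $x \in P$ with $\plabel{D}{T}(x)$ in row $i$ of $D$ equals the number of cells of $D$ in row $i$. Surjectivity of $\varphi$ is immediate from the definition of $\dfa{P}$.

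For connectedness of $\dfa{P,T}$, I would induct on the length of the shortest raising path from $D$ to $\D(T)$ in the diagram crystal component $\mathcal{D}(T)$. The base case $D = \D(T)$ uses Lemma \ref{lem: highest_filling} to give $\arrayify(\plabel{\D(T)}{T}) = T$, a trivially connected singleton. For the inductive step, when $D \ne \D(T)$, let $r$ be minimal with $e_r(D) \ne 0$; Proposition \ref{prop: raise_order} gives $e_r(\arrayify(\plabel{D}{T})) = \arrayify(\plabel{e_r(D)}{T})$. By induction, the right-hand side lies in a connected subset of $\dfa{P,T}$ containing $T$, and hence so does $\arrayify(\plabel{D}{T}) = f_r(\arrayify(\plabel{e_r(D)}{T}))$, by Theorem \ref{thm: inverse}.

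For injectivity, I would proceed by induction on diagram-crystal depth. In the base case, suppose $A = \arrayify(\plabel{D}{T})$ is a $P$-tableau; then $e_r(A) = 0$ for all $r$ by Theorem \ref{thm: hwt}, which via Proposition \ref{prop: raise_order} forces $D = \D(T)$, whence $A = T$ by Lemma \ref{lem: highest_filling}. In particular, $T$ is the unique $P$-tableau in $\dfa{P,T}$, and $A$ canonically recovers the pair $(A, \D(A))$. For the inductive step, suppose $\varphi(T,D) = \varphi(T',D') = A$ with $A$ not a $P$-tableau. Let $r_0$ be minimal with $e_{r_0}(A) \ne 0$ in the $P$-array crystal. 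I would then argue $r_0$ is simultaneously the minimal $r$ with $e_r(D) \ne 0$ in the diagram crystal for $D$ \emph{and} for $D'$, so that Proposition \ref{prop: raise_order} yields $e_{r_0}(A) = \arrayify(\plabel{e_{r_0}(D)}{T}) = \arrayify(\plabel{e_{r_0}(D')}{T'})$; since $e_{r_0}(A)$ has strictly smaller depth, induction gives $(T, e_{r_0}(D)) = (T', e_{r_0}(D'))$, and applying $f_{r_0}$ recovers $(T,D) = (T',D')$.

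The main obstacle is the compatibility of minimal raising indices between the two crystals: Proposition \ref{prop: raise_order} supplies that the $P$-array raising $e_r(A)$ is nonzero whenever the diagram raising $e_r(D)$ is, but the converse—that $e_r(D) = 0$ forces $e_r(\arrayify(\plabel{D}{T})) = 0$—must be proved to ensure $r_0$ matches the diagram-crystal minimum. I expect this to follow from a careful case analysis of Definition \ref{def: filling}, showing that the highest-weight condition at level $r$ in $\mathcal{D}(T)$ (each cell in row $r+1$ of $D$ being $r$-paired) is transported faithfully by the filling into the $P$-tableau condition on rows $r$ and $r+1$ of $\arrayify(\plabel{D}{T})$, possibly by extending Proposition \ref{prop: raise_order} beyond the minimal-$r$ regime. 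This verification is the most delicate step of the argument.
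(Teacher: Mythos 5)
Your plan follows the same skeleton as the paper's proof: surjectivity and weight preservation are immediate, connectedness of $\dfa{P,T}$ comes from iterating Proposition \ref{prop: raise_order} along the minimal raising index until $\D(T)$ (hence $T$, by Lemma \ref{lem: highest_filling}) is reached, and injectivity hinges on the compatibility of minimal raising indices between the diagram crystal and the $P$-array crystal. You have correctly isolated the one step you leave unproved --- that $e_s(D)=0$ for all $s<r$ forces $e_s(\arrayify(\plabel{D}{T}))=0$ --- and this is indeed the only substantive missing piece, so until it is supplied your argument has a genuine (if small) gap. The paper closes it without any case analysis of Definition \ref{def: filling}: if $s<r$ then every cell in row $s+1$ of $D$ is $s$-paired, so writing $a_1<_P\cdots<_Pa_m$ and $b_1<_P\cdots<_Pb_n$ for the entries of rows $s$ and $s+1$ of $\arrayify(\plabel{D}{T})$, the cell of $a_i$ exists and lies weakly left of (and above) the cell of $b_i$; Lemma \ref{lem: nw} then gives $a_i\not>_Pb_i$, which is precisely the condition shown in the proof of Theorem \ref{thm: hwt} to be equivalent to $e_s(\arrayify(\plabel{D}{T}))=0$. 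It is a three-line argument, not the delicate verification you anticipate.

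The remaining structural differences are cosmetic. Where you run a joint induction on depth to recover both $T$ and $D$, the paper first observes that the minimal-index raising sequence carrying $A$ to a $P$-tableau is intrinsic to $A$ (so $T=T'$ follows at once), and then handles injectivity in $D$ for fixed $T$ by a direct observation: a cell $(r,c)\in D\setminus D'$ forces row $r$ of $\varphi(T,D)$ to contain an entry from column $c$ of the $1$ alignment of $T$ while row $r$ of $\varphi(T,D')$ contains none. Your inductive route also works once the index-compatibility step is in place (with $f_{r_0}\circ e_{r_0}=\mathrm{id}$ on the diagram crystal recovering $D=D'$), but the paper's version sidesteps any bookkeeping about whether $D$ and $D'$ sit at the same depth.
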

\begin{proof}
	We have that $\varphi$ is onto by definition of $\dfa{P}$, and it is also immediate that $\wt(D)=\wt(\varphi(T,D))$ whenever $D\in\mathcal{D}(T)$.
	
	Let $T$ be a two-row $P$-tableau and $D\in\mathcal{D}(T)$. If $D\ne\D(T)$ then we may take $r\ge 1$ to be minimal such that $e_r(D)\ne0$ and we get that $e_r(\varphi(T,D))=\varphi(T,e_r(D))$ by Proposition \ref{prop: raise_order}. Repeated application shows that some sequence of raising operations applied to $\varphi(T,D)$ gives us $\varphi(T,\D(T))=T$, by Lemma \ref{lem: highest_filling}, with the corresponding sequence of $P$-arrays being contained in $\dfa{P,T}$. Therefore $\dfa{P,T}$ is a connected subsets of the $P$-array crystal.
    
    Moreover, we claim that $r$ as chosen is minimal such that $e_r(\varphi(T,D))\ne 0$. To see this, let $1\le s<r$. Let $a_1<_P\cdots<_Pa_m$ and $b_1<_P\cdots<_Pb_n$ be the entries in rows $s$ and $s+1$ of $\varphi(T,D)$ respectively. By choice of $r$, the cell of each $b_i$ in $\plabel{D}{T}$ is $s$-paired. In particular, $a_i$ exists and must lie in a column weakly left of $b_i$. By Lemma \ref{lem: nw}, $a_i\not>_P b_i$ which confirms that $e_s(A)=0$.
    
    Therefore, the sequence of raising operations applied to $\varphi(T,D)$ to obtain $T$ depends only on $\varphi(T,D)$, not $D$ or $T$ per se. This is to say that if we have $\varphi(T,D)=\varphi(T',D')$ for some two-row $P$-tableau $T'$ and diagram $D'\in\mathcal{D}(T')$ then the same sequence of raising operations applied to $\varphi(T,D)$ yields $T'$, which thus coincides with $T$. %This shows that the union $\bigsqcup_T \dfa{P,T}$ over two-row $P$-tableaux is disjoint as desired.
    
    Finally, we must show that when $D$ and $D'$ are distinct diagrams in $\mathcal{D}(T)$ then $\varphi(T,D)\ne \varphi(T,D')$. We may take some $(r,c)\in D\setminus D'$ and we get that row $r$ of $\varphi(T,D)$ contains an entry from column $c$ of the 1 alignment of $T$, while row $r$ of $\varphi(T,D')$ contains no such entry. Therefore $\varphi$ is a bijection.
\end{proof}
\begin{corollary}
	For $T$ a two-row $P$-tableau we have \[s_{\wt(T)}=\sum_{A\in\dfa{P,T}} \prod_{i\ge 1} x_i^{\wt(A)_i}.\]
\end{corollary}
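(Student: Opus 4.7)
The plan is to assemble this corollary directly from the two main results preceding it, namely Theorem \ref{thm: schur} and Theorem \ref{thm: diagrams}. The construction $\dfa{P,T}$ has been rigged precisely so that this assembly is mechanical.

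First I would verify that the character of the diagram crystal component of $\D(T)$ equals $s_{\wt(T)}$. By Lemma \ref{lem: top_just}, every cell in row 2 of $\D(T)$ has a cell directly above it in row 1, so $\D(T)$ is a top-justified diagram. Theorem \ref{thm: diagrams} then applies and tells us that the character of the connected component of $\D(T)$ in the diagram crystal is $s_{\wt(\D(T))}$. Since $\D(T)$ is the image of the 1 alignment of $T$, which in turn has $\wt(T)_1$ entries in row 1 and $\wt(T)_2$ entries in row 2, we have $\wt(\D(T)) = \wt(T)$, giving the identity
\[
\sum_{D \in \mathcal{D}(T)} \prod_{i \ge 1} x_i^{\wt(D)_i} = s_{\wt(T)}.
\]

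Second I would transfer this identity from diagrams to $P$-arrays via the bijection $\varphi$ supplied by Theorem \ref{thm: schur}. Restricted to the component indexed by $T$, $\varphi$ is a weight-preserving bijection from $\mathcal{D}(T)$ onto $\dfa{P,T}$, so reindexing the sum above by $A = \varphi(T,D)$ yields
\[
s_{\wt(T)} = \sum_{A \in \dfa{P,T}} \prod_{i \ge 1} x_i^{\wt(A)_i},
\]
which is the claim.

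There is no genuine obstacle here; all the real work has been absorbed into Theorem \ref{thm: schur} (the existence of the weight-preserving bijection $\varphi$) and Theorem \ref{thm: diagrams} (the identification of diagram crystal components with Schur functions when the highest weight diagram is top-justified). The only thing worth stating carefully is the equality $\wt(\D(T)) = \wt(T)$, which is immediate from the definition of $\D(T)$ as the image of the 1 alignment of $T$.
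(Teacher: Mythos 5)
Your proposal is correct and follows essentially the same route as the paper: both deduce $s_{\wt(T)}=s_{\wt(\D(T))}$ from Lemma \ref{lem: top_just} and Theorem \ref{thm: diagrams}, then transfer the resulting sum over $\mathcal{D}(T)$ to $\dfa{P,T}$ via the weight-preserving bijection of Theorem \ref{thm: schur}. The only difference is that you spell out the top-justification and the equality $\wt(\D(T))=\wt(T)$ explicitly, which the paper leaves implicit.
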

\begin{proof}
	Using Theorem \ref{thm: diagrams} and Theorem \ref{thm: schur} we have
	\[
	s_{\wt(T)}=s_{\wt(\D(T))}=\sum_{D\in\mathcal{D}(T)}\prod_{i\ge 1} x_i^{\wt(D)_i}=\sum_{A\in\dfa{P,T}} \prod_{i\ge 1} x_i^{\wt(A)_i}.
	\]
\end{proof}
Recall from Remark \ref{rem: young} that $\mathcal{D}(T)$ is in explicit weight preserving bijection with semi-standard Young tableau. Thus, Theorem \ref{thm: schur} can be thought of as a partial Robinson-Schensted correspondence for $P$-arrays. It would be nice to extend the result to a full bijection, though such an extension is unlikely to be straightforward. For one thing, our partial bijection has the notable property that its image for a fixed $P$-tableau is a connected subset of $\parrays{P}$, but after removing this subset for the two-row $P$-tableau in Fig. \ref{fig:not_schur} the remaining maximal connected subsets are not individually $s$-positive.

\bibliographystyle{amsalpha}
\bibliography{Crystal}
\end{document}